\documentclass[a4paper]{amsart}
\usepackage{amsthm,amsmath,amssymb,mathrsfs}
\usepackage[latin1]{inputenc}
\usepackage{mathrsfs}
\usepackage{mathtools}
\usepackage{url}

\usepackage{tikz} 
\usetikzlibrary{matrix,arrows,calc,decorations.pathreplacing,fit}
\usepackage{tikz-cd}

\pagestyle{plain}

 \topmargin 0mm

 \setlength{\itemsep}{0pt}
 \setcounter{totalnumber}{3}
 \setcounter{topnumber}{1}
 \setcounter{bottomnumber}{3}
 \setcounter{secnumdepth}{3}
 
\numberwithin{equation}{section}

\newtheorem{theorem}[equation]{Theorem}
\newtheorem{lemma}[equation]{Lemma}
\newtheorem{lemdef}[equation]{Lemma/Definition}
\newtheorem{corollary}[equation]{Corollary}
\newtheorem{proposition}[equation]{Proposition}
\newtheorem{conjecture}[equation]{Conjecture}
\theoremstyle{definition}
\newtheorem{definition}[equation]{Definition}

\newtheorem{notation}[equation]{Notation}
\theoremstyle{remark}
\newtheorem{remark}[equation]{Remark}
\newtheorem{example}[equation]{Example}



\DeclareMathOperator{\Cent}{Cent}

\DeclareMathOperator{\cha}{char} 

\DeclareMathOperator{\defect}{def}

\DeclareMathOperator{\image}{im}

\DeclareMathOperator{\Ind}{c-ind}

\DeclareMathOperator{\Gal}{Gal}
\DeclareMathOperator{\GL}{GL}
\DeclareMathOperator{\GSp}{GSp}

\DeclareMathOperator{\Norm}{Norm}

\DeclareMathOperator{\PGL}{PGL}
\DeclareMathOperator{\pr}{pr}

\DeclareMathOperator{\Spec}{Spec}

\DeclareMathOperator{\Res}{Res}

\DeclareMathOperator{\val}{val}


\def \AA {\mathbb{A}}

\def \GG {\mathbb{G}}

\def \NN {\mathbb{N}}

\def \QQ {\mathbb{Q}}

\def \ZZ {\mathbb{Z}}

\def \Acal {\mathcal{A}}

\def \Ecal {\mathcal{E}}

\def \Ocal {\mathcal{O}}

\def \Dscr {\mathscr{D}}

\def \Mscr {\mathscr{M}}

\def \Bhat {\hat{B}}

\def \Ghat {\hat{G}}
\def \Hhat {\hat{H}}

\def \Qhat {\hat{Q}}

\def \Shat {\hat{S}}
\def \That {\hat{T}}

\def \fbar {\bar{f}}

\def \hbar {\bar{h}}

\def \sbar {\bar{s}}

\def \Htilde {\tilde{H}}

\def \Ttilde {\tilde{T}}

\def \Wtilde {\tilde{W}}

\def \btilde {\tilde{b}}

\def \wbf {\mathbf{w}}


\def \mono  {\hookrightarrow}

\def \epi   {\twoheadrightarrow}

\def \isom  {\stackrel{\sim}{\rightarrow}}

\def \bij   {\stackrel{1:1}{\rightarrow}}

\newcommand{\pot}[1]{ [\hspace{-0,17em}[ {#1} ]\hspace{-0,17em}] }
\newcommand{\rpot}[1]{ (\hspace{-0,23em}( {#1} )\hspace{-0,23em}) }

\newcommand{\bigslant}[2]{{\raisebox{.2em}{$#1$}\hspace{-.3em}\left/ \hspace{-.2em}\raisebox{-.2em}{$#2$}\right.}}

\newcommand{\restr}[2]{{#1}\raise-.5ex\hbox{\ensuremath|}_{#2}}

\def \dom {{\rm dom}}



\newcounter{subenvcounter}
\newenvironment{subenv}{%
 \begin{list}
  {\em (\arabic{subenvcounter})}
  {\setlength{\leftmargin}{20pt}
   \setlength{\rightmargin}{0pt}
   \setlength{\itemindent}{0pt}
   \setlength{\labelsep}{5pt}
   \setlength{\labelwidth}{13pt}
   \setlength{\listparindent}{\parindent}
   \setlength{\parsep}{0pt}
   \setlength{\itemsep}{0pt}
   \setlength{\topsep}{-\parskip}
   \usecounter{subenvcounter}}}
  {\end{list}}

\newcounter{asslistcounter}
\newenvironment{assertionlist}{
 \begin{list}
  {\upshape (\alph{asslistcounter})}
  {\setlength{\leftmargin}{18pt}
   \setlength{\rightmargin}{0pt}
   \setlength{\itemindent}{0pt}
   \setlength{\labelsep}{5pt}
   \setlength{\labelwidth}{13pt}
   \setlength{\listparindent}{\parindent}
   \setlength{\parsep}{0pt}
   \setlength{\itemsep}{0pt}
   \setlength{\topsep}{-.5\parskip}
   \usecounter{asslistcounter}}}
  {\end{list}}


\DeclareMathOperator{\type}{type}

\DeclareMathOperator{\cotype}{cotype}

\def \ad {\mathrm{ad}}

\def \dom {\mathrm{dom}}

\def \top {\mathrm{top}}
\def \pr {\mathrm{pr}}

\def \Y {Y}

\def \Grass {\mathcal{G}r}

\begin{document}

\begin{title}
{Irreducible components of minuscule affine Deligne-Lusztig varieties}
\end{title}
\author{Paul Hamacher and Eva Viehmann}
\address{Technische Universit\"at M\"unchen\\Fakult\"at f\"ur Mathematik - M11 \\ Boltzmannstr. 3\\85748 Garching bei M\"unchen\\Germany}
\email{hamacher@ma.tum.de, viehmann@ma.tum.de}
\date{}
\thanks{The authors were partially supported by ERC starting grant 277889 ``Moduli spaces of local $G$-shtukas''.}

 \setcounter{tocdepth}{1}

\begin{abstract}{We examine the set of $J_b(F)$-orbits in the set of irreducible components of affine Deligne-Lusztig varieties for a hyperspecial subgroup and minuscule coweight $\mu$. Our description implies in particular that its number of elements is bounded by the dimension of a suitable weight space in the Weyl module associated with $\mu$ of the dual group.}
\end{abstract}

\maketitle

 \tableofcontents

\section{Introduction}\label{intro}

Let $F$ be a finite extension of $\mathbb{Q}_p$ or $\mathbb{F}_p(\!(t)\!)$ and $\Gamma$ its absolute Galois group. We denote by $\Ocal_F$ and $k_F\cong \mathbb {F}_q$ its ring of integers and its residue field, and by $\epsilon$ a fixed uniformiser. Let $L$ denote the completion of the maximal unramified extension of $F$, and $\Ocal_{L}$ its ring of integers. Its residue field is an algebraic closure $k$ of $k_F$. We denote by $\sigma$ the Frobenius of $L$ over $F$ and of $k$ over $k_F$.

Let $G$ be a reductive group scheme over $\Ocal_F$ and denote $K =G(\Ocal_L)$. Then $G_F$ is automatically unramified. We fix $S \subset T \subset B \subset G$, where $S$ is a maximal split torus, $T$ a maximal torus, and $B$ a Borel subgroup of $G$. Let $W$ be the absolute Weyl group of $G$. There exist $k_F$-ind schemes called the loop group $LG$, the positive loop group $L^+G$ and the affine Grassmannian $\Grass_G \coloneqq LG/L^+G$ of $G$ whose $k$-valued points are canonically identified with $G(L)$, $K=G(\Ocal_L)$ and $G(L)/G(\Ocal_L)$, respectively (compare \cite{PR08} resp.~\cite{zhu} and \cite{BS}).

Let $\mu\in X_*(T)_{\dom}$ and let $b\in G(L)$. Then the affine Deligne-Lusztig variety associated with $b$ and $\mu$ is the reduced subscheme $X_{\mu}(b)$ of  $\Grass_G$  whose $k$-valued points are 
$$X_{\mu}(b)(k)=\{g\in G(L)/K \mid g^{-1}b\sigma(g)\in K\mu(\epsilon) K\}.$$
Let $X_{\preceq\mu}(b)=\bigcup_{\mu'\preceq \mu} X_{\mu'}(b)$ where $\mu'\preceq\mu$ if $\mu-\mu'$ is a non-negative integral linear combination of positive coroots. It is closed in the affine Grassmannian and called the closed affine Deligne-Lusztig variety. For minuscule $\mu$ (the case we are mainly interested in for this paper) it agrees with $X_{\mu}(b)$. 

Notice that up to isomorphism, both affine Deligne-Lusztig varieties depend only on the $G(L)$-$\sigma$-conjugacy class $[b]\in B(G)$ of $b$. An affine Deligne-Lusztig variety $X_{\mu}(b)$ or $X_{\preceq \mu}(b)$ is non-empty if and only if $[b]\in B(G,\mu)$, a finite subset of $B(G)$. The following basic assertion seems to be well-known, but we could not find a reference in the literature.

\begin{lemma}\label{lemfintype}
The scheme $X_{\mu}(b)$ is locally of finite type in the equal characteristic case and locally of perfectly finite type in the case of unequal characteristic.
\end{lemma}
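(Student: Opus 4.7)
The strategy is to exhibit $X_{\mu}(b)$ as a locally closed sub-ind-scheme of $\Grass_G$ whose intersection with each finite-type Schubert stratum is again of finite type; this immediately yields that $X_{\mu}(b)$ is locally of finite type (resp.\ perfectly locally of finite type). Both in equal and in unequal characteristic, $\Grass_G$ is exhausted by the closed Schubert varieties $\Grass_G^{\preceq \lambda}$ (see \cite{PR08} resp.\ \cite{zhu,BS}), each of which is projective of finite type (resp.\ the perfection thereof). The plan is to show, for every dominant cocharacter $\lambda$, that the subset $Y_{\lambda} \coloneqq X_{\mu}(b) \cap \Grass_G^{\preceq \lambda}$ inherits the structure of a locally closed subscheme of $\Grass_G^{\preceq \lambda}$.

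Fix $\lambda$ and set $S \coloneqq \Grass_G^{\preceq \lambda}$. Since the projection $\pi \colon LG \to \Grass_G$ is an $L^+G$-torsor for the \'etale topology, I choose an \'etale cover $U \to S$ equipped with a section $s \colon U \to LG$. As $S$ is (perfectly) of finite type, the image of $s$ lies in a bounded piece of $LG$; because multiplication, inversion, and $\sigma$ all send bounded pieces to bounded pieces, the assignment
$$\Phi \colon U \longrightarrow LG, \qquad u \longmapsto s(u)^{-1}\, b\, \sigma(s(u))$$
factors through some bounded sub-ind-scheme $L^{\preceq N}G \subseteq LG$ whose size $N$ depends only on $\lambda$ and $b$.

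Now the double coset $L^+G \cdot \mu(\epsilon) \cdot L^+G \subseteq LG$ is a locally closed sub-ind-scheme: it is an $L^+G$-torsor for the right action over the locally closed Schubert cell attached to $\mu$ in $\Grass_G$. Its intersection with the bounded piece $L^{\preceq N}G$ is therefore a locally closed subscheme, and the preimage $\Phi^{-1}(L^+G \cdot \mu(\epsilon) \cdot L^+G)$ is locally closed in $U$. Since the condition $s(u)^{-1} b\, \sigma(s(u)) \in L^+G \cdot \mu(\epsilon) \cdot L^+G$ depends only on the right $L^+G$-orbit of $s(u)$, i.e.\ only on $\pi(s(u)) \in S$, \'etale descent produces the desired locally closed subscheme structure on $Y_{\lambda} \subseteq S$, which is then automatically of the same type as $S$. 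The main technical point is the boundedness estimate ensuring that $\Phi$ lands in a fixed bounded piece of $LG$; once this is in hand, the argument proceeds identically in the equal and unequal characteristic cases, with finite type replaced by perfectly of finite type throughout the latter.
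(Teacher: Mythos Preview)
Your argument is correct and follows essentially the same route as the reference the paper cites: the proof in \cite[Thm.~6.3]{HV11} likewise intersects with the bounded Schubert varieties $\Grass_G^{\preceq\lambda}$, lifts to $LG$ via an \'etale-local section, and uses that $g\mapsto g^{-1}b\sigma(g)$ sends bounded pieces to bounded pieces in order to pull back the locally closed double coset $K\mu(\epsilon)K$. The paper simply points to that reference rather than spelling the steps out, but what you wrote is precisely the content of the cited argument.
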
 
\begin{proof}
The proof of this is the same as the corresponding part of the analogous statement for moduli spaces of local $G$-shtukas, compare the proof of Theorem 6.3 in \cite{HV11} (where only the first half of p. 113 of loc.~cit. is needed). In that proof, the case of equal characteristic and split $G$ is considered. However, the general statement follows from the same proof.
\end{proof}
Notice that in general $X_{\mu}(b)$ is not quasi-compact since it may have infinitely many irreducible components. It is conjectured to be equidimensional, but this has not been proven in full generality yet. In Section~\ref{secequidim} we give an overview about the cases where equidimensionality has been proven. In the case of $\mu$ minuscule, which we are primarily interested in here, there are only a few exceptional cases where this is not yet known.

\begin{definition}\label{def1}
For a finite-dimensional $k$-scheme $X$ we denote by $\Sigma(X)$ the set of irreducible components of $X$ and by $\Sigma^\top(X) \subset \Sigma(X)$ the subset of those irreducible components which are top-dimensional. 
\end{definition}

The affine Deligne-Lusztig varieties $X_{\mu}(b)$ and $X_{\preceq \mu}(b)$ carry a natural action (by left multiplication) by the group
$$J_b(F)=\{g\in G(L)\mid g^{-1}b\sigma(g)=b\}.$$
This action induces an action of $J_b(F)$ on the set of irreducible components.

A complete description of the set of orbits was previously only known for the groups $\GL_n$ and $\GSp_{2n}$ and minuscule $\mu$ where the action is transitive (\cite{modpdiv},\cite{polpdiv}), and for some other particular cases, see for example \cite{vollaardwedhorn} for a particular family of unitary groups and minuscule $\mu$.

To describe the (conjectured) number of orbits, denote by $\Ghat$ the dual group of $G$ in the sense of Deligne and Lusztig. That is, $\Ghat$ is the reductive group scheme over $\Ocal_F$ that contains a Borel subgroup $\Bhat$ with maximal torus $\That$ and maximal split torus $\Shat$ such that there exists an Galois equivariant isomorphism $X^\ast(\That) \cong X_\ast(T)$ identifying simple coroots of $\That$ with simple roots of $T$. For any $\mu \in X_\ast(T)_\dom = X^\ast(\That)_\dom$ we denote by $V_\mu$ the associated Weyl module of $\Ghat_{\Ocal_L}$.

In the following we use an element $\lambda_G(b) \in X^\ast(\That^\Gamma)$ that we define in Section \ref{secdeflambda}. Its  restriction $\lambda$ to $\Shat$ can be seen as a `best integral approximation' of the Newton point $\nu_b$ of $[b]$, while its precise value in $X^\ast(\That^\Gamma)$ will depend on the Kottwitz point $\kappa_G(b)$. We choose a lift $\tilde\lambda \in X_\ast(T)$.

\begin{conjecture}[Chen, Zhu] \label{conj main}
 There exists a canonical bijection between $J_b(F)\backslash \Sigma(X_{\mu}(b))$ and the basis of $V_\mu(\lambda_G(b))$ constructed by Mirkovic and Vilonen in \cite{MV07}, where $V_{\mu}(\lambda_G(b))$ denotes the $\lambda_G(b)$-weight space (for the action of $\That^\Gamma$) of $V_\mu$.
\end{conjecture}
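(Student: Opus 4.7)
The plan is to construct the canonical bijection
\[
\Sigma(X_\mu(b))/J_b(F) \;\longleftrightarrow\; \mathrm{MV}_\lambda(V_\mu),
\]
where $\mathrm{MV}_\lambda(V_\mu)$ denotes the set of Mirkovic-Vilonen cycles of weight $\lambda$ in $\overline{\Grass^\mu}$, by intersecting $X_\mu(b)$ with the semi-infinite cells in $\Grass_G$. Fix the Borel $B = TU$, and for each $\nu \in X_*(T)$ let $S_\nu = U(L)\,\epsilon^\nu K/K$ be the corresponding semi-infinite orbit; by \cite{MV07} the top-dimensional components of $S_\nu \cap \Grass^\mu$ form a basis of $V_\mu(\nu)$. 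The first step is a dimension estimate, combining the known dimension formulas for $X_\mu(b)$ and for Newton strata of $\Grass^\mu$ with the Mirkovic-Vilonen formula $\dim(S_\nu \cap \Grass^\mu) = \langle \rho, \mu+\nu\rangle$, to show that $\dim(X_\mu(b) \cap S_\nu) \leq \dim X_\mu(b)$ with equality precisely at $\nu = \lambda$. Thus every top-dimensional component $C \in \Sigma^\top(X_\mu(b))$ meets $S_\lambda$ in a dense open, and taking the closure in $\overline{\Grass^\mu}$ defines the map $C \mapsto Z(C) \in \mathrm{MV}_\lambda(V_\mu)$.

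The second step is to show that $C \mapsto Z(C)$ is $J_b(F)$-invariant and that the induced map on orbits is surjective. For well-definedness on orbits, I would exploit that $\lambda$ is the best integral approximation of $\nu_b$: after a preliminary $\sigma$-conjugation one can write $b$ as a bounded perturbation of $\epsilon^\lambda$, and then analyze the action of $j \in J_b(F)$ on a component $C$ in terms of translations by $\lambda$, which preserve the MV-cycle structure up to a central twist. Surjectivity will be obtained by the converse construction: given an MV cycle $Z$ of weight $\lambda$, produce a point $gK \in Z$ satisfying $g^{-1}b\sigma(g) \in K\mu(\epsilon)K$ using the approximation of $b$ by $\epsilon^\lambda$, and take the irreducible component of $X_\mu(b)$ through $gK$.

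For injectivity of the induced map on $J_b(F)$-orbits, I would reduce to the Hodge-Newton indecomposable case via the Hodge-Newton decomposition for minuscule affine Deligne-Lusztig varieties. If $(G,\mu,[b])$ decomposes with respect to a proper standard Levi $M \subsetneq G$, then $X_\mu^G(b)$ is built $J_b(F)$-equivariantly out of $X_{\mu_M}^M(b_M)$, while $V_\mu^G(\lambda)$ decomposes compatibly via branching to $\Mhat$; induction on the semisimple rank then disposes of this case. In the indecomposable (and ultimately superbasic) case, given $C, C'$ with $Z(C) = Z(C')$ one constructs an explicit $j \in J_b(F)$ with $jC = C'$ by leveraging the description of $J_b$ as an inner form of a Levi of $G$, extending the transitivity results of \cite{modpdiv, polpdiv} for $\GL_n$ and $\GSp_{2n}$.

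The main obstacle will be the second step, specifically pinning down the precise way in which $J_b(F)$ acts on the set of MV cycles via $C \mapsto Z(C)$. The orbit $S_\lambda$ is not $J_b(F)$-stable: a generic $j \in J_b(F)$ carries $C \cap S_\lambda$ to $(jC) \cap S_{\lambda'}$ with possibly $\lambda' \neq \lambda$, and one must show that after projection to $\overline{\Grass^\mu}$ the result is still an MV cycle of weight $\lambda$, equal to the translate of $Z(C)$ by the central character of $j$. This requires a careful analysis of how $J_b(F)$ intertwines the Iwasawa stratifications for different Borels, and it is precisely this analysis that distinguishes the full bijection from the weaker inequality $|\Sigma(X_\mu(b))/J_b(F)| \leq \dim V_\mu(\lambda)$. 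A secondary issue is that the conjecture concerns $\Sigma$, not $\Sigma^\top$, so the argument must be combined with the equidimensionality results surveyed in Section \ref{secequidim}, which cover almost all minuscule cases.
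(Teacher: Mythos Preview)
This statement is a \emph{conjecture} in the paper; it is not proved there in full generality. What the paper does establish is Corollary~\ref{cor superbasic} (the conjecture for $b$ superbasic and $\mu$ minuscule) and, for arbitrary $b$ with $\mu$ minuscule, only the surjection of Theorem~\ref{thm_main}/Theorem~\ref{thmnewmain}, with bijectivity known just in the two listed special cases. So there is no ``paper's own proof'' of the full statement to compare against.

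In the cases the paper does treat, its method is quite different from your plan. For superbasic $b$ it never touches MV cycles: it uses the cellular decomposition of $X_\mu(b)^0$ indexed by EL-charts and proves the key combinatorial bijection (type versus cotype) via the sweep map on Dyck words (Section~\ref{secsb}). For general $b$ it does not invoke the Hodge--Newton decomposition but passes to the smallest Levi $M$ in which $b$ becomes superbasic and analyses the chain $X_\mu^{M\subset G}(b) \xleftarrow{\beta} X_\mu^{P\subset G}(b) \xrightarrow{\alpha} X_\mu(b)$; Proposition~\ref{prop56} and Corollary~\ref{cor prosurj2} show that $\beta_\Sigma$ is a bijection on $(J_b\cap P)$-orbits, while $\alpha_\Sigma$ is only a surjection. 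Your MV-cycle framework is closer in spirit to \cite{XZ} and to the subsequent full proofs of the conjecture than to anything in this paper.

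Your proposal has a genuine gap exactly where you flag it, and it is not a technicality. In the paper's language, Theorem~\ref{thmnewmain} shows that for minuscule $\mu$ bijectivity is \emph{equivalent} to $J_b(F)$ acting trivially on $(J_b(F)\cap P(L))\backslash \Sigma^{\top}(X_\mu(b))$, and the paper has no mechanism to verify this beyond the two special cases. Your Step~3 does not supply one: Hodge--Newton indecomposability only rules out a proper Levi containing both $\mu$ and $[b]$; it does \emph{not} force $b$ to be superbasic (the centraliser $\Cent_G(\nu_b)$ can still be a large proper Levi), so your induction does not bottom out at the situation of \cite{modpdiv,polpdiv}, and ``extending those transitivity results'' is precisely the open problem. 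A smaller issue in Step~1: in the non-split case the phrase ``equality precisely at $\nu=\lambda$'' is incorrect, since $\lambda$ lives in $X^\ast(\Shat)$ and equality holds for every $\nu$ in $W.\mu \cap [\tilde\lambda+(1-\sigma)X_\ast(T)]$, which is exactly the index set of Theorem~\ref{thm_main}; moreover the closure of $C\cap S_{\tilde\lambda}$ in $\overline{\Grass^\mu}$ need not be a full MV cycle, as $\dim X_\mu(b)$ is in general strictly smaller than $\langle\rho,\mu+\tilde\lambda\rangle$, so at best you obtain ``the unique component of $S_{\tilde\lambda}\cap\Grass^\mu$ containing $C\cap S_{\tilde\lambda}$'', whose top-dimensionality still requires argument.
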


In this paper, we describe the set $J_b(F)\backslash \Sigma^\top(X_{\mu}(b))$ for minuscule $\mu$. Our main result, Theorem \ref{thmnewmain}, implies in particular the following theorem.

\begin{theorem} \label{thm_main}
 Let $\mu\in X_*(T)_{\dom}$ be minuscule, $b\in [b] \in B(G,\mu)$, and $\tilde\lambda\in X_\ast(T)$ an associated element as in Section \ref{secdeflambda}. There exists a canonical surjective map
 \[
  \phi\colon   W.\mu \cap [\tilde\lambda+(1-\sigma)X_*(T)] \epi J_b(F)\backslash \Sigma^\top(X_{\mu}(b)).
 \]
 Moreover, this map is a bijection in the following cases.
 \begin{subenv}
  \item $G$ is split.
  \item $[b] \cap \Cent_G(\nu_b)$ is a union of superbasic $\sigma$-conjugacy classes in $\Cent_G(\nu_b)$.
 \end{subenv}

\end{theorem}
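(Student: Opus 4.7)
The plan is to reduce, by a Hodge-Newton type decomposition, to the basic case in the Levi subgroup $M := \Cent_G(\nu_b)$, and then — under the hypotheses of (a) or (b) — further to situations where the action of $J_b(F)$ on top-dimensional irreducible components is known to be transitive.

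First I would construct $\phi$. Since $\nu_b$ is $F$-rational, $M$ is an $F$-rational Levi subgroup in which $b$ is basic. For each $\nu \in W.\mu$ whose $M$-dominant representative $\nu'$ has the correct Kottwitz invariant for $[b]_M$ — which is precisely the congruence $\nu \equiv \tilde\lambda \pmod{(1-\sigma)X_*(T)}$, since $\tilde\lambda$ is chosen as a lift of $\kappa_M([b])$ — a Hodge-Newton type argument (in the style of Mantovan-Viehmann, Hartl-Viehmann, Hamacher) produces a closed immersion $X_{\nu'}^M(b) \hookrightarrow X_\mu^G(b)$. The map $\phi$ then sends $\nu$ to the $J_b(F)$-orbit of any top-dimensional component of the image of $X_{\nu'}^M(b)$.

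Surjectivity would follow from the dimension formula for minuscule affine Deligne-Lusztig varieties (Hamacher, Zhu-Zhou): a direct computation shows that the Hodge-Newton strata achieve the top dimension of $X_\mu^G(b)$, so every top-dimensional component has a well-defined generic Hodge-Newton type realized by some $\nu$ as above. For bijectivity in case (a): when $G$ is split, $(1-\sigma)X_*(T) = 0$, so $W.\mu \cap [\tilde\lambda + (1-\sigma)X_*(T)]$ contains at most one element, and one invokes the transitivity of $J_b(F)$ on top-dimensional components in the split minuscule basic case. For case (b): the superbasic hypothesis on each simple factor of $M$ reduces the problem to the $\GL_n$-style transitivity result of Viehmann, extended by Chen, which gives that $J_b^M(F)$ acts transitively on the irreducible components of each superbasic $X_{\nu'}^M(b)$, yielding bijectivity after assembling factors.

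The principal obstacle is the Hodge-Newton reduction itself: one must verify that, modulo $J_b(F)$, the images of the $X_{\nu'}^M(b) \hookrightarrow X_\mu^G(b)$ for distinct $\nu \in W.\mu \cap [\tilde\lambda + (1-\sigma)X_*(T)]$ exhaust all top-dimensional components and, in cases (a) and (b), do so without overcounting. This hinges on a sharp dimension comparison between $X^M_{\nu'}(b)$ and $X^G_\mu(b)$ and on a careful tracking of how $J_b(F) = J_b^M(F)$ permutes the Hodge-Newton strata as $\nu$ varies within its $W_M$-orbit.
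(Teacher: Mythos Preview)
Your outline has a genuine gap at the very construction of $\phi$. The natural closed immersion $\Grass_M \hookrightarrow \Grass_G$ does restrict to a map $X_{\nu'}^M(b) \to X_\mu^G(b)$, but its image is \emph{not} top-dimensional: by the dimension formula one has
\[
\dim X_\mu^G(b) - \dim X_{\nu'}^M(b) = \langle \rho_G - \rho_M,\, \mu - \nu_b \rangle > 0
\]
whenever $M \subsetneq G$. So ``the $J_b(F)$-orbit of any top-dimensional component of the image of $X_{\nu'}^M(b)$'' is not defined, and there is no Hodge--Newton isomorphism available here (the Hodge--Newton condition fails for generic $[b]$). What the paper does instead is pass through the parabolic: one introduces
\[
X_\mu^{P\subset G}(b) = \{gK_P \in \Grass_P \mid g^{-1}b\sigma(g) \in K\mu(\epsilon)K\},
\]
which on the one hand decomposes $X_\mu^G(b)$ into locally closed pieces (Iwasawa decomposition), and on the other hand fibers over $X_\mu^{M\subset G}(b)$ with fibers of exactly the missing dimension. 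The real work is Proposition~\ref{prop56}: for any irreducible $Z \subset X_\mu^{M\subset G}(b)$, the group $J_b(F)\cap N(L)$ acts transitively on the set of irreducible components of the fiber $\beta^{-1}(Z)$. This is what produces a well-defined and bijective map at the level of $(J_b(F)\cap P(L))$-orbits; it is not a formality and does not follow from the dimension formula alone.

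A second issue is your choice $M = \Cent_G(\nu_b)$. With this $M$, the element $b$ is only \emph{basic} in $M$, and identifying $(J_b^M(F))\backslash \Sigma^{\top}(X_{\nu'}^M(b))$ with the predicted set is essentially the conjecture itself in the basic case --- unknown in general. The paper instead passes to the \emph{smallest} standard Levi $M$ in which $b$ is \emph{superbasic}; there the needed identification (Lemma~\ref{lemma sbunionthm}) is available by the explicit superbasic computation of Section~\ref{secsb}. Your argument for case~(1) is fine (a surjection from a singleton is a bijection; no transitivity input is needed), and for case~(2) your idea that $J_b(F)\subset P(L)$ forces $\alpha_\Sigma$ to be bijective is exactly what the paper uses --- but only after the parabolic framework above has been set up.
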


\begin{remark}\begin{assertionlist}

\item Let us explain how the theorem is a special case of the conjecture. Since $\mu$ is minuscule, we have for any $\tilde\mu \in X_\ast(T)$
  \[
   \dim V_\mu(\tilde\mu) = \left\{ \begin{array}{ll}
                               1 & \textnormal{if } \tilde{\mu} \in W.\mu \\
                               0 & \textnormal{otherwise}
                              \end{array} \right. 
  \]
where now $V_\mu(\tilde\mu)$ denotes the $\tilde\mu$-weight space for the action of $\hat T$. Thus indeed we obtain a bijection between the Mirkovic-Vilonen basis of $V_\mu(\lambda)$ and $ W.\mu \cap [\tilde\lambda+(1-\sigma)X_*(T)] $.

\item We can replace the weight space $V_\mu(\lambda_G(b))$ by the weight space $V_\mu(\lambda)$ for the action of $\Shat$ in Conjecture~\ref{conj main}. A priori one might expect the second space to be bigger; the equality is a consequence of the relation between $\lambda$ and the Kottwitz point $\kappa_G(b)$ (see Remark~\ref{rmk diag vs torus} for details).

\item An analogous formula has first been shown by Xiao and Zhu \cite{XZ} for $[b]$ such that the $F$-ranks of $J_b$ and $G$ coincide. In this case one can simply choose $\lambda=\nu_b$, the Newton point of $[b]$. It was then observed by Chen and Zhu (in oral communication) that an expression similar to the above should give $|J_b(F)\backslash \Sigma(X_{\mu}(b))|$ also for general $[b]$, and all $\mu$.

\item In particular, Theorem \ref{thm_main} and Theorem \ref{thmnewmain} apply to all cases that correspond to Newton strata in Shimura varieties of Hodge type.
\end{assertionlist}
\end{remark}

In the case where $b$ is superbasic, we prove the following stronger result, which was conjectured in \cite{hamacher15a}. For the ordering $\leq$ compare Section \ref{sec21}.

\begin{proposition} \label{prop superbasic}
 Assume $b \in G(L)$ is superbasic. There exists a decomposition into disjoint $J_b(F)$-stable locally closed subschemes
 \[
  X_\mu(b) = \bigcup_{\mathclap{\tilde\mu \in W.\mu \atop \restr{\tilde\mu}{\Shat} \leq \nu_b}} C_{\tilde\mu}
 \]
 such that $C_{\tilde{\mu}}$ intersected with any connected component of $\Grass_G$ is universally homeo\-morphic to 
an affine space. These affine spaces are of dimension $d(\tilde\mu) \coloneqq \sum \lfloor \langle  \tilde\mu - \mu_{{\rm adom}}, \hat\omega_F \rangle \rfloor$ where we take the sum over all relative fundamental coweights $\hat\omega_F$ of $\Ghat$ and where $\mu_{{\rm adom}}$ denotes the anti-dominant representative in the Weyl group orbit of $\mu$.
\end{proposition}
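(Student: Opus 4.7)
The plan is to reduce the statement to an explicit concrete computation in the superbasic case, and then read off the cell decomposition directly. As a first step, I would reduce to the case where $G$ is absolutely simple. The formation of the affine Grassmannian, the affine Deligne--Lusztig variety, the group $J_b(F)$, and all the combinatorial invariants $\mu_{\rm adom}$, $\hat\omega_F$, $\nu_b$ entering the statement are compatible with Weil restriction of scalars and with products, and superbasicity is preserved. The classification of superbasic $\sigma$-conjugacy classes then forces the situation to be of type $A$: up to central isogeny one reduces to $G = \Res_{F_d/F}\GL_n$ with $b$ equal to a standard cyclic representative of slope $s/n$ with $\gcd(s,n)=1$, so that $J_b(F)$ is (essentially) the unit group $D^\times$ of the central division algebra of invariant $s/n$ over $F$.

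In this concrete model, I would pass to the lattice description of $\Grass_G$, picking as $b$ a scalar multiple of the standard cyclic permutation matrix, and define the stratification cell-by-cell as follows. For each $\tilde\mu \in W.\mu$ define
\[
  C_{\tilde\mu} = X_\mu(b) \cap S_{\tilde\mu},
\]
where $S_{\tilde\mu} = U(L)\cdot\tilde\mu(\epsilon) K/K$ is the semi-infinite cell of $\Grass_G$ through $\tilde\mu(\epsilon)$ for a convenient Borel. Since $b$ is superbasic, it normalises the reference Iwahori up to a cyclic shift, so the family $\{S_{\tilde\mu}\}_{\tilde\mu \in W.\mu}$ is permuted by $b\sigma$ in a way compatible with the $J_b(F)$-action by left multiplication; hence each $C_{\tilde\mu}$ is $J_b(F)$-stable and locally closed. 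Non-emptiness of $C_{\tilde\mu}$ would be shown to be equivalent to the Mazur-type inequality $\restr{\tilde\mu}{\Shat} \preceq \nu_b$ by a direct analysis of when the twisted Frobenius $b\sigma$ can move a lattice in $S_{\tilde\mu}$ into one of the prescribed relative position $\mu$.

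For the dimension and affine-space claim, I would parameterise the intersection explicitly. A point of $S_{\tilde\mu}$ is described by an upper-triangular unipotent matrix with entries in $L/\Ocal_L$ multiplied by $\tilde\mu(\epsilon)$, and the condition $g^{-1}b\sigma(g) \in K\mu(\epsilon)K$ translates into a system of valuation bounds on these entries. Because $b$ is the explicit cyclic shift, these bounds decouple entry-by-entry into independent affine conditions, yielding on each connected component of $\Grass_G$ a parameterisation by an affine space. Counting the free parameters, a straightforward but careful bookkeeping yields the quantity
\[
  d(\tilde\mu) = \sum_{\hat\omega_F} \lfloor \langle \tilde\mu - \mu_{\rm adom}, \hat\omega_F \rangle \rfloor,
\]
with the floor functions appearing precisely because the pairings need to be reinterpreted after Galois descent through $\Res_{F_d/F}$; when $G$ is split the floors disappear.

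The main obstacle is the final matching step: translating the concrete, coordinate-level parameterisation of $C_{\tilde\mu}$ into the intrinsic, group-theoretic formula involving the relative fundamental coweights $\hat\omega_F$ of $\Ghat$ requires a careful identification of the cyclic-permutation combinatorics with the Galois action on the root datum. This is also where the difference between "isomorphic" and "universally homeomorphic" enters: in the mixed-characteristic setting the parameterising map is a bijection on points but is only an isomorphism after perfection, so the conclusion must be phrased accordingly.
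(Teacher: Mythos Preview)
Your strategy diverges from the paper's in the choice of cell decomposition, and there is a genuine gap at the point you call ``straightforward bookkeeping''.

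The paper does \emph{not} cut $X_\mu(b)$ by semi-infinite orbits $S_{\tilde\mu}=U(L)\tilde\mu(\epsilon)K/K$. Instead, for $G=\Res_{F_d/F}\GL_n$ it uses a decomposition $X_\mu(b)^0=\bigcup_A S_A$ indexed by normalised EL-charts $A$ of Hodge point $\mu$ (constructed in earlier work, and identified with Iwahori-orbit intersections for a suitable $b$). Each $S_A$ is an affine space of dimension $\#V_A$, and the key calculation is that $\#V_A=d(\operatorname{cotype}(A))$, \emph{not} $d(\operatorname{type}(A))$. The desired labelling $C^0_{\tilde\mu}:=S_A$ with $\tilde\mu=\operatorname{cotype}(A)$ therefore requires that $A\mapsto\operatorname{cotype}(A)$ be a bijection on EL-charts of given Hodge point. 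This is the content of the paper's Theorem on cotypes, proved by identifying the type-to-cotype map with a multi-component sweep map and invoking Williams' bijectivity theorem. The passage from $\Res\,\GL_n$ to general superbasic $G$ then goes via the homeomorphism of connected components under $G\to G^{\rm ad}\leftarrow G'=\prod\Res\,\GL_{n_i}$, together with a check that $J_{b'}(F)^0\to J_{b_{\rm ad}}(F)^0$ is surjective so that $J_b(F)^0$-stability transports.

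Your proposal skips precisely this combinatorial hinge. If you label the cells by the semi-infinite coordinate $\tilde\mu$, there is no a priori reason for the dimension of $X_\mu(b)\cap S_{\tilde\mu}$ to equal $d(\tilde\mu)$; in the paper's framework the natural label of the cell is the \emph{type}, while the dimension formula matches the \emph{cotype}, and the two differ by a non-trivial permutation (the sweep map). Your sentence ``counting the free parameters, a straightforward but careful bookkeeping yields $d(\tilde\mu)$'' is exactly where this difficulty is hidden: either you must redo the sweep-map argument in disguise, or your labelling will not satisfy $\dim C_{\tilde\mu}=d(\tilde\mu)$.

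There is also a problem with $J_b(F)$-stability as you have stated it. Each semi-infinite cell $S_{\tilde\mu}$ lies in a single connected component of $\Grass_G$, while $J_b(F)$ permutes the connected components simply transitively; so $X_\mu(b)\cap S_{\tilde\mu}$ cannot itself be $J_b(F)$-stable. Your sentence ``the family $\{S_{\tilde\mu}\}$ is permuted by $b\sigma$ \ldots\ hence each $C_{\tilde\mu}$ is $J_b(F)$-stable'' is a non sequitur: a non-trivial permutation of cells is the opposite of individual stability. What one actually needs (and what the paper proves) is that the piece in a fixed connected component is $J_b(F)^0$-stable, and then one \emph{defines} $C_{\tilde\mu}$ as the $J_b(F)$-saturation; the $J_b(F)^0$-stability for general $G$ is established by transporting through the adjoint group and checking surjectivity of $J_{b'}(F)^0\to J_{b_{\rm ad}}(F)^0$.
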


Note that varying $b$ within $[b]$ only changes $X_{\mu}(b)$ by an isomorphism. For suitably chosen $b\in [b]$, the connected components of $C_{\tilde\mu}$ are precisely the intersections of $X_\mu(b)$ with some Iwahori-orbit on $\Grass_G$ (see \cite[Section 3]{cvstrata}). Since the latter form a stratification on $\Grass_G$, we can apply the localisation long exact sequence to calculate the cohomology of $X_\mu(b)$. For example for the constant sheaf one obtains the following result.
\begin{corollary}
 Assume $b \in G(L)$ is superbasic and  denote by $J_b(F)^0$ the (unique) parahoric subgroup of $J_b(F)$. Then the $J_b(F)$-equivariant cohomology of $X_\mu(b)$ (for $\ell\neq p$) is given by
 \begin{align*}
  H_c^{2i+1}(X_\mu(b),\QQ_{\ell}) &= 0 \\
  H_c^{2i}(X_\mu(b),\QQ_{\ell}) &= \Ind_{J_b(F)^0}^{J_b(F)} V_i 
 \end{align*}
 where $V_i$ is a diagonalisable $J_b(F)^0$-representation with coefficients in $\QQ_{\ell}$ and of dimension $\#\{\tilde\mu \in W.\mu \mid d(\tilde\mu) = i\}$.
\end{corollary}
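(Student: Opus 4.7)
The plan is to combine the $J_b(F)$-equivariant stratification of Proposition~\ref{prop superbasic} with the localisation long exact sequence in compactly supported $\ell$-adic cohomology. I choose a total ordering $\tilde\mu_1,\ldots,\tilde\mu_N$ of the (finite) index set refining the closure order on the strata $\{C_{\tilde\mu}\}$, so that each $U_n \coloneqq \bigcup_{j\le n} C_{\tilde\mu_j}$ is a $J_b(F)$-stable open subscheme. Since every connected component of $C_{\tilde\mu}$ is universally homeomorphic to $\AA^{d(\tilde\mu)}$, and a universal homeomorphism induces an isomorphism on \'etale cohomology, $H_c^\ast(C_{\tilde\mu}, \QQ_\ell)$ is concentrated in the single even degree $2d(\tilde\mu)$. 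Hence, by induction on $n$, every connecting homomorphism in the long exact sequences
\[
 \cdots \to H_c^\ast(C_{\tilde\mu_n}, \QQ_\ell) \to H_c^\ast(U_n, \QQ_\ell) \to H_c^\ast(U_{n-1}, \QQ_\ell) \to \cdots
\]
vanishes for parity reasons, yielding a $J_b(F)$-equivariant decomposition
\[
 H_c^{2i}(X_\mu(b), \QQ_\ell) \;=\; \bigoplus_{\tilde\mu : d(\tilde\mu) = i} H_c^{2i}(C_{\tilde\mu}, \QQ_\ell),
\]
together with the vanishing of $H_c^{2i+1}(X_\mu(b), \QQ_\ell)$.

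Next I identify the $J_b(F)$-module structure on each summand. The superbasic hypothesis forces $J_b$ to be anisotropic modulo the centre, so the Kottwitz homomorphism presents $J_b(F)/J_b(F)^0$ as a free cyclic group acting simply transitively on the set of connected components of $X_\mu(b)$; this is the classical description recalled, for instance for $\GL_n$, in \cite{modpdiv}. Because $C_{\tilde\mu}$ is $J_b(F)$-stable and meets each connected component of $X_\mu(b)$ in a single copy of $\AA^{d(\tilde\mu)}$, its own components are likewise simply transitively permuted by $J_b(F)/J_b(F)^0$. A direct check of the stabiliser of one component then gives
\[
 H_c^{2d(\tilde\mu)}(C_{\tilde\mu}, \QQ_\ell) \;\cong\; \Ind_{J_b(F)^0}^{J_b(F)} \chi_{\tilde\mu},
\]
where $\chi_{\tilde\mu}$ is the smooth character through which the compact open subgroup $J_b(F)^0$ acts on the one-dimensional top compactly supported cohomology of that component. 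Setting $V_i \coloneqq \bigoplus_{d(\tilde\mu) = i} \chi_{\tilde\mu}$ and using that compact induction commutes with direct sums yields the desired formula, and $V_i$ has the stated dimension by construction.

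The step requiring the most care is the transitivity assertion, namely that the components of $X_\mu(b)$ (and hence of each $C_{\tilde\mu}$) form a single free $J_b(F)/J_b(F)^0$-orbit. This depends essentially on the superbasic hypothesis, via the computation of the image of the Kottwitz map on $J_b(F)$ inside $\pi_1(G)_\Gamma$ and the equality of $J_b(F)^0$ with its kernel; with $\nu_b$ central in $\Cent_G(\nu_b)$ the relevant bookkeeping becomes tractable. Once this is settled, the diagonalisability of $V_i$ follows at once, since each $\chi_{\tilde\mu}$ is a character of the compact group $J_b(F)^0$, and no finer analysis of the individual $\chi_{\tilde\mu}$ is needed for the statement of the corollary.
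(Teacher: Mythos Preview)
Your approach is the one the paper has in mind: the text immediately preceding the corollary already says that one applies the localisation long exact sequence to the decomposition of Proposition~\ref{prop superbasic}, and your write-up fleshes this out correctly, including the identification of each $H_c^{2d(\tilde\mu)}(C_{\tilde\mu},\QQ_\ell)$ as an induced representation via the transitivity of $J_b(F)$ on $\pi_0(X_\mu(b))$ (which the paper uses as well, citing \cite[Thm.~1.2]{CKV15}).

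One point deserves more care. You assume without comment that the $C_{\tilde\mu}$ can be totally ordered so that each partial union $U_n$ is open; equivalently, that the closure of each $C_{\tilde\mu}$ is a union of other $C_{\tilde\mu'}$. Proposition~\ref{prop superbasic} only asserts that the $C_{\tilde\mu}$ are locally closed, which does not by itself give such a filtration. The paper handles this by observing (in the paragraph before the corollary) that for a suitable choice of $b$ in its $\sigma$-conjugacy class the connected components of the $C_{\tilde\mu}$ are precisely the intersections of $X_\mu(b)$ with Iwahori orbits on $\Grass_G$ (citing \cite[Section~3]{cvstrata}); since the Iwahori orbits form a genuine stratification of $\Grass_G$, the required closure relations follow. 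You should invoke this rather than positing a ``closure order'' directly. A smaller quibble: $J_b(F)/J_b(F)^0$ is free abelian of rank equal to the number of simple factors of $G^{\ad}$, not free cyclic in general, though this does not affect your argument.
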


\noindent{\it Acknowledgement.} We thank Miaofen Chen and Xinwen Zhu for helpful conversations and in particular for sharing their conjecture describing the $J_b(F)$-orbits of irreducible components in terms of $V_{\mu}(\lambda)$.

\section{Definition of $\lambda$}\label{secdeflambda}

 We associate with every $\sigma$-conjugacy class $[b]$ a not necessarily dominant coinvariant $\lambda_G(b) \in X^\ast(\That)_\Gamma$ which lifts the Kottwitz point of $b$ and at the same time is a `best approximation' of the Newton point (in a sense to be made precise below). In the split case it is closely connected to the notion of $\sigma$-straight elements in the extended affine Weyl group of $G$.

 \subsection{Invariants of $\sigma$-conjugacy classes}\label{sec21}

 By work of Kottwitz \cite{kottwitz85}, a $\sigma$-conjugacy class $[b] \in B(G)$ is uniquely determined by two invariants - the Newton point $\nu_G(b) \in X_\ast(S)_{\QQ,\dom}$ and the Kottwitz point $\kappa_G(b) \in \pi_1(G)_\Gamma$. Here $\pi_1(G)$ denotes Borovoi's fundamental group, i.e.\ the quotient of $X_\ast(T)$ by its coroot lattice. We also consider the Kottwitz homomorphism $w_G$ as in \cite{Kottwitz85}. Let $w\colon  X_\ast(T) \epi \pi_1(G)$ denote the canonical projection. By the Cartan decomposition $G(L)=\coprod_{\mu\in X_*(T)_{\dom}}K\mu(\epsilon)K$, and we extend $w$ to a map $w_G: G(L)\rightarrow \pi_1(G)$ mapping $K\mu(\epsilon)K$ to $w(\mu)$. Then for every $b\in G(L)$ the projection of $w_G(b)$ to $\pi_1(G)_{\Gamma}$ coincides with $\kappa_G(b)$.

 We define a partial order $\preceq$ on $X^\ast(\That)$ such that $\mu' \preceq \mu$ holds iff $\mu - \mu'$ is a linear combination of positive roots with non-negative, integral coefficients. Since the set of positive roots is preserved by the Galois action, this descends to a partial order on $X^\ast(\That)_\Gamma$. Similarly, we define its rational analogue $\leq$ on $X^\ast(T)_\QQ$ such that $\mu \leq \mu'$ holds iff $\mu - \mu'$ is a linear combination of positive roots with non-negative, rational coefficients. By the same argument as above this order descends to $X^\ast(\That)_{\QQ,\Gamma} = X^\ast(\Shat)$.
 
 \begin{lemdef}
Let $b \in G(L)$. Then the set 
$$ \{ \tilde\lambda \in X^\ast(\That)_\Gamma \mid w(\tilde\lambda) = \kappa_G(b), \restr{\tilde\lambda}{\Shat} \leq
 \nu_G(b) \}$$
 has a unique maximum $\lambda_G(b)$ characterised by the property that $w(\lambda_G(b)) = \kappa_G(b)$ and that for every relative fundamental coweight $\omega_{\Ghat,F}^\vee$ of $\Ghat$, one has
  \begin{equation} \label{eq lambda}
   \langle \lambda_G(b) - \nu_G(b), \omega_{\Ghat,F}^\vee \rangle \in (-1,0].
  \end{equation}
 \end{lemdef}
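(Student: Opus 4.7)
The plan is to define $\lambda_G(b)$ directly by the explicit characterisation in the ``moreover'' clause, and then to verify that it lies in the given set and that it dominates every element of it.

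\emph{Existence of an element satisfying the characterisation.} Since $w\colon X^\ast(\That)_\Gamma \to \pi_1(G)_\Gamma$ is surjective, fix any lift $\tilde\lambda_0$ of $\kappa_G(b)$. Because every relative fundamental coweight $\omega_{\Ghat,F}^\vee$ lies in $X_\ast(\Shat) \subset X_\ast(\That)$ and is $\Gamma$-invariant, the pairing $\langle \tilde\lambda, \omega_{\Ghat,F}^\vee \rangle$ is a well-defined integer for every $\tilde\lambda \in X^\ast(\That)_\Gamma$. Using the duality between relative simple coroots $\alpha_{F,i}^\vee$ and relative fundamental coweights $\omega_{\Ghat,F,j}^\vee$---the $\alpha_{F,i}^\vee$ being images in $X^\ast(\That)_\Gamma$ of elements of the coroot lattice $Q^\vee \subset X_\ast(T)$, hence preserving $w$---one can adjust $\tilde\lambda_0$ by integer multiples of the $\alpha_{F,i}^\vee$ so that each integer $\langle \tilde\lambda, \omega_{\Ghat,F,i}^\vee \rangle$ becomes the unique one in $(\langle \nu_G(b), \omega_{\Ghat,F,i}^\vee \rangle - 1,\, \langle \nu_G(b), \omega_{\Ghat,F,i}^\vee \rangle]$. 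This produces an element $\lambda_G(b) \in X^\ast(\That)_\Gamma$ satisfying $w(\lambda_G(b)) = \kappa_G(b)$ and~(\ref{eq lambda}).

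\emph{Uniqueness of the characterisation.} The crucial observation is that for $\delta \in X^\ast(\That)_\Gamma$ in the image of $Q^\vee$, the pairing $\langle \delta, \omega_{\Ghat,F,i}^\vee \rangle$ equals (up to a positive normalisation constant) the $\Gamma$-orbit sum of the coroot coefficients of any lift of $\delta$, by $\Gamma$-invariance of $\omega_{\Ghat,F,i}^\vee$; if every orbit sum vanishes, the lift lies in $(1-\sigma)Q^\vee$, so $\delta = 0$ in $X^\ast(\That)_\Gamma$. Applying this to the difference of two elements satisfying both conditions of the characterisation yields uniqueness.

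\emph{Membership and maximality.} By the Newton--Kottwitz compatibility, $\nu_G(b) - \lambda_G(b)$ lies in the $\QQ$-span of coroots; (\ref{eq lambda}) gives it non-negative pairing with every relative fundamental coweight, so it is a non-negative rational combination of positive coroots, i.e.\ $\restr{\lambda_G(b)}{\Shat} \preceq \nu_G(b)$. For maximality, given any $\tilde\lambda$ in the set,
\[
\langle \lambda_G(b) - \tilde\lambda, \omega_{\Ghat,F,i}^\vee \rangle = \langle \lambda_G(b) - \nu_G(b), \omega_{\Ghat,F,i}^\vee \rangle + \langle \nu_G(b) - \tilde\lambda, \omega_{\Ghat,F,i}^\vee \rangle
\]
is an integer strictly greater than $-1$---the first summand lies in $(-1,0]$ by~(\ref{eq lambda}) and the second is $\geq 0$ since $\tilde\lambda \preceq \nu_G(b)$---hence is $\geq 0$. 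As all orbit sums of $\lambda_G(b) - \tilde\lambda$ are non-negative integers, this difference is a non-negative integral combination of positive coroots in $X^\ast(\That)_\Gamma$, so $\tilde\lambda \preceq \lambda_G(b)$.

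The main obstacle I anticipate is the careful $\Gamma$-coinvariant bookkeeping: pinning down the normalisations in the duality between relative simple coroots and relative fundamental coweights, and making the ``orbit sum equals pairing'' step precise beyond the quasi-split case. For quasi-split $G$ the argument is transparent; in general one passes to the quasi-split inner form, using that $\nu_G(b)$, $\kappa_G(b)$, and the relative root datum depend only on the inner class.
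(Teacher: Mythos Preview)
Your approach is essentially the paper's: both choose an arbitrary lift of $\kappa_G(b)$ in $X^*(\That)_\Gamma$ and correct it by an integer combination of the relative simple roots $\hat\beta$ of $\Ghat$ so that each pairing with a relative fundamental coweight lands in the half-open interval of length~$1$ below $\langle \nu_G(b),\omega_{\hat\beta}^\vee\rangle$; the paper simply packages this correction as the closed formula $\lambda_G(b)=\lambda'-\sum_{\hat\beta}\lceil\langle\lambda'-\nu_G(b),\omega_{\hat\beta}^\vee\rangle\rceil\,\hat\beta$, and then reads off maximality and uniqueness from the same integrality-of-differences observation you use.

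One small slip worth fixing: relative fundamental coweights need not lie in $X_*(\That)$ (already for $\Ghat=\SL_2$ the fundamental coweight is $\tfrac12\alpha^\vee$), so the individual pairings $\langle\tilde\lambda,\omega_{\Ghat,F}^\vee\rangle$ are not integers in general. What \emph{is} integral---and all that your existence, uniqueness, and maximality steps actually require---is the pairing of any difference $\tilde\lambda-\tilde\lambda'$ of two elements in the same $\hat Q_\Gamma$-coset with $\omega_{\Ghat,F}^\vee$, since $\langle\hat\beta,\omega_{\hat\beta'}^\vee\rangle=\delta_{\hat\beta,\hat\beta'}$; this is precisely how the paper phrases it. Your worry about normalisations in the final paragraph is therefore resolved once you state the duality at the level of $\hat Q_\Gamma$ rather than all of $X^*(\That)_\Gamma$.
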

 \begin{proof}
  Denote by $\Qhat \subset X^\ast(\That)$ the root lattice. Then the restriction $X^\ast(\That) \epi X^\ast(\Shat)$ canonically identifies the relative root lattice with $\Qhat_\Gamma$. Note that the preimage $w^{-1}(\kappa_G(b))_\Gamma$ in $X^\ast(\That)_\Gamma$ is a $\Qhat_\Gamma$-coset. Thus one has $\lambda' \succeq \lambda$ for two elements in $w^{-1}(\kappa_G(b))_\Gamma$ iff
  \[
   \langle \lambda', \omega_{\Ghat,F}^\vee \rangle - \langle \lambda, \omega_{\Ghat,F}^\vee \rangle \geq 0
  \]
  for all relative fundamental coweights $\omega_{\Ghat,F}^\vee$ of $\Ghat$ and moreover the left hand side always has integral value. Thus if a $\lambda_G(b)$ as in (\ref{eq lambda}) exists, it is the unique maximum. One easily constructs such a $\lambda_G(b)$ by choosing some $\lambda' \in w^{-1}(\kappa_G(b))_\Gamma$ and defining
  \[
   \lambda_G(b) \coloneqq \lambda' - \sum_{\hat\beta} \lceil \langle \lambda' - \nu_G(b),\omega_{\hat\beta}^\vee \rangle \rceil \cdot \hat\beta, 
  \]
  where the sum runs over all positive simple roots $\hat\beta \in \Qhat_\Gamma$ and $\omega_{\hat\beta}^\vee$ denotes the corresponding fundamental coweight. 
 \end{proof}

 \begin{example}
  Assume that $G = \GL_n$, $B$ is the upper triangular Borel subgroup and that $S=T$ is the diagonal torus. Then $\lambda_G(b)$ has the following geometric interpretation. 
  To an element $\nu \in \QQ^n \cong X_\ast(\Shat)_\QQ$, we associate a polygon $P(\nu)$ which is defined over $[0, n]$ with starting point $(0, 0)$ and slope $\nu_i$ over $(i - 1, i)$.
   We denote by $f_\nu$ the corresponding piecewise linear function. Then $P(\nu_G(b))$ is the (concave) Newton polygon of $b$ and $P(\lambda_G(b))$ is the largest polygon below $P(\nu_G(b))$ with integral slopes and break points.
    Indeed, the fundamental coweights of $\GL_n$ are given by $\omega_i = (\underbrace{1,\ldots,1}_{i\textnormal{ times}},\underbrace{0,\ldots,0}_{n-i \textnormal{ times}})$, thus
  \[
   \langle \lambda_G(b) - \nu_G(b), \omega_{i} \rangle = f_{\lambda_G(b)}(i) - f_{\nu_G(b)}(i),
  \]
  which implies $f_{\lambda_G(b)}(i) = \lfloor f_{\nu_G(b)}(i) \rfloor$ by (\ref{eq lambda}).

  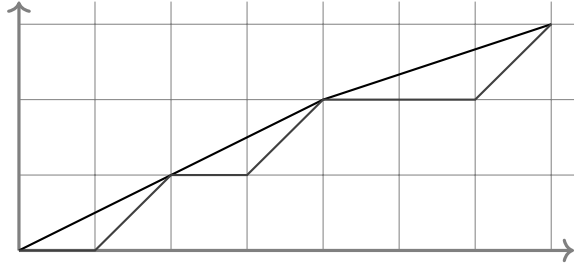
\begin{figure}[h] 
  \begin{center}
   \begin{tikzpicture}
    \draw[step=1cm,gray, ultra thin]  (0,0) grid (7.3,3.3);
    \draw[->,gray,very thick] (0,0) -- (7.3,0) ;
    \draw[->,gray,very thick] (0,0) -- (0,3.3) ;
        
    \draw[thick] (0,0) -- (4,2) -- (7,3) ;    
    
    \draw[thick,darkgray] (0,0) -- (1,0) -- (2,1) -- (3,1) -- (4,2) -- (5,2) -- (6,2) -- (7,3) ;
   \end{tikzpicture}
   \caption{The polygons associated to $\nu_G(b)$ and $\lambda_G(b)$ for $[b]$ given by $\nu_G(b)= (\frac{1}{2},\frac{1}{2},\frac{1}{2},\frac{1}{2},\frac{1}{3},\frac{1}{3},\frac{1}{3})$.}
  \end{center}
 \end{figure}
 \end{example}

 \begin{lemma} \label{lemma lambda}
  Let $f\colon  H \to G$ be a morphism of reductive groups over $\Ocal_F$. Then we have $\lambda_G(f(b)) = f(\lambda_H(b))$ in the following cases.
  \begin{subenv}
   \item $f$ is a central isogeny.
   \item $f$ is the embedding of a standard Levi subgroup, such that $\nu_H(b)$ is $G$-dominant. 
  \end{subenv}
 \end{lemma}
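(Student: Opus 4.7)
The plan is to verify in each case that $f(\lambda_H(b))$ satisfies the two characterizing properties of $\lambda_G(f(b))$ from the preceding Lemma/Definition, and then conclude by uniqueness.

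By functoriality of the Newton and Kottwitz invariants along a morphism of reductive groups, $\nu_G(f(b)) = f(\nu_H(b))$ and $\kappa_G(f(b)) = f(\kappa_H(b))$, whence $w(f(\lambda_H(b))) = f(w(\lambda_H(b))) = \kappa_G(f(b))$ in both cases. The remaining work is to check that
\[
 \langle f(\lambda_H(b)) - \nu_G(f(b)),\, \omega_{\Ghat,F}^\vee \rangle \in (-1, 0]
\]
for every relative fundamental coweight $\omega_{\Ghat,F}^\vee$ of $\Ghat$.

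For (1), a central isogeny induces a Galois-equivariant identification between the absolute (and hence relative) root data of $G$ and $H$, preserving simple roots and coroots; dually, this matches the relative fundamental coweights of $\Ghat$ with those of $\Hhat$. Under this identification the pairing condition for $\Ghat$ coincides verbatim with the one for $\Hhat$, which holds for $\lambda_H(b)$ by construction.

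For (2), I identify $\Hhat$ with a standard Levi of $\Ghat$ sharing the maximal torus $\hat T$, so that $f$ acts as the identity on $X^\ast(\hat T)_\Gamma$. The hypothesis that $\nu_H(b)$ is $G$-dominant gives $\nu_G(f(b)) = \nu_H(b)$, and the difference $\lambda_H(b) - \nu_H(b)$ has zero image in $\pi_1(H)_{\QQ,\Gamma}$, hence lies in the rational root lattice of $\Hhat$. Now partition the relative fundamental coweights of $\Ghat$ according to whether their indexing simple root belongs to $\Hhat$. Those outside vanish on the root lattice of $\Hhat$ and contribute pairing zero. Those inside agree, on the root lattice of $\Hhat$, with the corresponding relative fundamental coweights of $\Hhat$ (both are determined by the same dual-basis conditions against the simple roots of $\Hhat$), so the pairing reduces to $\langle \lambda_H(b) - \nu_H(b),\, \omega_{\Hhat,F}^\vee \rangle \in (-1, 0]$.

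The main point requiring care is the matching of relative fundamental coweights: in (1) one must verify that the central isogeny preserves the Galois action on the root datum, and in (2) that $\Ghat$-fundamental coweights whose index is a simple root of $\Hhat$ restrict to the corresponding $\Hhat$-fundamental coweight on the root lattice of $\Hhat$, while those indexed by simple roots outside $\Hhat$ vanish there.
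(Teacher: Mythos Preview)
Your proof is correct and follows essentially the same approach as the paper: both verify that $f(\lambda_H(b))$ satisfies the characterizing condition~\eqref{eq lambda}, and in case~(2) both split the relative fundamental coweights of $\Ghat$ according to whether the corresponding simple root lies in $\Hhat$, using that $\lambda_H(b)-\nu_H(b)$ has trivial image in $\pi_1(H)_{\Gamma,\QQ}$. The only cosmetic difference is in case~(1), where the paper packages the argument via the identification $X^\ast(\That_H)_\Gamma \cong X^\ast(\That_G)_\Gamma \times_{\pi_1(G)_\Gamma} \pi_1(H)_\Gamma$ rather than directly matching the fundamental coweights as you do.
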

 \begin{proof}
  If $f$ is a central isogeny, we have $X^\ast(\That_H) = X^\ast(\That_G) \times_{\pi_1(G)} \pi_1(H)$ compatibly with the obvious Galois action and partial order on the right hand side. Thus $f$ and $\lambda$ commute.

  Now assume that $H$ is a standard Levi subgroup of $G$ and $\nu_H(b)$ is dominant, i.e.\ $\nu_H(b) = \nu_G(b)$. By \eqref{eq lambda} we have $-1 < \langle f(\lambda_H(b)) - \nu_G(b), \omega_{\Hhat,F}^\vee \rangle \leq 0$ for every relative fundamental coweight of $H$. Let $\omega_{\Ghat,F}^\vee$ be a relative fundamental coweight of $G$, but not of $H$. Then $\omega_{\Ghat,F}^\vee$ factorises through the center of $H$, thus for every quasi-character $\nu' \in X_\ast(\That)_\QQ$ the value of $\langle \nu',\omega_{\Ghat,F}^\vee \rangle$ is determined by the image of $\nu'$ in $\pi_1(H)_{\Gamma,\QQ}$. In $\pi_1(H)_{\Gamma,\QQ}$ we have equalities
  \[
   (\textnormal{image of } \nu_H(b)) = (\textnormal{image of } \kappa_H(b)) = (\textnormal{image of } \lambda_H(b)),
  \]
  thus $\langle \nu_H(b) - \lambda_H(b), \omega_{\Ghat,F}^\vee \rangle = 0$.
 \end{proof}

 \begin{notation}
 For fixed $b\in G(L)$ we denote by $\tilde\lambda \in X^\ast(\That)$ an arbitrary but fixed lift of $\lambda_G(b)$ and by $\lambda$ its image in $X^\ast(\Shat)$.
 \end{notation}

 \begin{remark} \label{rmk diag vs torus}
 Since $G$ is quasi-split, the maximal torus of the derived group $T^{\rm der}$ is induced and hence $\widehat{T^{\rm der}}^\Gamma \subseteq \Shat$. Thus any two elements in $X^\ast(\That^\Gamma)$ with the same image in  $X^\ast(\Shat)$ differ by a central cocharacter and thus have a different image in $\pi_1(G)_\Gamma$. In particular
 \[
  \{\tilde\mu \in X^\ast(\That) \mid \restr{\tilde\mu}{\That^\Gamma} = \lambda_G(b), w_G(\tilde\mu) = w_G(\mu)\} = \{\tilde\mu \in X^\ast(\That) \mid \restr{\tilde\mu}{\Shat} = \lambda, w_G(\tilde\mu) = w_G(\mu)\}.
 \]
 Since $V_\mu(\tilde\mu) = 0$ unless $\tilde\mu \leq \mu$, this implies
 $
  V_\mu(\lambda_G(b)) = V_\mu(\lambda).
 $
 \end{remark}
 
 \subsection{A group theoretic definition of $\lambda_G$ in the split case}

 We denote by $\Wtilde=\Wtilde_G \coloneqq  (\Norm_G(T))(L)/T(O_L)$ the extended affine Weyl group of $G$. Recall that we have canonical isomorphisms $\Wtilde_G \cong X_\ast(T) \rtimes W \cong W_a \rtimes \Omega_G$ where $W_a$ denotes the affine Weyl group of $G$ and $\Omega_G \subset \Wtilde_G$ the set of elements stabilising the base alcove, which we choose as the unique alcove in the dominant Weyl chamber whose closure contains $0$. In particular, we can lift the length function $\ell$ on $W_a$ to $\Wtilde_G$. 

 The embedding $\Norm_G(T) \mono G$ induces a natural map $B(\Wtilde_G) \to B(G)$, where $B(\Wtilde_G)$ denotes the set of $\Wtilde_G$-$\sigma$-conjugacy classes in $\Wtilde_G$. In general the notion of $\Wtilde_G$-conjugacy is finer than the notion of $G(L)$-conjugacy. Hence we consider only a certain subset of $B(\Wtilde_G)$.

 \begin{definition}
  \begin{subenv}
   \item We call $x \in \Wtilde_G$ basic if it is contained in $\Omega_G$. A $\sigma$-conjugacy class $O \in B(\Wtilde_G)$ is called basic if it contains a basic element.
   \item An element $x \in \Wtilde_G$ is called $\sigma$-straight if it satisfies
   \[
    \ell(x \sigma(x) \cdots \sigma^{n-1}(x)) = \ell(x) + \ell(\sigma(x)) + \cdots + \ell(\sigma^{n-1}(x)).
   \]
   for any non-negative integer $n$. Note that the right hand side might also be written as $n\cdot \ell(x)$. A $\sigma$-conjugacy class $O \in B(\Wtilde_G)$ is called straight if it contains a $\sigma$-straight element.
  \end{subenv}
 \end{definition}

 He and Nie gave a characterisation of the set of straight $\sigma$-conjugacy classes which is analogous of Kottwitz' description of $B(G)$ in \cite[\S~6]{kottwitz85}.

 \begin{proposition}[{\cite[Prop.~3.2]{HN14}}] \label{prop red to basic}
  A $\sigma$-conjugacy class $O \in B(\Wtilde_G)$ is straight if and only if it contains a basic $\sigma$-conjugacy class $O' \in B(\Wtilde_M)$ for some standard Levi subgroup $M \subset G$. 
 \end{proposition}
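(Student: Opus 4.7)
My plan would start from the well-known length characterization of $\sigma$-straight elements: $x \in \Wtilde_G$ is $\sigma$-straight if and only if $\ell_G(x) = \langle 2\rho_G, \nu_{x,\dom}\rangle$, where $\nu_{x,\dom}$ is the dominant Newton point of $x$ viewed in $G(L)$. The inequality $\ell_G(x) \geq \langle 2\rho_G, \nu_{x,\dom}\rangle$ is always valid (essentially because $\ell_G(x^n) \geq \langle 2\rho_G, n\nu_{x,\dom}\rangle$ and $\ell_G(x^n) \leq n \ell_G(x)$ with equality iff straightness holds). Both directions of the proposition are then reduced to computations of lengths and Newton points.

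For the easy direction, suppose $x \in \Omega_M$ for some standard Levi $M$. Then $x$ fixes the base alcove of $M$, so $\ell_M(x) = 0$, and the Newton point $\nu_x$ lies in the center of $X_\ast(Z_M)_\QQ$, hence is orthogonal to every root of $M$. Splitting the explicit length formula for $t^\mu w \in \Wtilde_G$ into a sum over roots of $M$ (contributing $\ell_M(x) = 0$) and roots $\alpha \notin \Phi_M$ (on which $\langle\nu_{x,\dom},\alpha^\vee\rangle$ has constant sign), a direct calculation yields $\ell_G(x) = \langle 2\rho_{G/M}, \nu_{x,\dom}\rangle = \langle 2\rho_G, \nu_{x,\dom}\rangle$, where the last equality uses $M$-centrality of $\nu_x$. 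By the characterization, $x$ is $\sigma$-straight.

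For the hard direction, suppose $x \in \Wtilde_G$ is $\sigma$-straight with Newton point $\nu$, and put $M = \Cent_G(\nu)$, a standard Levi. I would argue in two steps. Step (i): $\sigma$-conjugate $x$ into $\Wtilde_M$. Since $x$ is straight it is of minimal length in its $\sigma$-conjugacy class; combined with the fact that conjugation by a simple reflection $s_\alpha$ with $\alpha \notin \Phi_M$ never increases the length, one can inductively produce a $\sigma$-conjugate $x'$ with a reduced expression supported on $W_M$ and translations, i.e.\ $x' \in \Wtilde_M$. The criterion of Görtz--Haines--Kottwitz--Reuman on $P$-alcove elements is the natural tool here. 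Step (ii): inside $\Wtilde_M$, observe that $\nu_{x'} = \nu$ is central in $M$, so $\langle 2\rho_M, \nu_{x',\dom}^M\rangle = 0$. Straightness in $\Wtilde_G$ restricts to straightness in $\Wtilde_M$ (comparing the two length identities using that all roots outside $\Phi_M$ contribute equally to both), hence $\ell_M(x') = 0$, which forces $x' \in \Omega_M$.

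The main obstacle is Step (i): showing that an arbitrary $\sigma$-straight element admits a $\sigma$-conjugate lying in $\Wtilde_M$. Length-minimality alone gives no immediate control over \emph{which} simple reflections appear in a reduced expression of $x$, so one needs a delicate combinatorial argument — e.g.\ by induction on $\ell_G(x)$, exhibiting at each stage either a reduction via conjugation by some $s_\alpha$ with $\alpha \notin \Phi_M$ (which strictly removes an offending simple reflection from a reduced expression) or verifying that the element is already in $\Wtilde_M$. Getting the induction to terminate without losing straightness is the heart of the argument.
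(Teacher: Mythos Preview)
The paper does not prove this proposition: it is quoted verbatim from He--Nie \cite[Prop.~3.2]{HN14} and used as a black box. So there is no proof in the paper to compare your proposal against; what follows compares your outline to the He--Nie argument itself.

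Your overall architecture is the right one and matches He--Nie: the length characterisation $\ell(x)=\langle 2\rho,\nu_{x,\dom}\rangle$, the easy direction via an explicit length count for $x\in\Omega_M$, and the hard direction via reduction to a Levi followed by the observation that $M$-centrality of $\nu$ forces $\ell_M(x')=0$. Two minor points: you write $\ell_G(x^n)$ where the definition in the paper requires $\ell(x\sigma(x)\cdots\sigma^{n-1}(x))$, so your formulas are literally correct only in the split case (which is in fact the only case the paper later uses). Also, in Step~(ii) you need $x'$ itself to be $\sigma$-straight, but $\sigma$-conjugation does not preserve straightness; you must arrange in Step~(i) that the conjugations never increase length, so that $x'$ still has minimal length in its class.

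The substantive gap is in Step~(i). The assertion that ``conjugation by a simple reflection $s_\alpha$ with $\alpha\notin\Phi_M$ never increases the length'' is false in general, and the $P$-alcove criterion of G\"ortz--Haines--Kottwitz--Reuman does not by itself produce a $\sigma$-conjugate in $\Wtilde_M$. He--Nie's actual mechanism is different: they run a Geck--Pfeiffer style \emph{partial $\sigma$-conjugation} argument (cyclic shifts by simple reflections that weakly decrease length), which eventually lands every minimal-length element in some $\Wtilde_{M'}$ where it has finite $\sigma$-twisted order in the finite Weyl group part, and then identify $M'$ with the centraliser Levi. Your inductive sketch (``remove an offending simple reflection at each stage'') is morally in this spirit, but as written it does not terminate: conjugating by $s_\alpha$ can reintroduce other simple reflections outside $\Phi_M$, so one needs the more refined bookkeeping that He--Nie carry out. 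You have correctly located the difficulty; what is missing is precisely that bookkeeping.
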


 Finally, by {\cite[Thm.~3.3]{HN14}} each $[b] \in B(G)$ contains a unique straight $O_{[b]} \in B(\Wtilde_G)$.

 We obtain the following description of $\lambda_G$ in the split case.

 \begin{proposition} \label{prop definition of lambda}
  Let $G$ be a split group over $\Ocal_F$, let $b \in G(L)$ and let $x \in O_{[b]}$ be a $\sigma$-straight element. Denote by $\lambda'$ its image under the canonical projection $\Wtilde_G \to X_\ast(T)$. Then $\lambda'_\dom = \lambda_G(b)_\dom$.
 \end{proposition}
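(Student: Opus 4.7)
The plan is to reduce to the basic case via Proposition \ref{prop red to basic}, and then verify the equality explicitly using the construction of $\lambda_G(b)$ from the proof of Lemma/Definition \ref{lemdef}. By that proposition, the straight class $O_{[b]}$ contains a basic $\sigma$-conjugacy class $O' \in B(\tilde W_M)$ for some standard Levi $M \subset G$. I would choose $M$ to be the centraliser of $\nu_G(b)$; then $\nu_M(b) = \nu_G(b)$ is $G$-dominant, and Lemma \ref{lemma lambda}(b) yields $\lambda_G(b) = \lambda_M(b)$. Before replacing $x$ by an element of $O'$, one must check that the dominant $W$-orbit representative $\lambda'_\dom$ is the same for every $\sigma$-straight element in $O_{[b]}$; this follows from the proof of \cite[Thm.~3.3]{HN14}, which connects any two $\sigma$-straight elements of the class via length-preserving conjugations that act on translation parts by Weyl-group permutations.

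For a basic representative $y = t_{\lambda'} w \in \Omega_M$ the Newton point is
\[
 \nu_M(b) = \tfrac{1}{n}\sum_{i=0}^{n-1} w^i \lambda', \qquad n = \operatorname{ord}(w).
\]
In particular $\nu_M(b)$ lies in the convex hull of the $W_M$-orbit of $\lambda'$, so the $M$-dominant representative of $\lambda'$ is $\succeq_M \nu_M(b)$. The goal is then to show $\lambda'$ is $W_M$-conjugate to $\lambda_M(b)$, which gives $\lambda'_\dom = \lambda_M(b)_\dom = \lambda_G(b)_\dom$. Both $\lambda'$ and $\lambda_M(b)$ have the same image in $\pi_1(M)$ (equal to $\kappa_M(b)$), so they differ by an element of the coroot lattice of $M$. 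Applying the explicit formula
\[
 \lambda_M(b) = \lambda' - \sum_{\hat\beta} \bigl\lceil \langle \lambda' - \nu_M(b), \omega_{\hat\beta}^\vee \rangle \bigr\rceil \cdot \hat\beta
\]
from the proof of Lemma/Definition \ref{lemdef} (the sum ranging over positive simple roots of $\hat M$), one verifies that this correction is realised by an element of $W_M$. The key input is the restrictive form of translation parts arising from $\Omega_M$, whose elements permute the vertices of the $M$-alcove and whose translations are determined by the image in $\pi_1(M)$ together with this permutation.

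The main obstacle will be this last step. Verifying that the non-negative integer combination of simple coroots of $M$ appearing in the formula is of the form $\lambda' - u \lambda'$ for some $u \in W_M$ requires explicit analysis of the basic element $y$, either through a reduction to simple quotients of $M$ or a uniform treatment via the action of $\Omega_M$ on the affine Dynkin diagram. An alternative route, which bypasses this $W_M$-conjugacy question, is to establish a dual characterisation: $\lambda_G(b)_\dom$ is the unique $G$-dominant coweight $\lambda$ with $\kappa_G(\lambda) = \kappa_G(b)$, $\lambda \succeq \nu_G(b)$, and $\langle \lambda - \nu_G(b), \omega_i \rangle < 1$ for every fundamental weight $\omega_i$ of $G$ (uniqueness follows since the pairings of the difference of two such elements with every $\omega_i$ are simultaneously integral and strictly in $(-1,1)$, forcing vanishing), and then to check these three conditions for $\lambda'_\dom$ directly using the $\sigma$-straight length identity $\ell(x) = \langle \nu_G(b), 2\rho \rangle$.
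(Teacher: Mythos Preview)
Your reduction to the basic case via Proposition~\ref{prop red to basic} and Lemma~\ref{lemma lambda} matches the paper's. (For the independence of $\lambda'_\dom$ from the choice of straight element, the paper cites \cite[Prop.~4.5]{HN15}, which gives $W$-conjugacy of any two such elements directly; your appeal to the cyclic-shift argument from \cite{HN14} requires more care, since conjugation by the affine simple reflection $s_0$ does not in general act on translation parts through a finite Weyl element.)

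The genuine gap is in the basic case. Your main approach is left as an acknowledged obstacle without a concrete argument. Your alternative route asserts a characterisation of $\lambda_G(b)_\dom$ by the conditions $\lambda \succeq \nu_G(b)$ and $\langle \lambda - \nu_G(b), \omega_i \rangle < 1$, but this is not the defining property of $\lambda_G(b)$ (which instead reads $\langle \lambda_G(b) - \nu_G(b), \omega_i \rangle \in (-1,0]$), and passing to the dominant $W$-conjugate does not obviously convert one into the other. Even granting the characterisation, the identity $\ell(x) = \langle \nu_G(b), 2\rho \rangle$ only bounds $\sum_i \langle \lambda'_\dom - \nu_G(b), \omega_i \rangle$ via $\rho = \sum_i \omega_i$, not the individual terms. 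The paper's argument is cleaner and avoids all of this: for $x \in \Omega_G$ the translation part $\lambda'$ is automatically the unique dominant \emph{minuscule} coweight with $w(\lambda') = \kappa_G(b)$, by \cite[Ch.~VI \S~2 Prop.~6]{bourbaki68}; one then checks that $\lambda_G(b)$ is also minuscule by passing to the adjoint group via Lemma~\ref{lemma lambda}~(1) and inspecting each irreducible root system. Since minuscule coweights with a given image in $\pi_1(G)$ form a single $W$-orbit, the equality $\lambda'_\dom = \lambda_G(b)_\dom$ is then immediate. The key word you are missing is ``minuscule''.
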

 \begin{proof}
  By Proposition~\ref{prop red to basic} there exists a standard Levi subgroup $M \subset G$ and an $M$-basic element $x_M \in \Omega_M$ such that $x$ and $x_M$ are $\Wtilde_G$-conjugate. By \cite[Prop.~4.5]{HN15} any two such elements are even $W$-conjugate and thus correspond to the same element in $X_\ast(T)_{\dom}$. Since the same holds true for $\lambda_G(b)_\dom$ by Lemma~\ref{lemma lambda}, it suffices to prove the proposition in the basic case, i.e.\ when $\nu_G(b)$ is central. 
  
  If $[b]$ is basic, then $x$ is basic, thus $\lambda'$ is the (unique) dominant minuscule character with $w(\lambda') = \kappa_G(b)$ (cf.~\cite[Ch.~VI \S~2 Prop.~6]{bourbaki68}). Hence it suffices to show that $\lambda_G(b)$ is minuscule. By Lemma~\ref{lemma lambda}~(2) we may assume that $G$ is of adjoint type. This leaves finitely many cases, which can easily be checked using the explicit description of root systems in \cite{bourbaki68}.
 \end{proof}

 \section{Equidimensionality}\label{secequidim}

 While it is conjectured that $X_\mu(b)$ is equidimensional (cf. \cite[Conj.~5.10]{rapoport05}), this has not yet been proven in all cases. We give a partial result after reviewing the necessary geometry of $X_\mu(b)$ first.

 \subsection{Connected components} \label{sect connected components}
 
 Let $w_G\colon  G(L) \to \pi_1(G)$ be the Kottwitz homomorphism, as considered in \cite{kottwitz85}, compare Section \ref{sec21}. It induces a map $\Grass_G(k) \to \pi_1(G)$. After base change to $\Spec k$, this induces isomorphisms $\pi_0(LG_{k}) \cong \pi_0(\Grass_{G,k}) \cong \pi_1(G)$, compare Pappas and Rapoport \cite[Thm.~0.1]{PR08} in the equal characteristic case and Zhu \cite[Prop.~1.21]{zhu} in the mixed characteristic case. Here we used that as $G$ is unramified, the action of the inertia subgroup of the absolute Galois group of $F$ on $\pi_1(G)$ is trivial.
 
For $\omega \in \pi_1(G)$, we let $LG^\omega$ and $\Grass_G^\omega$ be the corresponding connected components. Denote for any subgroup $H \subset LG_k$ and subscheme $X \subset \Grass_{G,k}$ the intersection $H^\omega \coloneqq H \cap LG^\omega$ and $X^\omega \coloneqq X \cap \Grass_G^\omega$.

 In particular, $X_\mu(b)^\omega$ is a union of connected components, and the $J_b(F)$-orbit of $X_\mu(b)^\omega$ equals $X_\mu(b)$ by \cite[Thm.~1.2]{Nie} (see also \cite[Thm.~1.2]{CKV15}) whenever $X_{\mu}(b)^\omega$ is non-empty. One can even show that under some mild condition on the triple $(G,[b],\mu)$ every connected component of $X_\mu(b)$ is of the form $X_\mu(b)^\omega$ (\cite[Thm.~1.1]{Nie}, see also \cite[Thm.~1.1]{CKV15}), but we will not need this result. 
 
 The following general result on affine flag varieties is formulated in greater generality than needed in this paper. We will only apply it in the case where $H = G$ is a reductive group scheme. For consistency we denote affine flag varieties by the same symbol $\Grass$ as affine Grassmannians.
 
 \begin{proposition} \label{prop isom components of affine flag varieties}
  Let $f\colon H' \to H$ be a morphism of parahoric group schemes over $\Ocal_F$ such that the induced homomorphism on their adjoint groups is an isomorphism. Then the induced morphism on connected components of affine flag varieties
  \[
   f_{\Grass}^\omega \colon \Grass_{H'}^{\omega} \to \Grass_H^{f(\omega)}
  \]
  is a universal homeomorphism.
 \end{proposition}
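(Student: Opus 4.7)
The plan is to factor $f$ as the composition $H' \epi f(H') \mono H$ and reduce to two special cases: (a) $f$ is a central isogeny, i.e.\ surjective with kernel a central diagonalizable $\Ocal_F$-group scheme; and (b) $f$ is a closed immersion whose quotient is a central torus. The hypothesis that $f^{\ad}\colon (H')^{\ad} \isom H^{\ad}$ is an isomorphism forces $\ker f$ to lie in the center of $H'$ and $H/f(H')$ to be central in $H$ (hence a torus, as it surjects onto the trivial adjoint quotient), so this factorization has the claimed form. As a preliminary step, functoriality of $\pi_1$ shows that $f_{\Grass}$ carries $\Grass_{H'}^\omega$ into $\Grass_H^{f(\omega)}$, which is what allows the statement to be formulated on fixed connected components.

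For case (a), the key step is to apply the fppf short exact sequence $1 \to \ker f \to H' \to f(H') \to 1$ over $\Ocal_L$ and $L$ and to compare the two double-coset spaces via the associated long exact sequence of non-abelian cohomology. A smoothness / Lang-type argument for the connected group scheme $f(H')$ over $\Ocal_L$ reduces the obstruction to a purely cohomological term in the central kernel. Restricting to fixed connected components absorbs exactly this obstruction, so $f_{\Grass}^\omega$ becomes bijective on $k$-points. Combined with radicialness of the morphism and its properness on each component, this upgrades to the universal homeomorphism.

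For case (b), the short exact sequence $1 \to f(H') \to H \to T \to 1$ induces a fibration $\Grass_{f(H')} \to \Grass_H \to \Grass_T$ of ind-schemes. Since $\Grass_T$ is discrete (the constant ind-scheme $X_\ast(T)$, resp.\ its perfection in mixed characteristic), restricting to a single connected component of $\Grass_H$ picks out a single point of $\Grass_T$, and the fiber over this point maps isomorphically onto the given component of $\Grass_H$. Compatibility with the $\pi_1$-indexing of connected components then identifies this fiber with the corresponding $\Grass_{f(H')}^\omega$.

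The main obstacle I anticipate is case (a) in the unequal characteristic setting: the central kernel may be an infinitesimal group scheme of multiplicative type (such as $\mu_p$), whose loop group is non-reduced. The induced map on affine Grassmannians is then only bijective on geometric points without being an isomorphism of schemes, which is precisely why one must work with the perfection. Passing to the perfection kills these infinitesimal obstructions and converts the bijection on $k$-points into an honest universal homeomorphism of underlying topological spaces.
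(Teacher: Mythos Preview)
Your overall strategy---reduce to two special cases via a factorization of $f$---matches the paper's, but the factorization you choose is different and this matters. The paper does \emph{not} factor through the image $f(H')$; instead it uses the diagram
\[
 H' \hookleftarrow H'_{\rm der} \twoheadleftarrow \tilde{H}' = \tilde{H} \twoheadrightarrow H_{\rm der} \hookrightarrow H,
\]
so the two special cases are (1) the inclusion of the derived subgroup and (2) the simply connected cover of a semisimple group. The paper also begins with a clean reduction you gloss over: since $f_{\Grass}^\omega$ is a morphism of ind-proper ind-schemes, it is universally closed, so it suffices to prove bijectivity on geometric points; homogeneity then reduces to $\omega = 0$.

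The reason the paper prefers the simply connected cover to your arbitrary ``central isogeny'' is that it makes case (2) concrete: the maximal torus $\tilde{T}$ of $\tilde{H}$ is an \emph{induced} torus, so the finite central kernel $Z$ automatically satisfies $Z(M) \subset \tilde{T}^0(\Ocal_M)$, which gives injectivity on points directly; surjectivity is then a lemma from \cite{PR08}. Your ``Lang-type argument'' and ``cohomological obstruction absorbed by connected components'' are plausible heuristics, but for a general central diagonalizable kernel (possibly a non-split torus, possibly $\mu_p$ in bad characteristic) you would have to actually compute the relevant fppf $H^1$ terms over $L$ and $\Ocal_L$ and match them with $\pi_1$; the paper's route sidesteps this entirely. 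Likewise, in your case (b) the quotient $H/f(H')$ need not be a torus (only diagonalizable), and the ``fibration'' $\Grass_{f(H')} \to \Grass_H \to \Grass_T$ requires surjectivity of $H(L) \to T(L)$, which is not automatic without a cohomological vanishing you do not supply.

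Finally, your last paragraph slightly misidentifies where perfection enters. In mixed characteristic the Witt vector affine Grassmannian is perfect by construction, and there universal homeomorphisms are already isomorphisms. The genuine obstruction to getting more than a universal homeomorphism occurs in \emph{equal} characteristic when $p$ divides $|\pi_1(H_{\rm der})|$; perfection is not invoked to fix this---the statement simply stops at ``universal homeomorphism'' rather than ``isomorphism'' in that case.
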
 
 
 \begin{proof}
  This is proven in \cite[\S~6]{PR08} if $\cha F = p$ and $p$ does not divide the order of $\pi_1(H'_{\rm der})$ or $\pi_1(H_{\rm der})$ (see also \cite[Prop.~4.3]{HZ} for the statement if $\cha F = 0$). We briefly recall the proof in \cite{PR08} and explain how to generalise it.
  
  Note that it suffices to show that $f_{\Grass}^\omega$ is bijective on geometric points. Indeed, it is a morphism of ind-proper ind-schemes (\cite[Cor.~2.3]{Ri16} if $\cha F = p$, \cite[\S~1.5.2]{zhu} if $\cha F = 0$) and thus universally closed.
  
  By homogeneity under the action of $H'(L)$ (resp. $H(L)$) we may assume $\omega = 0$. Denote by $H_{\rm der}$ the derived group of $H$ and by $\tilde{H}$ the simply connected cover of $H_{\rm der}$. Since we have a commutative diagram
  \begin{center}
   \begin{tikzcd}
   H'\arrow[bend left]{rrrr}{f} & H'_{\rm der} \arrow[hook]{l} & \Htilde' = \Htilde \arrow[two heads]{l} \arrow[two heads]{r} & H_{\rm der} \arrow[hook]{r}  & H
   \end{tikzcd}
  \end{center}
  it suffices to prove the theorem in the following two special cases.
  
  \emph{Case 1: $H' = H_{\rm der}$}. One can show that $f_{\Grass}^0$ is universally bijective using the argument in \cite[p.~144]{PR08}.
  
  \emph{Case 2: $H$ is semisimple and $H' = \Htilde$}. The following argument can be found in \cite[p.~140-141]{PR08}. Fix an algebraically closed field $l \supset k$ and let $M\supset L$ be the corresponding field extension of ramification index $1$. We denote by $Z$ the kernel of $\Htilde \to H$ and let $T$ and $\Ttilde$ denote the N\'eron models of fixed maximal tori in $H_F$ and $\Htilde_F$ satisfying $\Ttilde_F = f^{-1}(T_F)$. Since $\Htilde_F$ is simply connected, $\Ttilde_F$ is an induced torus, i.e.\ there exist finite field extensions $F_i/F$ such that
   \[
    \Ttilde^0 \cong \prod_i \Res_{\Ocal_{F_i}/\Ocal_F} \GG_m,
   \]
  thus there exists an $n \in \NN$ such that
   \[
    Z_F \subset \prod_i \Res_{F_i/F} \mu_n.
   \]
   In particular, we have $Z(M) \subset \Ttilde^0(\Ocal_M)$. Since $\Ttilde^0 \subset \Htilde$, $f_{\Grass}^0$ is injective on geometric points. The surjectivity is a direct consequence of \cite[Appendix, Lemma~14]{PR08}.
 \end{proof} 
 
 \begin{remark}
  If $\cha F = p$ and $p$ does not divide the order of $\pi_1(H'_{\rm der})$ or $\pi_1(H_{\rm der})$, it is shown in \cite[\S~6]{PR08} that $f_{\Grass}^\omega$ even induces an isomorphism of the underlying reduced ind-schemes. However, they show in \cite[Ex.~6.4]{PR08} that this is not necessarily the case when we drop the condition on $p$. On the other hand $f_{\Grass}^\omega$ is always an isomorphism in the case $\cha F = 0$, since universal homeomorphisms of perfect schemes are isomorphisms by \cite[Lemma~3.8]{BS}.
 \end{remark}

 Let $G^\ad$ be the adjoint group of $G$. We denote by a subscript ``ad'' the image of an element of $G(L)$, $X_\ast(T)$ or $\pi_1(G)$ in $G^\ad(L), X_\ast(T^\ad)$ or $\pi_1(G^\ad)$, respectively. By \cite[Cor. 2.4.2]{CKV15}, the homeomorphism of Proposition~\ref{prop isom components of affine flag varieties} induces a universal homeomorphism
 \begin{equation} \label{eq isom adj}
  X_\mu(b)^\omega \to X_{\mu_\ad}(b_\ad)^{\omega_\ad}
 \end{equation}
 whenever $X_\mu(b)^\omega$ is non-empty.

 \subsection{Equidimensionality for some affine Deligne-Lusztig varieties}\label{subsecequi}
 Equidimensionality is known to hold in the following cases.

 \begin{theorem}
 Let $G,b,\mu$ be as above.
  \begin{subenv}
   \item If $\cha F = p$, then $X_{\mu}(b)$ and $X_{\preceq \mu}(b)$ are equidimensional.  Furthermore, $X_{\preceq \mu}(b)$ is the closure of $X_{\mu}(b)$.
   \item Let $F$ be an unramified extension of $\QQ_p$, and let $G$ be classical, $\mu$ minuscule, and either $p \not=2$ or all simple factors of $G^\ad$ of type $A$ or $C$. Then $X_\mu(b)$ is equidimensional.
  \end{subenv}
 \end{theorem}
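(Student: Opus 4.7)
The approach is to first reduce to the adjoint case and then invoke existing equidimensionality results from the literature. Using the universal homeomorphism $X_\mu(b)^\omega \to X_{\mu_\ad}(b_\ad)^{\omega_\ad}$ just constructed, equidimensionality of each connected component transfers between $G$ and $G^\ad$. A further product decomposition reduces us to the case where $G$ is $F$-simple adjoint, in which setting the known results are cleanest to state.

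For part (1), in equal characteristic, equidimensionality of $X_\mu(b)$ for an unramified $G$ is essentially due to Hartl--Viehmann and is proved by the same moduli-theoretic argument referenced in the proof of the introductory Lemma, namely via the description of $X_\mu(b)$ as a bounded substack of the moduli stack of local $G$-shtukas. The statement about $X_{\preceq\mu}(b)$ then reduces to two ingredients. First, the standard closure relation in the affine Grassmannian, $\overline{K\mu(\epsilon)K}/K = \bigsqcup_{\mu'\preceq\mu} K\mu'(\epsilon)K/K$, implies that $X_{\preceq\mu}(b)$ is closed in $\Grass_G$ and agrees with the set-theoretic disjoint union $\bigsqcup_{\mu'\preceq\mu,\,[b]\in B(G,\mu')} X_{\mu'}(b)$. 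Second, the dimension formula $\dim X_{\mu'}(b) = \langle \rho, \mu' - \nu_b\rangle - \tfrac{1}{2}\defect_G(b)$ is strictly maximised at $\mu' = \mu$ among those $\mu' \preceq \mu$ with $[b] \in B(G,\mu')$; this forces every lower stratum into $\overline{X_\mu(b)}$ and yields equidimensionality of the closed variety.

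For part (2), in mixed characteristic with $\mu$ minuscule and $G$ classical, the strategy is to identify a connected component of $X_\mu(b)$ with the perfection of the reduced underlying scheme of a suitable Rapoport--Zink space attached to a PEL Shimura datum. The hypotheses on $p$ and the permitted simple factor types (types $A$ and $C$ for all $p$; types $B$ and $D$ only for $p \ne 2$) are precisely those for which the classification of Kottwitz guarantees the existence of such a PEL embedding. The Rapoport--Zink uniformisation theorem combined with the Mantovan almost product structure then reduces equidimensionality of $X_\mu(b)$ to equidimensionality of the corresponding Newton stratum in the special fibre of the associated PEL Shimura variety, which is a theorem of Hamacher.

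The main obstacle is not a single hard step, but rather the careful bookkeeping: one must check that the reductions to adjoint and to $F$-simple preserve the constraints on $p$ and type in (2), and that the dimension formula and closure relations invoked in (1) remain valid in the generality claimed. Once the correct dictionary with prior work is set up, both parts reduce to citations of Hartl--Viehmann (in equal characteristic) and of Hamacher together with the Rapoport--Zink/Mantovan uniformisation machinery (in mixed characteristic).
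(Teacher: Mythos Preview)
Your proposal for part (1) contains a genuine gap in the argument for the closure relation and the equidimensionality of $X_{\preceq\mu}(b)$. You assert that the strict maximisation of the dimension formula at $\mu'=\mu$ ``forces every lower stratum into $\overline{X_\mu(b)}$ and yields equidimensionality of the closed variety''. This implication is false as stated: knowing that $X_\mu(b)$ is equidimensional and that each $X_{\mu'}(b)$ with $\mu'\prec\mu$ has strictly smaller dimension does not preclude some $X_{\mu'}(b)$ from lying in a separate, low-dimensional closed piece of $X_{\preceq\mu}(b)$ disjoint from $\overline{X_\mu(b)}$. The logical order must be reversed: one first proves that $X_{\preceq\mu}(b)$ is equidimensional, and \emph{then} the dimension drop for the lower strata forces $X_\mu(b)$ to be dense.

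The paper establishes equidimensionality of $X_{\preceq\mu}(b)$ by a local argument at an arbitrary closed point $x$, not by a global moduli description. One identifies the formal completion of $X_{\preceq\mu}(b)$ at $x$ (up to a smooth factor) with the minimal Newton stratum in the deformation space of the local $G$-shtuka at $x$, via results of Viehmann--Wu extending Hartl--Viehmann beyond the split case. Strong purity of the Newton stratification on this deformation space (reduced to $\GL_n$ and then transferred) yields the exact dimension of that Newton stratum, and hence $\dim X_{\preceq\mu}(b)^\wedge_x = \dim X_{\preceq\mu}(b)$ for every $x$. This is the step your outline is missing; the Hartl--Viehmann reference you invoke only covers split $G$, and in any case does not by itself give the closure statement.

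For part (2) your outline is in the right spirit but differs from the paper in two respects. First, the paper does not pass through Newton strata of Shimura varieties via Mantovan's product structure; it uses directly that the perfection of a Rapoport--Zink space of Hodge type is $X_\mu(b)$ (Zhu), together with Hamacher's equidimensionality of such RZ spaces. Second, for $p\neq 2$ and classical $G$ one needs Hodge-type (not merely PEL) RZ spaces to cover types $B$ and $D$; the restriction to types $A$ and $C$ when $p=2$ is precisely because only PEL-type spaces are then available. Your ``PEL embedding via Kottwitz'' would not suffice for the full statement at $p\neq 2$. The reduction to the adjoint group and the restriction-of-scalars step for general unramified $F/\QQ_p$ are handled as you indicate.
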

 \begin{proof}
  Assume first that $\cha F = p$. In the case where $G$ is split the assertion is proven in \cite[Cor.~6.8]{HV12} by identifying the formal neighbourhood of a closed point in the affine Deligne-Lusztig variety with a certain closed subscheme in the deformation space of a local $G$-shtuka. We briefly explain how to generalise the arguments in the proof of \cite[Cor.~6.8]{HV12} to arbitrary reductive group schemes over $\Ocal_F$. 
  
  The main ingredient is the following result in \cite{VW}, generalising \cite[Thm.~6.6]{HV12}. Let $x = gK \in X_{\preceq \mu}(b)(k)$ and denote $b' \coloneqq gb\sigma(g)^{-1}$. Consider the deformation functor
  \begin{align*}
   {\Dscr}ef_{b',0}\colon  (Art/k)\to & (Sets),\\
   A\mapsto & \{\btilde\in (\overline{K\mu(\epsilon)K})(A) \textnormal{ with }\btilde_{k}=b'\}/_{\cong}
  \end{align*}
  where $\btilde \cong \btilde'$ if there is an $h\in G(A\pot{\epsilon})$ with $h_k=1$ and $h^{-1}\btilde \sigma(h)= \btilde'$. By \cite[Prop.~2.6]{VW} this functor is pro-represented by the formal completion of $K\backslash K \mu(\epsilon) K$ at $b'$. Moreover, the universal object has a unique algebraisation by \cite[Lemma~2.8]{VW}. We denote by $D_{b',0}$ the algebraisation of $(K\backslash K \mu(\epsilon) K)^\wedge_{b'}$ and by $\btilde \in LG(D_{b',0})$ a lift of the universal object. We denote by $N_{[b],0} \subset D_{b',0}$ the minimal Newton stratum, that is the set of all geometric points $\sbar\colon  \Spec k_{\sbar} \to D_{b',0}$ such that $\btilde_{\sbar}$ is $G(k_{\sbar}\rpot{\epsilon})$-$\sigma$-conjugate to $b$ (or $b'$). Since $N_{[b],0}$ is closed, we may equip it with the structure of a reduced subscheme. By \cite[Thm.~2.9,~2.11]{VW} there exists a surjective finite morphism
  \[
   \Spec k\pot{x_1,\ldots,x_{2\langle\rho_G,\nu_G(b)\rangle}} \hat\times X_{\preceq\mu}(b)^{\wedge}_x \to N_{[b],0}
  \]
  where $\rho_G$ denotes the half-sum of all absolute positive roots in $G$ and $X_{\preceq\mu}(b)^{\wedge}_x$ the algebraisation of the completion of $X_{\preceq\mu}(b)$ in $x$. In particular, we get
  \begin{align*}
   \dim N_{[b],0} &= 2\langle \rho_G, \nu_G(b)\rangle + \dim  X_{\preceq\mu}(b)^{\wedge}_x \\
                  &\leq \langle \rho_G, \mu+\nu_G(b) \rangle - \frac{1}{2} \defect_G(b).
  \end{align*}
  Here the last inequality follows from the dimension formula of $X_{\preceq \mu}(b)$ in \cite[Thm.~1.1]{hamacher15a} and equality holds if and only if $\dim X_{\preceq\mu}(b)^{\wedge}_x = \dim X_{\preceq\mu}(b)$. The Newton stratification on $D_{b',0}$ satisfies strong purity in the sense of \cite[Def.~5.8]{viehmann}. Indeed, this is shown for $G=\GL_n$ in \cite[Thm.~7]{viehmann13} and the general case follows by \cite[Prop.~2.2]{hamacher}. Thus the conditions of \cite[Lemma~5.12]{viehmann} are satisfied and we get the dimension formula and closure relations of all Newton strata in $D_{b',0}$. In particular,
  \[
   \dim N_{[b],0} = \langle \rho_G, \mu+\nu_G(b) \rangle - \frac{1}{2} \defect_G(b).
  \]
  Thus $\dim X_{\preceq\mu}(b)^\wedge_x = \dim X_{\preceq\mu}(b)$ and since $x$ was an arbitrary closed geometric point of $X_{\preceq\mu}(b)$, this implies equidimensionality. Since $\dim X_{\preceq\mu'}(b) < \dim X_{\preceq\mu}(b)$ for every $\mu' \prec \mu$ by \cite[Thm.~1.1]{hamacher15a} this also implies the equidimensionality of $X_{\preceq\mu}(b)$ and that $X_\mu(b)$ is dense in $X_{\preceq\mu}(b)$.
  
  Now consider $F=\QQ_p$, $p \not= 2$ and assume first that there exists a faithful representation $\rho\colon  G \mono \GL_n$ such that the action of $\GG_m$ via $\rho(\mu)$ has weights $0$ and $1$. Then we can associate a Rapoport-Zink space of Hodge-type $\Mscr_{G,\mu}(b)$ to the triple $(G,\mu,b)$, whose perfection equals $X_\mu(b)$ by \cite[Thm.~3.10]{zhu}. Since $\Mscr_{G,\mu}(b)$ is equidimensional by \cite[Thm.~1.3]{hamacher}, so is $X_\mu(b)$. 

  Now the morphism $X_\mu(b) \to X_{\mu_\ad}(b_\ad)$ induced by the the canonical projection $G \epi G^\ad$ is an isomorphism on connected components  by (\ref{eq isom adj}). Thus all connected components of $X_{\mu_\ad}(b_\ad)$ which are contained in the image of $X_\mu(b)$ are equidimensional. Since all connected components are isomorphic to each other by \cite[Thm.~1.2]{CKV15}, this implies that $X_{\mu_\ad}(b_\ad)$ is equidimensional. Thus any affine Deligne-Lusztig variety with $G$ classical, adjoint and $\mu$ minuscule is equidimensional. Applying (\ref{eq isom adj}) once more, the claim follows for $p \not=2$.  If $p=2$, the spaces $\Mscr_{G,\mu}(b)$ are only defined if $(G,\mu,b)$ is of PEL-type, but in this case the rest of the proof is identical.

  If $F$ is an unramified field extension of $\QQ_p$, let $G' = \Res_{O_F/\ZZ_p} G$ and $\mu' = (\mu,0, \ldots, 0), b' = (b,1,\ldots ,1)$ with respect to the identification $G'_L \cong \prod_{F \mono L} G$. By \cite[Lemma~3.6]{zhu} and the Cartesian diagram below it, we have $X_{\mu'}(b') \cong X_\mu(b)$. Thus $X_\mu(b)$ is equidimensional.
 \end{proof}

\section{Irreducible components in the superbasic case}\label{secsb}
In this section we prove Theorem \ref{thm_main} for superbasic $\sigma$-conjugacy classes. In \cite[\S~8]{hamacher15a} this has been reduced to a purely combinatorial statement, which we prove using the bijectivity of sweep maps on rational Dyck paths.

\subsection{Superbasic $\sigma$-conjugacy classes}

An element $b\in G(L)$ or the corresponding $\sigma$-conjugacy class $[b]\in B(G)$ is called superbasic if no element of $[b]$ is contained in a proper Levi subgroup of $G$ defined over $F$.

\begin{remark}[{\cite[\S~3.1]{CKV15}}]\label{remsb}

\begin{enumerate}
\item If $b$ is superbasic in $G(L)$ then the simple factors of the adjoint group $G^{\ad}$ are of the form $\Res_{F_{d}\mid F}\PGL_{n}$ for unramified extensions $F_{d}$ of $F$ (of degree $d$) and $n\geq 2$. In particular, $X_\mu(b)$ is equidimensional if $\cha F = p$ or $F$ is an unramified extension of $\QQ_p$.
\item For every $[b]\in B(G)$ there is a standard parabolic subgroup $P\subset G$ defined over $F$ and with the following property. Let $T$ be a fixed maximal torus of $G$, and $M$ the Levi factor of $P$ containing $T$. Then there is a $b\in [b]\cap M(L)$ which is superbasic in $M$.
\end{enumerate}
\end{remark}

We first consider the special case where $[b]$ is superbasic and where $G$ is of the form $\Res_{F_{d}\mid F}\GL_{n}$ for some $d,n$. In this case we give a proof using EL-charts as in \cite{hamacher15a} (see also \cite{dJO00} for the split case). We then reduce the general superbasic case to this particular case. 

For $G$ as above $L \otimes_F F_d \cong \prod_{\tau\colon F_d \mono L} L$ yields an identification
 \[
  G(L) = \prod_{\tau \in I} \GL_n(L)
 \]
mapping $g\in G(L)$ to a tuple $(g_\tau)_{\tau\in I}$ where $I \coloneqq \Gal(F_d,F) \cong \ZZ/d\ZZ$. Let $S \subset T \subset B \subset G$ be the split diagonal torus, the diagonal torus and the upper triangular Borel, respectively. We have a canonical identification $X_\ast(T) \cong (\ZZ^n)^{|I|}$. Then the dominant elements in $X_\ast (T)$ are precisely the $\mu = (\mu_\tau)_{\tau \in I} \in X_\ast(T)$ such that the components of $\mu_\tau$ are weakly decreasing for each $\tau$.
 
 We identify $X_\ast(S)$ with the invariants $X_\ast(T)^I = \ZZ^n$, thus
 \[
  \restr{\mu}{\Shat} = \sum_{\tau \in I} \mu_\tau.
 \]
 Moreover, this identifies the partial order $\leq$ on $X_\ast(S)_\QQ$ with the dominance order on $\QQ^n$.

\subsection{A combinatorial identity} \label{sect sweep}

 An important tool when considering the combinatorics of EL-charts is the sweep map defined by Armstrong, Loehr and Warrington in \cite{ALW15}. We need a multiple component version of it, which turns out to be easily realised as a special case of the classical sweep map.
 
 \begin{notation}
  By a word $\wbf$ we mean a finite sequence of integers $w_1 \cdots w_r$. For $1\leq k\leq r$ we define the level of $\wbf$ at $k$ by $l(\wbf)_k \coloneqq \sum_{i=1}^k w_i$. We consider the following sets for fixed sequences of integers $a_{\tau,1},\ldots,a_{\tau,n}$ where $ 1 \leq \tau \leq d$.
  \begin{subenv}
   \item Let $\Acal_\ZZ^{(d)}$ denote the set of words $\wbf = w_1 \dotsm w_{d\cdot n}$ such that the sub-word $\mathbf{w}_{(\tau)} \coloneqq w_\tau w_{\tau+d} \cdots w_{\tau + (n-1) \cdot d}$ is a rearrangement of $a_{\tau,1},\ldots,a_{\tau,n}$ for any $\tau\in \{1,\dotsc, d\}$.
   \item Denote by $\Acal_\NN^{(d)} \subset \Acal_\ZZ^{(d)}$ the subset of words whose level at multiples of $d$ is non-negative. Following \cite{williams} and \cite{ALW15}, we call its elements ($d$-component) Dyck words.
  \end{subenv}
 \end{notation}

 \begin{definition}
  The sweep map $sw^{(d)}\colon  \Acal_\ZZ^{(d)} \to \Acal_\ZZ^{(d)}$ is the map that sorts $\wbf$ according to its level by permuting $\wbf_{(\tau)}$ using the following algorithm. Initialise $sw^{(d)}(\wbf)_{(\tau)} = \emptyset$ for any $1 \leq \tau \leq d$. For each $a$ down from $-1$ to $-\infty$ and then down from $\infty$ to $0$ read $\wbf_{(\tau)}$ from right to left and append to $sw^{(d)}(\wbf)_{(\tau)}$ all letters $w_k$ such that $l(\wbf)_k = a$.
  \end{definition}

 We deduce the bijectivity of $sw^{(d)}$ from Williams' result for the classical sweep map in \cite{williams}.
  
 \begin{proposition} \label{prop sweep}
  $sw^{(d)}$ is bijective and preserves $\Acal_\NN^{(d)}$.
 \end{proposition}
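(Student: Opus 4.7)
The strategy is to deduce both assertions from Williams' classical ($d=1$) result. The principal obstacle is that, although $sw^{(d)}$ rearranges each subword $\wbf_{(\tau)}$ separately, the sort key is the level function $l(\wbf)_k$ of the full interleaved word, coupling the components and so preventing a naive componentwise application of Williams' theorem.

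To handle this, for each component $\tau$ I would introduce the auxiliary classical word $\wbf^{(\tau)} \in \Acal_\ZZ$ determined by requiring $l(\wbf^{(\tau)})_j = l(\wbf)_{\tau + (j-1)d}$, so that its levels reproduce the level profile of $\wbf$ restricted to the component-$\tau$ positions. A direct comparison of the two definitions---both sweeps sort by the same level sequence in the order $-1, -2, \ldots, -\infty, \infty, \ldots, 0$, with right-to-left tiebreaking within each level class---shows that $sw^{(d)}(\wbf)_\tau$ is obtained from $\wbf_{(\tau)}$ by the same permutation $\pi_\tau \in S_n$ that Williams' classical sweep $sw$ applies to $\wbf^{(\tau)}$. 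By Williams' theorem, $\wbf^{(\tau)}$, and hence $\pi_\tau$, is uniquely determined by $sw(\wbf^{(\tau)})$, so the data of $\wbf$ is equivalent to the family of pairs $\{(\wbf_{(\tau)}, \wbf^{(\tau)})\}_\tau$.

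Bijectivity of $sw^{(d)}$ then reduces to reconstructing the level profiles $\wbf^{(\tau)}$ from the output $sw^{(d)}(\wbf)$ alone, which I would do by an inductive sweep-level peeling paralleling Williams' inverse sweep: at each stage identify, across all components simultaneously, the positions at the current level value (starting with $a = -1$, descending to $-\infty$, then $a = +\infty$, descending to $0$), use the currently known letters to update the cumulative sums, and proceed to the next level. Preservation of $\Acal_\NN^{(d)}$ follows in the same spirit: the condition $l(\wbf)_{md} \geq 0$ decomposes into a sum over components of cumulative contributions from the $\wbf^{(\tau)}$, each of which satisfies a classical Dyck condition preserved by Williams' sweep; recombining the classical preservation statements yields $l(sw^{(d)}(\wbf))_{md} \geq 0$. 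The main difficulty is the inductive level-peeling step, because the classical cumulative sums of $sw^{(d)}(\wbf)_\tau$ regarded as a stand-alone word do not coincide with the levels of $\wbf^{(\tau)}$, so one cannot simply invoke Williams' inverse on each component in isolation---the coupling via the shared level function must be tracked consistently across components throughout the reconstruction.
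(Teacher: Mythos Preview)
Your approach diverges from the paper's and, as written, has a genuine gap at exactly the point you yourself flag. The paper does \emph{not} work componentwise. Instead it fixes a single large integer $N$ and replaces $\wbf$ by a word $\wbf^{+N}$ in a classical alphabet $\Acal_\ZZ$, obtained by adding $N$ to each letter at positions $\not\equiv 0 \pmod d$ and subtracting $N(d-1)$ at positions $\equiv 0$. The effect is that $l(\wbf^{+N})_k = l(\wbf)_k + (k \bmod d)\cdot N$, so the level ranges at different residues become disjoint. Consequently the classical sweep $sw$ applied to $\wbf^{+N}$ sorts each residue class among itself, by the same level-ordering that $sw^{(d)}$ uses. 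This yields an injection $\Acal_\ZZ^{(d)} \hookrightarrow \Acal_\ZZ$ intertwining $sw^{(d)}$ with $sw$; since $\Acal_\ZZ^{(d)}$ is finite and $sw$ is bijective by Williams, $sw^{(d)}$ is bijective. Dyck preservation is likewise inherited: for $\wbf\in\Acal_\NN^{(d)}$ the shifted word $\wbf^{+N}$ is a classical Dyck word, and Williams' preservation then does the work.

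Your proposal, by contrast, introduces auxiliary words $\wbf^{(\tau)}$ whose letter multisets depend on $\wbf$ itself (each letter is a sum of $d$ consecutive letters of $\wbf$), so there is no fixed $\Acal_\ZZ$ to which Williams' bijection applies. You correctly observe that knowing $sw(\wbf^{(\tau)})$ would determine $\pi_\tau$; but $sw^{(d)}(\wbf)$ gives you only $\pi_\tau$ applied to $\wbf_{(\tau)}$, not to $\wbf^{(\tau)}$, and the passage between them is precisely the coupling you cannot undo without already knowing the levels. Your ``inductive level-peeling'' would thus not be a reduction to Williams' theorem but a re-proof of it in the coupled setting, and you give no argument that the peeling actually terminates correctly here. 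The Dyck claim is similarly unsupported: the condition $l(\wbf)_{md}\ge 0$ is equivalent to $\wbf^{(d)}\in\Acal_\NN$, but the other $\wbf^{(\tau)}$ need not be Dyck, and there is no reason the auxiliary word $(sw^{(d)}(\wbf))^{(d)}$ coincides with $sw(\wbf^{(d)})$, so ``recombining the classical preservation statements'' has no clear meaning. The paper's shift trick sidesteps all of this by collapsing the $d$ components into a single word to which Williams' results apply verbatim.
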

 \begin{proof}
  If $d=1$, the map $sw^{(1)}$ is precisely the sweep map defined in \cite{williams} and the proposition is proven in \cite[Thm.~6.1,~6.3]{williams}. In order to reduce to this case, we need to construct an injection $\Acal_\ZZ^{(d)} \mono \Acal_\ZZ^{(1)}$ which identifies Dyck words and preserves the sweep map, i.e.\ the diagram
  \begin{equation}
   \begin{tikzcd} \label{diag sweep}
    \Acal_\ZZ^{(d)} \arrow[hook]{r} \arrow{d}{sw^{(d)}} & \Acal_\ZZ^{(1)} \arrow{d}{sw^{(1)}} \\
    \Acal_\ZZ^{(d)} \arrow[hook]{r} & \Acal_\ZZ^{(1)}
   \end{tikzcd}
  \end{equation}
  commutes. Note that part of this construction is also the choice of a sequence $\{a'_1,\ldots, a'_{n\cdot d}\}$ for $\Acal_\ZZ^{(1)}$.
  
  As preparation, fix an integer $N$ big enough such that for any $\mathbf{w} \in \Acal_\ZZ^{(d)}$ and $1 \leq \tau \leq d$ as above the following inequalities hold.
  \begin{align}\label{eq sweep 1}
   \min \{l(\wbf)_{k} +N \mid k \equiv \tau \pmod d\} &> \max \{l(\wbf)_{k} \mid k \equiv \tau-1 \pmod d\} \\
   \min \{l(\wbf)_{k} + \tau\cdot N \mid k \equiv \tau \pmod d\}  &\geq 0. \label{eq sweep 2}
  \end{align}
  We now construct a map $\Acal_\ZZ^{(d)} \to \Acal_\ZZ^{(1)}, \wbf \mapsto \wbf^{+N}$ satisfying the conditions above as follows. For given $\mathbf{w}$, let $\wbf^{+N}$ be the word which one obtains by replacing $w_{\tau+(i-1)\cdot d}$ by
  \begin{align*}
   w'_{\tau+(i-1)\cdot d} &\coloneqq \begin{cases}
    w_{\tau+(i-1)\cdot d}+ N & \textnormal{if } \tau \neq d \\
    w_{\tau+(i-1)\cdot d} - N \cdot (d-1) & \textnormal{if } \tau =d
   \end{cases} 
  \shortintertext{for $1\leq i \leq n$ and $1 \leq \tau \leq d$. 
  Then $\wbf^{+N} \in \Acal_\ZZ^{(1)}$ for the choice $\{a'_1,\ldots, a'_{n\cdot d}\}$, where}  
   a'_{\tau+(i-1)\cdot d} &\coloneqq    \begin{cases}
                                 a_{\tau,i} +N & \textnormal{if } \tau \neq d \\
                                 a_{\tau,i}-N\cdot (d-1) &\textnormal{if } \tau = d.
                                \end{cases}
  \end{align*}

   The map $\wbf \mapsto\wbf^{+N}$ is obviously injective. Note that for any $k$ we have $l(\wbf^{+N})_k = l(\wbf)_k + \bar{k} \cdot N$ where $0 \leq \bar{k} \leq d-1$ denotes the residue of $k$ modulo $d$. Thus $l(\wbf^{+N})_k = l(\wbf)_k$ if $k$ is a multiple of $d$, and $l(\wbf^{+n}) \geq 0$ by (\ref{eq sweep 2}) otherwise. Hence $\wbf^{+N} \in \Acal_\NN^{(1)}$  if any only if $\wbf \in \Acal_\NN^{(d)}$. 
   
    By (\ref{eq sweep 1}), we have $\min_i l(\wbf^{+N})_{\tau +(i-1)\cdot d}  > \max_i l(\wbf^{+N})_{\varsigma+ (i-1)\cdot d}$ for all $1 \leq \tau < \varsigma \leq d-1$ or $\varsigma < \tau = d$. Thus the permutation of letters of $\wbf^{+N}$ induced by the classical sweep map decomposes into a product of permutations of the subsets $\{\tau,\tau+d,\tau+2d,\ldots \}$. Since moreover $l(\wbf^{+N})_{\tau + (i-1) \cdot d} \geq l(\wbf^{+N})_{\tau + (j-1) \cdot d}$ iff $l(\wbf)_{\tau + (i-1) \cdot d} \geq l(\wbf)_{\tau + (j-1) \cdot d}$, the permutations induced by the classical sweep map $sw^{(1)}$ applied to $\wbf^{+N}$ and $sw^{(d)}$ applied to $\wbf$ coincide. In other words, the diagram (\ref{diag sweep}) commutes.
 \end{proof}

\subsection{Characterisation of EL-charts}

 Throughout the section, we fix a positive integer $m$ coprime to $n$ and denote $\restr{\nu}{\Shat} = (\frac{m}{n},\ldots,\frac{m}{n}) \in X^\ast(\Shat) = X_\ast(T)^I$. Let $m_1,\ldots,m_d$ be arbitrary integers such that $m_1 + \dotsm +m_d=m$. We shall later make convenient choices of them depending on $\mu$. We recall the notion of EL-charts as they were presented in \cite[\S~5]{hamacher15a}.

Let $\ZZ^{(d)} \coloneqq \coprod_{\tau \in I} \ZZ_{(\tau)}$ be the disjoint union of $d$ copies of $\ZZ$.  We impose the notation that for any subset $A \subset \ZZ^{(d)}$ we write $A_{(\tau)} \coloneqq A\cap  \ZZ_{(\tau)}$. For $x\in\ZZ$ we denote by $x_{(\tau)}$ the corresponding element of $\ZZ_{(\tau)}$ and write $|a_{(\tau)}| \coloneqq a$.
 We equip $\ZZ^{(d)}$ with a partial order ``$\leq$'' defined by
 \[
  x_{(\tau)} \leq y_{(\varsigma)} \colon \Leftrightarrow \tau = \varsigma \textnormal{ and } x\leq y
 \]
 and a $\ZZ$-action given by
 \[
  x_{(\tau)} + z \coloneqq (x+z)_{(\tau)} .
 \]
 Furthermore we consider a $\ZZ$-equivariant function $f\colon \ZZ^{(d)} \to \ZZ^{(d)}$ with 
 \[
  f(a_{(\tau)}) = a_{(\tau+1)} + m_\tau
 \]
In particular, $f(\ZZ_{(\tau)}) = \ZZ_{(\tau+1)}$ and $f^d(a) = a+m$.  
 \begin{definition}
  \begin{subenv}
   \item An EL-chart is a non-empty subset $A \subset \ZZ^{(d)}$ which bounded from below and satisfies $f(A)\subset A$ and $A+n\subset A$.
   \item Two EL-charts $A,A'$ are called equivalent, if there exists an integer $z$ such that $A+z = A'$. We write $A \sim A'$.
  \end{subenv}
 \end{definition}

 Let $A$ be an EL-chart and $B = A\setminus (A+n)$. It is easy to see that $\#B_{(\tau)} = n$ for all $\tau \in I$. We define a sequence $b_0,\ldots,b_{d\cdot n}$ as follows. Let $b_0 = b_{n\cdot d} = \min B_{(0)}$ and for given $b_i$ let $b_{i+1} \in B$ be the unique element of the form
 \[
  b_{i+1} = f(b_i) - \mu'_{i+1} \cdot n
 \]
 for a non-negative integer $\mu'_i$. These elements are indeed distinct: If $b_i = b_j$ then obviously $i \equiv j \pmod d$ and then $b_{i + k\cdot d} \equiv b_i + k\cdot m \pmod n$ implies that $i=j$ as $m$ and $n$ are coprime.
 
 It will later be helpful to distinguish the $b_i$s and $\mu'_i$ of different components. For this we change the index set to $I \times \{1,\ldots,n\}$ via
 \begin{eqnarray*}
  b_{\tau,i} &\coloneqq& b_{\tau + (i-1)\cdot d} \\
  \mu'_{\tau,i} &\coloneqq&  \mu'_{\tau + (i-1)\cdot d}.
 \end{eqnarray*} 
 Here we choose the set of representatives $\{1,\ldots,d\} \subset \ZZ$ of $I$.

 \begin{definition}
   With the notation above, $\mu'$ is called the type of $A$.
 \end{definition}
 
 \begin{remark}
   This definition differs slightly from the definition of the type in \cite[p.~12822]{hamacher15a}. In this article we choose the indices such that
   $\mu'_{\tau,i}$ measures the difference between between $b_{\tau,i}$ and $b_{\tau-1,i}$ while in \cite{hamacher15a} it yields the difference between $b_{\tau,i}$ and $b_{\tau+1,i}$. Since one can alternate between those two notions by replacing $f$ by $\fbar \coloneqq f^{-1}$ and $\mu$ by $(-\mu)_{\rm dom}$, we can still use the combinatorial results of \cite{hamacher15a}. Moreover, we consider the Borel of \emph{upper} triangular matrices instead of lower triangular matrices in loc.~cit., thus inverting the order on $X_\ast(S)$ and $X_\ast(T)$.
 \end{remark}
 
 The type characterises an EL-chart up to equivalence.
 
 \begin{lemma}[{\cite[Lemma 5.3]{hamacher15a}}] \label{lem type}
  Let
  \[
   P_{m,n,d} \coloneqq \{ \mu' \in \left(\ZZ_{\geq 0}^n\right)^{|I|} \mid \restr{\mu'}{\Shat} \leq \restr{\nu}{\Shat} \}.
  \]
  Then the type of any EL-chart $A$ lies in $P_{m,n,d}$ and the type defines a bijection
  \[
   \bigslant{\{\textnormal{ EL-charts }\} }{\sim} \leftrightarrow P_{m,n,d}.
  \] 
 \end{lemma}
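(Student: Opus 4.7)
The plan is to trace through the recursion defining the type $\mu'$ and encode both well-definedness and the bijection via a single numerical identity. Let $\tau(i)\in I$ denote the component of $b_i$ and write $|\cdot|\colon \ZZ^{(d)}\to\ZZ$ for the projection forgetting the component. Unpacking the recursion $b_{i+1} = f(b_i) - \mu'_{i+1} n$ gives $|b_{i+1}| = |b_i| + m_{\tau(i)} - n\mu'_{i+1}$; telescoping along one full cycle through the components and using $\sum_\tau m_\tau = m$ yields
\[
|b_{kd}| - |b_0| \;=\; km - n\sum_{j=1}^{kd} \mu'_j \qquad (0\leq k \leq n).
\]
Re-indexing $\mu'_{\tau+(i-1)d}=\mu'_{\tau,i}$ converts the right-hand sum into $km - n\sum_{i=1}^k \restr{\mu'}{\Shat}_i$. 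This identity drives the rest of the argument.

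First I would check that the type of any EL-chart lies in $P_{m,n,d}$. The elements $b_0,b_d,\ldots,b_{(n-1)d}$ all lie in $\ZZ_{(0)}$, and their $|\cdot|$-values are pairwise distinct modulo $n$ since $|b_{kd}|-|b_0|\equiv km \pmod n$ runs through all residues once $\gcd(m,n)=1$ is used; hence these $n$ elements exhaust $B_{(0)}$. Minimality of $b_0$ in $B_{(0)}$ thus amounts to $|b_{kd}|\geq|b_0|$ for every $k$, and the relation $b_{nd}=b_0$ to equality at $k=n$. Combined with the identity above, this translates, via the characterization of the dominance order on $X_\ast(S)_\QQ$ (by partial sums bounded by those of $\restr{\nu}{\Shat}=(m/n,\ldots,m/n)$, with equality at $n$), into $\restr{\mu'}{\Shat}\leq\restr{\nu}{\Shat}$.

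For the bijection itself, surjectivity is handled by explicit construction and injectivity by inverting the recursion. Given $\mu'\in P_{m,n,d}$, I set $b_0=0_{(0)}$, define $b_1,\ldots,b_{nd-1}$ by the same recursion, and observe that the equality $\sum_{\tau,i}\mu'_{\tau,i}=m$ forces $b_{nd}=b_0$. Define $A\coloneqq \bigcup_{i=0}^{nd-1}(b_i+\NN\cdot n)$. The inclusions $A+n\subset A$ and $f(A)\subset A$ are immediate (the latter from $f(b_i)=b_{i+1}+\mu'_{i+1}n\in A$); $A$ is bounded below componentwise; and the distinctness of residues together with $|b_{kd}|\geq 0$ (forced by the dominance inequalities) imply $A\setminus(A+n)=\{b_0,\ldots,b_{nd-1}\}$ with $b_0=\min B_{(0)}$. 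Hence $A$ is an EL-chart whose type is $\mu'$. Conversely, two EL-charts with the same type have sequences $(b_i)$ and $(b'_i)$ that differ by a single global shift $z=|b'_0|-|b_0|$, so $A'=A+z$ and the two EL-charts are equivalent.

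The main obstacle is the equivalence ``$b_0=\min B_{(0)}$'' $\Longleftrightarrow$ ``$\restr{\mu'}{\Shat}\leq\restr{\nu}{\Shat}$''. The crux is that under $\gcd(m,n)=1$, the values $|b_{kd}|$ for $k=0,\ldots,n-1$ hit all residues modulo $n$, so minimality of $b_0$ really is a statement about the signs of the $|b_{kd}|$; this is exactly what the partial-sum characterization of the dominance order provides through the identity above. Once this identification is set up, the remainder is bookkeeping along the recursion.
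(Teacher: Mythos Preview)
Your argument is correct. The paper itself does not prove this lemma but cites \cite[Lemma~5.3]{hamacher15a}; the telescoping identity $|b_{kd}|-|b_0|=km-n\sum_{i\le k}(\restr{\mu'}{\Shat})_i$ together with the translation of ``$b_0=\min B_{(0)}$'' into the partial-sum form of the dominance order is precisely the mechanism behind that reference, so your write-up is essentially the intended proof.
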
 

 \begin{example} \label{ex EL-charts}
  There are two important special cases of EL-charts.
  \begin{assertionlist}
   \item An EL-chart is called small if $A+n \subset f(A)$, in other words if its type only has entries $0$ and $1$. They correspond to the affine Deligne-Lusztig varieties with minuscule Hodge point.
   \item A semi-module is an EL-chart $A \subset \ZZ$. These are the invariants that occur in the split case.
  \end{assertionlist}
  There is a bijection between small semi-modules up to equivalence and rational Dyck paths from $(0,0)$ to $(n-m,m)$, that is lattice paths allowing only steps in the north and east direction which stay above the diagonal. This gives a purely combinatorial motivation for the definitions below.
  
  The bijection is given as follows (see \cite{GM13} for more details). With a given equivalence class $[A]$ of small semi-modules, we associate the path which goes east at the $i$-th step if $\type(A)_i = 0$ and north if $\type(A)_i = 1$. By the above lemma, this map is well-defined and a bijection. Moreover, if we choose $\min A =0$, then one can recover $A$ from the Dyck path as the set of $(m,m-n)$-levels in the sense of \cite{ALW15} of points on or above the path, giving the inverse to the bijection.
 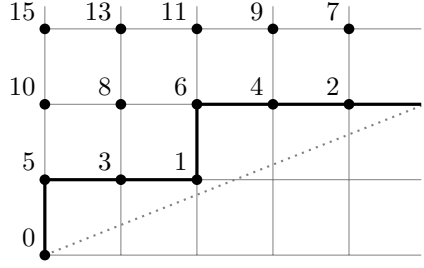
\begin{figure}[h] \label{fig N_0}
  \begin{center}
   \begin{tikzpicture}
    \draw[step=1cm,gray, ultra thin]  (0,0) grid (4.95,3.3);
        
    \draw[thick, gray, dotted] (0,0) -- (4.95,1.98) ;    
    
    \draw[very thick] (0,0) -- (0,1) ;
    \draw[very thick] (0,1) -- (2,1) ;
    \draw[very thick] (2,1) -- (2,2) ;
    \draw[very thick] (2,2) -- (4.95,2) ;

    \fill (0,0) circle (2pt) node[anchor = south east] {$0$} ;
    \fill (0,1) circle (2pt) node[anchor = south east] {$5$} ;
    \fill (0,2) circle (2pt) node[anchor = south east] {$10$} ;
    \fill (0,3) circle (2pt) node[anchor = south east] {$15$} ;
    
    \fill (1,1) circle (2pt) node[anchor = south east] {$3$} ;
    \fill (1,2) circle (2pt) node[anchor = south east] {$8$} ;
    \fill (1,3) circle (2pt) node[anchor = south east] {$13$} ;
    
    \fill (2,1) circle (2pt) node[anchor = south east] {$1$} ;
    \fill (2,2) circle (2pt) node[anchor = south east] {$6$} ;
    \fill (2,3) circle (2pt) node[anchor = south east] {$11$} ;
    
    \fill (3,2) circle (2pt) node[anchor = south east] {$4$} ;
    \fill (3,3) circle (2pt) node[anchor = south east] {$9$} ;
    
    \fill (4,2) circle (2pt) node[anchor = south east] {$2$} ;
    \fill (4,3) circle (2pt) node[anchor = south east] {$7$} ;    
    
   \end{tikzpicture}
   \caption{The associated Dyck path and $(5,-2)$-levels for $m=5$, $n=7$ and $A = \NN_0$}
  \end{center}
 \end{figure}
 \end{example}

 There is another invariant of EL-charts which is more important for the application of this theory, as it allows us to calculate the dimension of strata inside the affine Deligne-Lusztig variety.
 
 \begin{definition}
  Let $A$ be an EL-chart of type $\mu'$ and let $b_{\tau,i}$ be defined as above. For each $\tau \in I$ let $\tilde{b}_{\tau,1} >  \ldots > \tilde{b}_{\tau,n}$ be the elements of $B_{(\tau)}$ arranged in decreasing order. Define
  \[
   \tilde{\mu}_{\tau,i}  = \mu'_{\tau,i'}
  \]
  where $i$ is the unique number such that $\tilde{b}_{\tau,i} = b_{\tau,i'}$.
 We call $\tilde{\mu}$ the cotype of $A$.
 \end{definition}

 It is shown in \cite[p.~12831]{hamacher15a} that $\cotype(A) \in P_{m,n,d}$. Since the cotype is obviously invariant under equivalence, we obtain a map
 \[
  \zeta\colon  P_{m,n,d} \to P_{m,n,d}, \type(A) \mapsto \cotype(A).
 \]
 We claim that $\zeta$ is bijective. For this we note that $\zeta$ is the composition of
 \begin{equation} \label{eq zeta}
   \mu' \mapsto (w_k \coloneqq m_{k\ {\rm mod}\ d} - \mu'_k \cdot n)_{k=1,\ldots,n\cdot d} \stackrel{sw^{(d)}}{\mapsto} (\tilde{w}_k) \mapsto (\frac{m_{\tau} -\tilde{w}_{\tau+i\cdot d}}{n})_{\tau,i}.
 \end{equation}
 Thus its bijectivity follows from Proposition~\ref{prop sweep}.
 
 \begin{example}
  For $d=1, n=7, m=5$ and $A =\NN_0$, we can describe (\ref{eq zeta}) as follows. In Figure~1 one sees that $\mu' \coloneqq \type(A) = (0, 1, 1, 0, 1, 1, 1)$. This is mapped to the word $\wbf = (5, -2, -2, 5, -2, -2, -2)$, whose levels $l(\wbf) = (5, 3, 1, 6, 4, 2, 0)$ are the corresponding elements of $B \coloneqq A \setminus (A+n)$. Thus applying the sweep map, which sorts the letters of $\wbf$ according to their level, is nothing else than permuting the letters such that the corresponding elements of $B$ get arranged in decreasing order. Now $sw(\wbf) = (5, 5, -2, -2, -2, -2, -2)$, which yields $\zeta(\mu') = (0, 0, 1, 1, 1, 1, 1)$.
 \end{example}

 Altogether, we obtain the following theorem, which generalises the result of \cite[Cor.~6.4]{williams}. It was conjectured in \cite[Conj.~8.3]{hamacher15a} and in the split case by de Jong and Oort in \cite[Rem.~6.16]{dJO00}.
 \begin{theorem} \label{thm cotype}
  The cotype induces a bijection
  \[
   \bigslant{\{\textnormal{ EL-charts }\} }{\sim} \leftrightarrow P_{m,n,d}.
  \] 
 \end{theorem}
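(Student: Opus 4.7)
The plan is to deduce the theorem directly from Lemma \ref{lem type} and Proposition \ref{prop sweep} by analysing the factorisation (\ref{eq zeta}). Since by Lemma \ref{lem type} the type induces a bijection between equivalence classes of EL-charts and $P_{m,n,d}$, the statement of the theorem is equivalent to the bijectivity of the self-map $\zeta\colon P_{m,n,d}\to P_{m,n,d}$ defined by $\type(A)\mapsto \cotype(A)$.

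My approach is therefore to show that each of the three maps composing $\zeta$ in (\ref{eq zeta}) is a bijection onto its image, and that the intermediate set of words lies in $\Acal_\NN^{(d)}$, so that Proposition \ref{prop sweep} applies. The first map $\mu' \mapsto (w_k)$ with $w_k = m_{k\bmod d} - \mu'_k\cdot n$ and the third map $(\tilde w_k)\mapsto ((m_\tau-\tilde w_{\tau+id})/n)_{\tau,i}$ are both affine with respect to $\mu'_{\tau,i}$, and their bijectivity onto the appropriate subsets of $\ZZ^{n\cdot d}$ is obvious from the fact that $n\neq 0$.

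The main step is the identification of the image of the first map with $\Acal_\NN^{(d)}$ for the sequence of letters $\{m_{\tau \bmod d} - j\cdot n : 0\leq j \leq n\}$. Here one has to verify that the Dyck condition -- non-negativity of the level at multiples of $d$ -- is equivalent to the condition $\restr{\mu'}{\Shat}\leq \restr{\nu}{\Shat}$ defining $P_{m,n,d}$. This amounts to the identity $l(\wbf)_{i\cdot d} = i\cdot m - n\cdot \sum_{k\leq i\cdot d}\mu'_k$, so that non-negativity at all multiples $i\cdot d$ reads $\sum_{\tau,j\leq i}\mu'_{\tau,j} \leq i\cdot m/n$; this is precisely the dominance inequality $\restr{\mu'}{\Shat}\leq \restr{\nu}{\Shat}$ once one observes that the partial sums $\sum_{j\leq i}(\restr{\mu'}{\Shat})_j$ determine the dominance order on $\QQ^n$ with all partial sums bounded by $i\cdot m/n$.

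Once this identification is in place, the middle map is $sw^{(d)}$ restricted to $\Acal_\NN^{(d)}$, which by Proposition \ref{prop sweep} is a bijection preserving Dyck words; the same identification applied to the third map (which symmetrically takes $\Acal_\NN^{(d)}$ back to $P_{m,n,d}$) then shows that $\zeta$ is a bijection, proving the theorem. I expect the only subtle point to be the translation of the Dyck condition into the inequality $\restr{\mu'}{\Shat}\leq\restr{\nu}{\Shat}$; everything else reduces to bookkeeping already implicit in the construction of $\zeta$.
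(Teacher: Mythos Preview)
Your proposal is correct and follows the same route as the paper: reduce to bijectivity of $\zeta$ via Lemma~\ref{lem type}, then use the factorisation~(\ref{eq zeta}) together with Proposition~\ref{prop sweep}. The paper's argument is terser than yours---it cites \cite{hamacher15a} for the fact that $\cotype(A)\in P_{m,n,d}$ (so that $\zeta$ is a well-defined self-map) and then only needs injectivity of $sw^{(d)}$ together with finiteness of $P_{m,n,d}$, whereas you instead verify directly that the Dyck condition on the word side matches the inequality $\restr{\mu'}{\Shat}\leq\restr{\nu}{\Shat}$ defining $P_{m,n,d}$; but this is the same argument with different bookkeeping emphasised.

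One minor imprecision: the image of the first map is not a single $\Acal_\NN^{(d)}$ but a disjoint union of such sets, one for each multiset of letters (equivalently, one for each $W_M$-orbit in $P_{m,n,d}$). Since $sw^{(d)}$ permutes letters it preserves each piece, and Proposition~\ref{prop sweep} applies on each piece separately, so the conclusion is unaffected.
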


 \subsection{The superbasic case}
 Proposition~\ref{prop superbasic} is a direct consequence of Theorem~\ref{thm cotype}  together with the relation between orbits of irreducible components and EL-charts in \cite[\S~8]{hamacher15a}. We briefly recall this relation for the reader's convenience before proving Prop.~\ref{prop superbasic}.

 When applying the results of the previous subsection to affine Deligne-Lusztig varieties, we consider EL-charts satisfying certain additional criteria.
 
 \begin{definition}
  Let $A$ be an EL-chart.
  \begin{subenv}
   \item $A$ is called normalised if $\sum_{b\in B_{(0)}} |b| = {n \choose 2}$ where $B_{(0)} = A_{(0)} \setminus (A_{(0)} + n)$.
   \item The Hodge point of $A$ is defined as $\type (A)_\dom$.
  \end{subenv}
 \end{definition}
 
 Note that every EL-chart is equivalent to a unique normalised EL-chart. Let $P_{\mu} \coloneqq \{\mu' \in P_{m,n,d} \mid \mu'_\dom = \mu \}.$ Then by Lemma~\ref{lem type} $A \mapsto \type(A)$ induces a bijection
 \[
  \{\textnormal{normalized\ EL-charts with Hodge point } \mu\} \leftrightarrow P_{\mu}.
 \]
It is easy to see that $\zeta$ stabilises $P_{\mu}$. Thus Theorem~\ref{thm cotype} says that   $A \mapsto \cotype(A)$ induces a bijection between the set of normalised EL-charts with Hodge point  $\mu$ and $P_{\mu}$.

For every minuscule $\mu \in X_\ast(T)_\dom$ there exists a unique basic $\sigma$-conjugacy class in $B(G,\mu)$. We choose a representative of this $\sigma$-conjugacy class as follows. Let $m_\tau = \val\det \mu(\epsilon)_\tau$ and choose $b =((b_{\tau,i,j})_{i,j=1}^n)_{\tau \in I}$ with
 \[
  b_{\tau,i,j} = \begin{cases}
                  \epsilon^{\lfloor \frac{i+m_\tau}{n} \rfloor} & \textnormal{if } j-i \equiv m_\tau \pmod{n} \\
                  0 & \textnormal{otherwise.}
                 \end{cases}
 \]
 Then the invariants $\lambda,\nu \in X^\ast(\Shat) = X^\ast(\That)_\Gamma \cong \ZZ^n$ are given by  $\nu = (\frac{m}{d\cdot n}, \ldots, \frac{m}{d\cdot n})$ with $m = \sum_{\tau\in I} m_\tau$ and $\lambda = (\lambda_1, \ldots, \lambda_n)$ with $\lambda_i = \lfloor \frac{i\cdot m}{n} \rfloor - \lfloor \frac{(i-1) \cdot m}{n} \rfloor$. The requirement that $b$ is in fact superbasic corresponds to the assertion that $m$ and $n$ are coprime.

 By our choice of $b$, the variety $X_\mu(b)^0 \coloneqq X_\mu(b) \cap \Grass_G^0$ is non-empty. In \cite{hamacher15a},\cite{hamacher15b} we constructed a $J_b(F)^0$-invariant cellular decomposition
 \[
  X_\mu(b)^0 = \bigcup_A S_A
 \]
 where the union runs over all normalised EL-charts with Hodge-point $\mu$. 
 We denote
 \[
  V_A \coloneqq \{(i,j) \mid b_i < b_j; \mu'_i = \mu'_j +1 \}.
 \] 
 In \cite[Prop.~6.5]{hamacher15a}, \cite[Prop.~13.9]{hamacher15b} we show that $\AA^{V_A} \isom S_A$ by constructing an element $g_A \in LG(\AA^{V_A})$, respectively a basis $(v_{\tau,i})$ of the universal $G$-lattice over $S_A$ in the terminology of above articles, such that $S_A$ is its image in the affine Grassmannian. In particular $\dim S_A = \# V_A$.

 Following the calculations of the term $S_1$ in \cite[p.~12831]{hamacher15a}, one obtains $\#V_A$ from $\tilde{\mu}$ using the formula
 \[
  \# V_A = \sum \lfloor \langle \restr{\tilde\mu}{ \Shat} - \mu_{\rm adom}, \hat\omega_{F}^\vee \rangle \rfloor,
 \]
 where the sum runs over all relative fundamental coweights $\hat\omega_F^\vee$ of $\Ghat$ and $\mu_{\rm adom}$ denotes the anti-dominant element in the $W$-orbit of $\mu$. In particular, $S_A$ is top-dimensional if and only if $\restr{\cotype(A)}{\Shat} = \lambda$.

 \begin{proof}[Proof of Proposition~\ref{prop superbasic}]
  Let $G$ be arbitrary. We assume without loss of generality that $b \in K\mu(\epsilon)K$, thus $X_\mu(b)^0 \not= \emptyset$. Since $J_b(F)$ acts transitively on $\pi_0(X_\mu(b))$ by \cite[Thm.~1.2]{CKV15}, it suffices to construct $C_{\tilde\mu}^0 \coloneqq C_{\tilde\mu} \cap X_\mu(b)^0$, which have to be $J_b(F)^0$-stable and universally homeomorphic to  affine spaces of the correct dimension. In particular, we may take $C_{\cotype(A)}^0 = S_A$ if $G = \Res_{F_d/F} \GL_n$.

  By Remark~\ref{remsb} we have $G^\ad \cong \prod_{i=1}^n \Res_{F_{d_i}/F} \PGL_{n_i}$. Let $G' = \prod_{i=1}^n \Res_{F_{d_i}/F} \GL_{n_i}$ and $b',\mu'$ be lifts of $b_\ad$ and $\mu_\ad$ to $G'$, such that $X_{\mu'}(b')^0 \not= \emptyset$. We identify the underlying topological spaces $X_\mu(b)^0 = X_{\mu_\ad}(b_\ad)^0 = X_{\mu'}(b')^0$ via the homeomorphism (\ref{eq isom adj}). Thus we get a cellular decomposition of $X_\mu(b)^0$ per transport of structure from $X_{\mu'}(b')^0$. Since it is $J_{b'}(F)^0$-stable, we consider the canonical projections $J_b(F)^0 \xrightarrow{p} J_{b_\ad}(F)^0 \xleftarrow{q} J_{b'}(F)^0$. It suffices to show that $q$ is surjective (implying that the decomposition is $J_{b_\ad}(F)^0$-stable) and that the $J_b(F)^0$-action factors through $J_{b_\ad}(F)^0$.

  To prove the surjectivity, let $j \in J_{b_\ad}(F)^0$ and choose a preimage $g \in G(L)^0$ of $j$. The element $g$ satisfies $g^{-1}b\sigma(g)=zb$ for some $z\in Z'(L)\cap G(L)^0=Z(\mathcal O_L)$, where $Z'$ denotes the center of $G'$. We choose $z'\in Z'(\mathcal O_L)$ with $(z')^{-1}\sigma(z')=z^{-1}$. Then $gz' \in J_{b'}(F)^0$ maps to $j$, as claimed.

  Now  an elementary calculation of the kernel shows that we have an exact sequence
 \begin{center}
   \begin{tikzcd}
    1 \arrow{r} & Z(\Ocal_F) \arrow{r} & J_b(F)^0 \arrow{r} & J_{b_\ad}(F)^0,
   \end{tikzcd}
  \end{center}
  where $Z$ denotes the center of $G$. Since $Z(\Ocal_F)$ acts trivially on $\Grass_G$, the $J_b(F)^0$-action factors through $J_{b_\ad}(F)^0$, as claimed.
 \end{proof}

 \begin{corollary} \label{cor superbasic}
  Conjecture~\ref{conj main} is true if $b$ is superbasic and $\mu$ minuscule.
 \end{corollary}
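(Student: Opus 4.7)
The approach is to combine Proposition~\ref{prop superbasic}, the equidimensionality noted in Remark~\ref{remsb}(1), and the identification (for minuscule $\mu$) of the Mirkovic--Vilonen basis of $V_\mu(\lambda)$ with $W.\mu\cap[\tilde\lambda+(1-\sigma)X_*(T)]$ recorded in the first remark after Theorem~\ref{thm_main}.

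First, since $b$ is superbasic we are in one of the cases covered by Remark~\ref{remsb}(1), so $X_\mu(b)$ is equidimensional and $\Sigma(X_\mu(b))=\Sigma^\top(X_\mu(b))$. Proposition~\ref{prop superbasic} decomposes $X_\mu(b)$ into $J_b(F)$-stable locally closed strata $C_{\tilde\mu}$ indexed by $\tilde\mu\in W.\mu$ with $\restr{\tilde\mu}{\hat S}\preceq\nu_b$, each of which meets every connected component of $\Grass_G$ in an irreducible space universally homeomorphic to an affine space of dimension $d(\tilde\mu)$. By \cite[Thm.~1.2]{CKV15}, $J_b(F)$ acts transitively on $\pi_0(X_\mu(b))$; hence each $C_{\tilde\mu}$ of maximal dimension contributes exactly one $J_b(F)$-orbit of top-dimensional irreducible components, and all such orbits arise this way. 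Thus $J_b(F)\backslash\Sigma(X_\mu(b))$ is canonically in bijection with the set of indices $\tilde\mu$ for which $d(\tilde\mu)$ is maximal.

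Next, $d(\tilde\mu)$ depends only on $\restr{\tilde\mu}{\hat S}$, and by the discussion at the end of Section~\ref{secsb} (reducing via the argument in the proof of Proposition~\ref{prop superbasic} to $G=\Res_{F_d/F}\GL_n$ and then invoking the bijectivity of the cotype map $\zeta$ provided by Theorem~\ref{thm cotype}), the value $d(\tilde\mu)$ is maximal precisely when $\restr{\tilde\mu}{\hat S}=\lambda$. In the superbasic case $X_*(T)_\Gamma$ is torsion-free, so this condition is equivalent to $\tilde\mu-\tilde\lambda\in(1-\sigma)X_*(T)$. Combining these two bijections yields a canonical identification $J_b(F)\backslash\Sigma(X_\mu(b))\lisom W.\mu\cap[\tilde\lambda+(1-\sigma)X_*(T)]$. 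Applying the minuscule weight-space computation from the first remark after Theorem~\ref{thm_main} identifies the right-hand side with the Mirkovic--Vilonen basis of $V_\mu(\lambda)$, which proves the conjecture in this case.

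The main point requiring care is verifying that the top-dimensional indices are exactly $\{\tilde\mu\in W.\mu:\restr{\tilde\mu}{\hat S}=\lambda\}$; this is essentially the content of the final paragraph of Section~\ref{secsb}, but it rests on the maximality property of $\lambda_G(b)$ built into its definition (Lemma/Definition~2.1) and on the fact that restriction to $\hat S$ captures the whole dependence of $d(\tilde\mu)$ on $\tilde\mu$. Once these are in place, and the reduction from general superbasic $G$ to the $\GL$-case via the adjoint isogeny used in the proof of Proposition~\ref{prop superbasic} is applied (noting that the centre acts trivially on $\Grass_G$), the bijection obtained is by construction compatible with the indexing of the Mirkovic--Vilonen basis, so the canonicity clause of Conjecture~\ref{conj main} is automatic.
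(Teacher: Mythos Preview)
Your proposal is correct and follows essentially the same route as the paper's own proof: use the stratification of Proposition~\ref{prop superbasic}, identify the top-dimensional strata as those with $\restr{\tilde\mu}{\Shat}=\lambda$ (the content of the last displayed formula before the proof of Proposition~\ref{prop superbasic}), and then match this index set with the Mirkovic--Vilonen basis of $V_\mu(\lambda)$. You are simply more explicit than the paper about the intermediate steps---equidimensionality, transitivity of $J_b(F)$ on $\pi_0$, and the passage from $\restr{\tilde\mu}{\Shat}=\lambda$ to $\tilde\mu\in\tilde\lambda+(1-\sigma)X_*(T)$ via torsion-freeness---all of which the paper's one-line proof takes for granted.
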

 \begin{proof}
  We have
  \[
  J_b(F) \backslash \Sigma(X_\mu(b)) \cong \{ \tilde\mu \in P_{\mu} \mid C_{\tilde\mu} \textnormal{ top-dimensional} \} \cong \{ \tilde\mu \in W.\mu \mid \restr{\tilde\mu}{\Shat} = \lambda \}.
  \]
 \end{proof}

\section{Reduction to the superbasic case}

 In this section we consider the general case of Theorem \ref{thm_main}, i.e.~$G$ is an unramified reductive group over $F$, $\mu$ is minuscule, and $b$ is  an arbitrary element of $G(L)$. The goal is to use a reduction method, first introduced in \cite{GHKR06}, to relate to the superbasic case.

 Let $P \subset G$ be a smallest standard parabolic subgroup of $G,$ defined over $F$ and with the following property.  Let $M$ be the Levi factor of $P$ containing $T$. Then we want that $M(L)$ contains a $\sigma$-conjugate of $b$ which is superbasic in $M$. Fix a representative $b\in M(L)$ of $[b]_G=[b]$. Then we furthermore want that the $M$-dominant Newton point of $b$ is already $G$-dominant. For existence of such $P,M,b$ compare Remark \ref{remsb}. We write $P = M\cdot N$ where $N$ denotes the unipotent radical of $P$. Since $b\in M(L)$, this induces a decomposition
 \[
  J_b(F) \cap P(L) = (J_b(F) \cap M(L)) \cdot (J_b(F) \cap N(L)).
 \]
 
 Throughout the section, we may refer to subschemes of the loop group or Grassmannian by their $k$-valued points to improve readability, e.g.\ write $K$ instead of $L^+G$ or $N(L)$ instead of $LN$. We denote $K_M = M(\Ocal_L)$, $K_N = N(\Ocal_L)$ and $K_P = P(\Ocal_L)$.

 We consider the variety
$$ X_{\mu}^{M\subset G}(b) =\{gK_M \in \Grass_M \mid g^{-1}b\sigma(g)\in K\mu K\}.$$
 Then we have $  X_\mu^{M\subset G}(b) = \coprod_{\mu' \in I_{\mu,b}} X_{\mu'}^M(b)$
 where $I_{\mu,b}$ is the set of $M$-conjugacy classes of cocharacters $\mu'$ in the $G$-conjugacy class of $\mu$ with $[b]_M\in B(M,\mu')$. As $[b]_M$ is basic in $M$, this latter condition is equivalent to $\kappa_M(b)=\kappa_M(\mu')$ in $\pi_1(M)_{\Gamma}$. We identify an element of $I_{\mu,b}$ with its $M$-dominant representative in $X_*(T)$. Note that $I_{\mu,b}$ is non-empty and finite, but may have more than one element if $G$ is not split.

\begin{notation}
  Note that $X_\mu^{M\subset G}(b)$ is in general not equidimensional, although the individual summands are conjectured to be. We define
  $$\Sigma'(X_{\mu}^{M \subset G}(b)) \coloneqq
   \bigcup_{\mu'\in I_{\mu,b}}\Sigma^{\top}(X_{\mu'}^{M}(b)).
  $$
\end{notation}

Using Corollary~\ref{cor superbasic} we can show that $X^{M \subset G}_{\mu}(b)$ has the same number of orbits of irreducible components as given by the right hand side of Theorem \ref{thm_main}.

 \begin{lemma} \label{lemma sbunionthm}
  $(J_b(F)\cap M(L))\backslash \Sigma'(X_{\mu}^{M\subset G}(b))=W.\mu\cap [\tilde\lambda+(1-\sigma)X_*(T)]$
 \end{lemma}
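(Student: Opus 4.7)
The plan is to apply Corollary~\ref{cor superbasic} componentwise to the decomposition $X_\mu^{M\subset G}(b)=\coprod_{\mu'\in I_{\mu,b}}X_{\mu'}^{M}(b)$, and then assemble the results. Since $b$ is superbasic in $M$ by our choice of $(P,M,b)$, and since every $\mu'\in I_{\mu,b}$ is minuscule as a cocharacter of $M$ (being minuscule for $G$, as the $M$-coroots form a subset of the $G$-coroots), the corollary applies to each triple $(M,\mu',b)$. Moreover $X_{\mu'}^M(b)$ is equidimensional by Remark~\ref{remsb}, so $\Sigma^\top=\Sigma$ on the left, and we obtain canonical bijections
\[
  (J_b(F)\cap M(L))\backslash \Sigma^\top(X_{\mu'}^M(b)) \;\longleftrightarrow\; W_M.\mu' \cap [\tilde\lambda_M + (1-\sigma)X_*(T)],
\]
where $\tilde\lambda_M\in X_*(T)$ is any lift of $\lambda_M(b)\in X^*(\hat T)_\Gamma$.

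Next I would invoke Lemma~\ref{lemma lambda}(b): by our choice of $M$, the Newton point $\nu_M(b)=\nu_G(b)$ is already $G$-dominant, so $\lambda_M(b)=\lambda_G(b)$ in $X^*(\hat T)_\Gamma$ and we may choose $\tilde\lambda_M=\tilde\lambda$ uniformly across all $\mu'$. Taking the disjoint union over $\mu'\in I_{\mu,b}$ then gives
\[
  (J_b(F)\cap M(L))\backslash \Sigma'(X_\mu^{M\subset G}(b)) \;\longleftrightarrow\; \bigsqcup_{\mu'\in I_{\mu,b}} \bigl(W_M.\mu'\cap[\tilde\lambda+(1-\sigma)X_*(T)]\bigr).
\]
Since $W.\mu$ is partitioned by its $W_M$-orbits, and since membership $\mu'\in I_{\mu,b}$ is equivalent to $\kappa_M(\mu')=\kappa_M(b)$ on the $M$-dominant representative, the right-hand side rewrites as $\{\tilde\mu\in W.\mu\mid \kappa_M(\tilde\mu)=\kappa_M(b)\}\cap[\tilde\lambda+(1-\sigma)X_*(T)]$.

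To conclude it suffices to check that the $\kappa_M$-constraint is automatic on the coset $\tilde\lambda+(1-\sigma)X_*(T)$. Two observations do this: the Kottwitz map $\kappa_M\colon X_*(T)\to\pi_1(M)_\Gamma$ factors through $X_*(T)_\sigma=X_*(T)/(1-\sigma)X_*(T)$ (since $\Gamma$ is topologically generated by $\sigma$ in the unramified case), so elements congruent mod $(1-\sigma)X_*(T)$ have the same $\kappa_M$; and $\tilde\lambda$ projects to $\kappa_M(b)$ in $\pi_1(M)_\Gamma$, because it lifts $\lambda_G(b)=\lambda_M(b)$ and by construction in Section~\ref{secdeflambda} the class $\lambda_M(b)$ has image $\kappa_M(b)$. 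Hence every $\tilde\mu\equiv\tilde\lambda\pmod{(1-\sigma)X_*(T)}$ satisfies $\kappa_M(\tilde\mu)=\kappa_M(\tilde\lambda)=\kappa_M(b)$, and the claim follows. The one delicate point in this outline is the synchronisation step (Lemma~\ref{lemma lambda}(b)) that identifies $\lambda_M(b)$ with $\lambda_G(b)$; without it the individual lifts $\tilde\lambda_M$ attached to different $\mu'$ could not be merged into the single element $\tilde\lambda$ of the right-hand side of the claim.
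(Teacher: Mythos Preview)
Your proof is correct and follows the same route as the paper: apply Corollary~\ref{cor superbasic} to each $(M,\mu',b)$, invoke Lemma~\ref{lemma lambda}(2) to replace $\tilde\lambda_M$ by $\tilde\lambda$, and then collapse the union over $I_{\mu,b}$ into $W.\mu$. The only cosmetic difference is in the final step: the paper phrases it as ``$W_M.\mu'\cap[\tilde\lambda+(1-\sigma)X_*(T)]\neq\emptyset$ iff $\kappa_M(\mu')=\kappa_M(b)$'' (using that $\tilde\lambda$ is minuscule), whereas you observe directly that $\kappa_M$ kills $(1-\sigma)X_*(T)$ and sends $\tilde\lambda$ to $\kappa_M(b)$, so the $\kappa_M$-constraint is redundant on the coset; both arguments yield the same conclusion.
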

 \begin{proof}
By Corollary~\ref{cor superbasic} we have
 $$ (J_b(F)\cap M(L))\backslash\bigcup_{\mu'\in I_{\mu,b}}\Sigma^{\rm top}(X_{\mu'}^M(b)) = \bigcup_{\mu'\in I_{\mu,b}}  \big(W_M.\mu'\cap [\tilde\lambda_M+(1-\sigma)X_*(T)]\big).$$
Here the unions on both sides are disjoint, and $\tilde\lambda_M=\tilde\lambda_M(b)$ denotes the element associated with $[b]\in B(M)$ whereas $\tilde\lambda=\tilde\lambda_G(b)$. By Lemma \ref{lemma lambda}, the above union is equal to $\bigcup_{\mu'\in I_{\mu,b}}  \big(W_M.\mu'\cap [\tilde\lambda+(1-\sigma)X_*(T)]\big).$ As $\tilde\lambda$ is minuscule, the set $W_M.\mu'\cap [\tilde\lambda+(1-\sigma)X_*(T)]$ is nonempty for a given $\mu' \in W.\mu$ if and only if $\kappa_M(\mu') = \kappa_M(\tilde\lambda) (= \kappa_M(b))$, i.e.\ iff $\mu' \in I_{\mu,b}$. Hence 
  $\bigcup_{\mu'\in I_{\mu,b}}  \big(W_M.\mu'\cap [\tilde\lambda+(1-\sigma)X_*(T)]\big)= W.\mu\cap [\tilde\lambda+(1-\sigma)X_*(T)].$
 \end{proof}

 In order to relate the irreducible components of $X_\mu^{M\subset G}(b)$ to those of $X_\mu(b)$, we consider the variety 
 \[
  X_\mu^{P\subset G} (b)\coloneqq \{gK_P \in \Grass_P \mid g^{-1}b\sigma(g) \in K\mu(\epsilon)K \}
 \]
 as intermediate object. The inclusion $P \mono G$ induces a natural map $X_{\mu}^{P\subset G}(b)\rightarrow X_{\mu}(b)$. Using the Iwasawa decomposition $G(L)=P(L)K$ we see that this map is surjective, and in fact $X_{\mu}^{P\subset G}(b)$ is nothing but a decomposition of $X_{\mu}^G(b)$ into locally closed subsets (see e.g.\ \cite[Lemma~2.2]{hamacher15a}). Thus we obtain a natural bijection $$\Sigma^{\top}(X_{\mu}^{P\subset G}(b))\rightarrow \Sigma^{\top}(X_{\mu}^G(b))$$ which induces a surjection
 \begin{equation}\label{surj1}
  \alpha_\Sigma\colon  (J_b(F)\cap P(L))\backslash\Sigma^{\top}(X_{\mu}^{P\subset G}(b))\twoheadrightarrow J_b(F)\backslash\Sigma^{\top}(X_{\mu}^G(b)).
 \end{equation}
 Furthermore, $\dim X_{\mu}^{P\subset G}(b)=\dim X_{\mu}^G(b).$

 On the other hand, the restriction of the canonical projection $\Grass_P \epi \Grass_M$ induces a surjective morphism $$\beta\colon  X_{\mu}^{P\subset G}(b)\rightarrow X_\mu^{M\subset G}(b)$$ by \cite[Prop.~2.9]{hamacher15a}.  Moreover the fibre dimension for $x\in X_{\mu'}^M(b)$ is given by
 \begin{equation}\label{fibdim2}
  \dim \beta^{-1}(x)=\dim X_{\mu}^{P\subset G}(b)- \dim X_{\mu'}^{M}(b),
 \end{equation}
 see \cite[Lemma 2.8, Prop. 2.9~(2)]{hamacher15a}, using that for minuscule $\mu$, equality in Lemma 2.8 of loc.~cit.~always holds, and using the dimension formula \cite[Thm.~1.1]{hamacher15a}. Note that this only depends on $\mu'$ (but indeed depends on the choice of $\mu'\in I_{\mu,b}$), but not on the point $x$.

 \begin{lemma}\label{lem61}
  $\beta$ induces a well-defined surjective map $$\beta_{\Sigma}\colon \Sigma^{\top}(X_{\mu}^{P\subset G}(b))\rightarrow \Sigma'(X_{\mu}^{M\subset G}(b)).$$ 
  It is $J_b(F)\cap P(L)$-equivariant for the natural action on the left hand side, and the action through the natural projection $J_b(F)\cap P(L) \epi J_b(F)\cap M(L)$ on the right hand side.
 \end{lemma}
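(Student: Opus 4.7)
My plan is to verify the three asserted properties — well-definedness, surjectivity, and equivariance — by a routine application of the fiber dimension formula \eqref{fibdim2} together with the disjoint-union structure of $X_\mu^{M\subset G}(b)$. The two ingredients needed throughout are: (i) since $X_\mu^{M\subset G}(b) = \coprod_{\mu' \in I_{\mu,b}} X_{\mu'}^M(b)$ is a disjoint union of open and closed subschemes, $\beta^{-1}(X_{\mu'}^M(b))$ is open and closed in $X_\mu^{P\subset G}(b)$, so any irreducible subset of the source lies in exactly one such preimage; and (ii) over any $x \in X_{\mu'}^M(b)$ one has $\dim \beta^{-1}(x) = \dim X_\mu^{P\subset G}(b) - \dim X_{\mu'}^M(b)$.

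For well-definedness, I would take $Z \in \Sigma^\top(X_\mu^{P\subset G}(b))$ with the unique $\mu'$ such that $Z \subset \beta^{-1}(X_{\mu'}^M(b))$, and compare dimensions along $\beta|_Z \colon Z \to \overline{\beta(Z)}$. The generic fiber has dimension at most $\dim X_\mu^{P\subset G}(b) - \dim X_{\mu'}^M(b)$ by (ii), so
$$\dim X_\mu^{P\subset G}(b) = \dim Z \leq \dim \overline{\beta(Z)} + \dim X_\mu^{P\subset G}(b) - \dim X_{\mu'}^M(b),$$
forcing $\dim \overline{\beta(Z)} \geq \dim X_{\mu'}^M(b)$ and hence equality. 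Thus $\overline{\beta(Z)} \in \Sigma^\top(X_{\mu'}^M(b)) \subset \Sigma'(X_\mu^{M\subset G}(b))$, and $\beta_\Sigma(Z) \coloneqq \overline{\beta(Z)}$ is well defined. For surjectivity, given $Y \in \Sigma^\top(X_{\mu'}^M(b))$, a second application of (ii) gives $\dim \beta^{-1}(Y) = \dim Y + \dim X_\mu^{P\subset G}(b) - \dim X_{\mu'}^M(b) = \dim X_\mu^{P\subset G}(b)$, so every irreducible component of $\beta^{-1}(Y)$ is top-dimensional in $X_\mu^{P\subset G}(b)$. Since $\beta$ surjects $\beta^{-1}(Y)$ onto the irreducible $Y$, at least one such component $Z$ dominates $Y$, whence $\beta_\Sigma(Z) = Y$.

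Equivariance will be essentially automatic: $\beta$ is the restriction to the Deligne-Lusztig subschemes of the natural projection $\Grass_P \to \Grass_M$, which intertwines the left actions of $P(L)$ and $M(L)$ via the canonical surjection $P(L) \epi M(L)$. Since $b \in M(L)$, a quick check using the decomposition $P = M \cdot N$ shows that this surjection sends $J_b(F) \cap P(L)$ into $J_b(F) \cap M(L)$, so the equivariance descends to $\beta_\Sigma$. The main subtlety I anticipate is purely bookkeeping: $X_\mu^{M\subset G}(b)$ is \emph{not} equidimensional, so ``top-dimensional'' on the target is measured separately on each stratum $X_{\mu'}^M(b)$ — this is precisely what makes the two applications of \eqref{fibdim2} compatible, because the fiber dimension of $\beta$ adjusts exactly by $\dim X_\mu^{P\subset G}(b) - \dim X_{\mu'}^M(b)$ from stratum to stratum.
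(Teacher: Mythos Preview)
Your proposal is correct and follows essentially the same approach as the paper: both arguments use the open-and-closed decomposition of $X_\mu^{M\subset G}(b)$ to localise to a single $\mu'$, then apply the constant fibre dimension formula \eqref{fibdim2} to match dimensions and conclude that the closure of the image of a top-dimensional component is a top-dimensional component of $X_{\mu'}^M(b)$. The paper's proof is terser (it dispatches surjectivity and equivariance in one sentence by invoking the corresponding properties of $\beta$), while you spell out the surjectivity argument via $\dim\beta^{-1}(Y)$ and the equivariance via the decomposition $J_b(F)\cap P(L)=(J_b(F)\cap M(L))\cdot(J_b(F)\cap N(L))$ already recorded in the paper; these are exactly the details implicit in the paper's one-line justification.
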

 Recall that a subset of $G(L)$ is called bounded if it is contained in a finite union of $K$-double cosets.
 \begin{proof}
  Let $\mathcal C$ be a top-dimensional irreducible component of $X_{\mu}^{P\subset G}(b)$. Then $\beta(\mathcal{C})$ is irreducible and thus contained in one of the open and closed subschemes $X_{\mu'}^{M}(b).$ By (\ref{fibdim2}), its dimension is equal to $\dim (X_{\mu'}^{M}(b))$, hence $\beta(\mathcal{C})$ is a dense subscheme of one of the irreducible components of $X_{\mu'}^{M}(b)$. In this way we obtain the claimed map $\beta_{\Sigma}$. It is surjective and $J_b(F)\cap P(L)$-equivariant because the same holds for $\beta$.
 \end{proof} 

\begin{proposition}\label{prop56}
 Let $Z \subset X_\mu^{M\subset G}(b)$ be an irreducible subscheme. Then $J_b(F) \cap N(L)$ acts transitively on $\Sigma(\beta^{-1}(Z)).$
\end{proposition}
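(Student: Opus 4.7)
My plan is to reduce the transitivity on $\Sigma(\beta^{-1}(Z))$ to a fiberwise statement for $\beta$, and then to handle the single fiber by a filtration argument on $N$.

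\emph{Identification and reduction.} Since $Z$ is irreducible and $X_\mu^{M\subset G}(b) = \coprod_{\mu' \in I_{\mu,b}} X_{\mu'}^M(b)$, the subscheme $Z$ is contained in a single $X_{\mu'}^M(b)$. Fix any $x_0 = g_0 K_M \in Z$ with $g_0 \in M(L)$ and set $m_0 \coloneqq g_0^{-1} b \sigma(g_0) \in K_M \mu'(\epsilon) K_M$. Via $n K_N \mapsto g_0 n K_P$ the fiber $\beta^{-1}(x_0) \subset \Grass_P$ is identified with
\[
 Y_{m_0} \coloneqq \{n K_N \in \Grass_N \mid n^{-1} m_0 \sigma(n) \in K\mu(\epsilon)K\},
\]
and a direct calculation using $j^{-1} b \sigma(j) = b$ shows that the left action of $j \in J_b(F) \cap N(L)$ on $\beta^{-1}(x_0)$ corresponds to left multiplication by $g_0^{-1} j g_0 \in J_{m_0}(F) \cap N(L)$ on $Y_{m_0}$. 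By \eqref{fibdim2} the restriction $\beta|_{\beta^{-1}(Z)} \colon \beta^{-1}(Z) \to Z$ has equidimensional fibers of constant dimension, so every irreducible component of $\beta^{-1}(Z)$ dominates $Z$; the $N(L)$-equivariance of $\Grass_P \to \Grass_M$ trivialises this morphism étale-locally on $Z$, hence $\Sigma(\beta^{-1}(Z))$ is identified with the set of monodromy orbits on $\Sigma(Y_{m_0})$, compatibly with the $J_b(F) \cap N(L)$-action and the $J_{m_0}(F) \cap N(L)$-action. It therefore suffices to show that $J_{m_0}(F) \cap N(L)$ acts transitively on $\Sigma(Y_{m_0})$.

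\emph{Transitivity on a single fiber.} I would choose an $M$-invariant filtration $N \supset N_1 \supset N_2 \supset \cdots$ whose successive quotients are (products of) root subgroups. Since $m_0$ is basic in $M$ with $G$-dominant and therefore $M$-central Newton point, $\sigma$-conjugation by $m_0$ preserves the filtration and acts on each graded piece with nonzero slopes only. The minuscule hypothesis on $\mu$ then produces an explicit decomposition of $Y_{m_0}$ into finitely many cells, each an affine space parametrised by integer-valued coordinates of $n$ in the root subgroups of the filtration; these cells exhaust the irreducible components of $Y_{m_0}$. Inductively up the filtration, a Lang-style surjectivity statement for $\sigma$-conjugation on each graded piece yields elements of $J_{m_0}(F) \cap N(L)$ that move between any two such cells, giving the required transitivity.

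The main obstacle is the last step: one must verify that on every root subgroup appearing as a graded piece of the filtration of $N$, the $\sigma$-twisted conjugation by $m_0$ has no slope-zero part, as such a contribution would produce cells unreachable by $J_{m_0}(F) \cap N(L)$. This is precisely what the hypothesis that the $M$-Newton point of $b$ is already $G$-dominant guarantees, since it forces the Newton point of $m_0$ to lie in the center of $M$ and therefore to pair nontrivially with every root of $N$, making the relevant Lang-type argument available.
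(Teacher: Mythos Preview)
Your proposal has two genuine gaps. The first is the \'etale-local trivialisation of $\beta^{-1}(Z)\to Z$: the $N(L)$-equivariance of $\Grass_P\to\Grass_M$ trivialises only the ambient fibration, not its intersection with $X_\mu^{P\subset G}(b)$. The fibre over $x=gK_M$ is $Y_{m_x}$ with $m_x=g^{-1}b\sigma(g)$, and even after the normalisation $m_x\in K_M\cdot\mu'(\epsilon)$ the twisted Lang map $n\mapsto m_x^{-1}n^{-1}m_x\sigma(n)$ defining $Y_{m_x}$ still varies with $x$; only its \emph{target} $N(L)\cap\mu'(\epsilon)^{-1}K\mu(\epsilon)K$ is constant. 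So there is no evident identification of $\Sigma(\beta^{-1}(Z))$ with monodromy orbits on a fixed $\Sigma(Y_{m_0})$. The second gap is the claim that $G$-dominance of the $M$-Newton point forces $\nu_b$ to pair nontrivially with every root of $N$. This is false: $M$ is chosen as a smallest Levi in which $[b]$ has a superbasic representative, not as the centraliser of $\nu_b$. If $[b]$ is $G$-basic (so $\nu_b$ is central) but $G^{\ad}$ has a simple factor not of the form $\Res_{F_d/F}\PGL_n$, then $M\subsetneq G$ and $\langle\nu_b,\alpha\rangle=0$ for every root $\alpha$ of $N$. So the obstacle you single out as decisive is genuinely present, and your proposed resolution does not apply.

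The paper's argument avoids both issues by working with the total space rather than reducing to a fibre. After lifting $Z$ \'etale-locally to $\iota(T)\subset LM$, it considers the map $\gamma\colon(m,n)\mapsto(m,\,b_m^{-1}n^{-1}b_m\sigma(n))$ into $\mathcal{E}=\iota(T)\times\bigl(N(L)\cap\mu'(\epsilon)^{-1}K\mu(\epsilon)K\bigr)$; this target is irreducible by the Mirkovi\'c--Vilonen theorem, with no cell decomposition or slope analysis required. A lifting lemma for the twisted Lang map---valid without any hypothesis on the pairing $\langle\nu_b,\alpha\rangle$---then shows that $\gamma$ carries every $K_N$-invariant open subset of the source to a dense subset of $\mathcal{E}$. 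Thus the $\gamma$-images of any two irreducible components of $\beta^{-1}(Z)$ meet, and any common value produces two points differing by an element of $J_b(F)\cap N(L)$.
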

In the proof we need the following remark.
\begin{remark}\label{remhv24}
For $x\in \widetilde{W}$ let $IxI$ be the locally closed subscheme of $LG$ whose $k$-valued points are $I(k)xI(k)$. Let $\Y$ be a scheme and $g\in (IxI)(\Y)$. Then we claim that there are elements $i_1,i_2\in I(\Y)$ with $g=i_1xi_2$. In equal characteristic, this is \cite[Lemma~2.4]{HV12} (the proof in loc.~cit. shows the above statement, although the Lemma only claims the assertion \'etale locally on $\Y$). Let us explain how to modify the proof to deduce the above statement in general: We consider the morphism $I/(I\cap xIx^{-1})\rightarrow LG/I$ to the affine flag variety given by $g\mapsto gx$. By writing down the obvious inverse one sees that it is an immersion with image $IxI/I$.

Let $g\in (IxI)(\Y)$ and $\overline g$ its image in the affine flag variety. Then the above shows that $\overline g$ is the image of some $\bar i\in I/(I\cap xIx^{-1})(\Y)$. Note that $I/(I\cap xIx^{-1})=I_0/(I_0\cap xI_0x^{-1})$ where $I_0$ is the unipotent radical of $I$. By \cite[Lemma~2.1]{HV12} we can thus lift $\bar i\in I_0/(I_0\cap xI_0x^{-1})(\Y)$ to an element $i_1\in I_0(\Y)$ which is as claimed.
\end{remark}

\begin{proof}[Proof of Proposition \ref{prop56}] As we have to take an inverse image of an element under $\sigma$ later in this proof, we replace all occuring ind-schemes by their perfections. Note that this does not change the underlying topological spaces of the schemes. Moreover, since we may check the assertion on an open covering of $Z$, we may replace $Z$ by an open subscheme $\Y \subset Z$ containing one fixed but arbitrary point $z \in Z(k)$.

 \'Etale locally there is a lifting of the inclusion $Z \mono X_{\mu'}^M(b)$ to $LM$ (\cite[Lemma~1.4]{PR08}, the proof also works for $\cha F = 0$, cf.~\cite[Prop.~1.20]{zhu}). Thus there exists $\Y' \to Z$ \'etale with $z \in \image (\Y' \to Z)$ such that there exists a lift $\iota:\Y' \to LM$. By replacing $\Y'$ by an irreducible component if necessary, we may assume that $\Y'$ is again irreducible. We denote by $\Y$ the image of $\Y'$ in $Z$, and by $y\in \Y'$ a point mapping to $z$.

 We denote
 \[
  \Phi=\{(m,n)\in \iota(\Y')\times N(L) \mid mnK_P\in X_{\mu}^{P\subset G}(b)\}  
 \]
 and $b_m\coloneqq m^{-1}b\sigma(m)$ for any $m \in M(L)$. For $g = mn \in P(L)$ we have
 \begin{equation} \label{eq Frobenius slope filtr}
  g^{-1}b\sigma(g)=b_m\cdot [b_m^{-1}n^{-1}b_m\sigma(n)]
 \end{equation}
 where the bracket is in $N(L)$ and where $b_m\in M(L)$. The condition $gK_P \in \beta^{-1}(\Y)$ is then equivalent to the condition that we may choose $m\cdot n\in gK_P$  with $m\in \iota(\Y')\subset LM$ and $n\in N(L)$ such that the last bracket is in $N(L)\cap b_m^{-1}K\mu(\epsilon) K$. Thus we have a morphism
 \begin{align*}
  \gamma\colon \Phi &\rightarrow \mathcal{E} \coloneqq  \{(m,c)\mid m\in \iota(\Y'), c\in N(L)\cap b_m^{-1}K\mu(\epsilon) K\} \\
  (m,n) &\mapsto (m,b_{m}^{-1}n^{-1}b_{m}\sigma(n)).
 \end{align*}

 In order to get an easier description of $\Ecal$, we show that one can assume $b_m \in K_M\cdot\mu'$ after further shrinking $\Y$ and replacing $\iota$ if necessary. Let $x\in \widetilde W$ such that $I_MxI_M \subset K_M \mu'(\epsilon) K_M$ is the open cell, where $I_M$ denotes the standard Iwahori subgroup of $M$. Then $K\mu(\epsilon) K=KxK$, and we fix $k_0,k_0' \in K$ such that $b_{\iota(y)} = k_0 x k_0'$. We replace $\Y'$ (and thus $\Y$) by the open neighborhood of $y$ such that $b_m \in k_0 \cdot I_M x I_M \cdot k_0'$ for all $m \in \iota(\Y')$.  By Remark \ref{remhv24} we have a global decomposition $b_m=k_0i_1xi_2k_0'$ with $i_j\in I_M(\iota(\Y'))$. As $\Y\subseteq X_{\mu'}^M(b)$ we have $x=w_1\mu' w_2\in W_M\mu W_M$, thus $b_m=k_0i_1w_1\mu'(\epsilon)w_2i_2k_0'$. We now replace $m$ by $m\sigma^{-1}(w_2i_2k_0')^{-1}\in mK_M$ and modify $\iota$ accordingly. With respect to this new choice we obtain a decomposition of $b_m$ of the form  $k_1\mu'(\epsilon)$ with $k_1=\sigma^{-1}(w_2i_2k_0')k_0i_1w_1\in L^+M(\iota(\Y'))$. Now
 \begin{align*}
  N(L)\cap b_m^{-1}K\mu(\epsilon) K&= N(L)\cap \mu'(\epsilon)^{-1}K\mu(\epsilon) K\\
  &= \mu'(\epsilon)^{-1}(N(L)\cdot\mu'(\epsilon)\cap K\mu(\epsilon) K).
 \end{align*}
 Note that this only depends on the constant element $\mu'$. Hence 
 \[
  \mathcal E=\iota(\Y')\times (N(L)\cap \mu'(\epsilon)^{-1}K\mu(\epsilon) K).
 \]
 \noindent{\it Claim 1.} $\mathcal E$ is irreducible. 

 As $\iota(\Y')$ is irreducible, we have to show that $N(L)\cap \mu'(\epsilon)^{-1} K\mu(\epsilon) K$ is irreducible. For this we consider the morphism $\pr_{\mu'}\colon  N(L) \to N(L)\mu'(\epsilon)K \subset \Grass_G, n \mapsto \mu'(\epsilon)n$. Then $N(L)\cap \mu'(\epsilon)^{-1} K\mu(\epsilon) K$ is the preimage of $N(L)\mu'(\epsilon) K \cap K\mu(\epsilon)K$, which is irreducible by \cite[Cor.~13.2]{MV07}. On the other hand $\pr_{\mu'}$ is a $K_N$-torsor, since it is surjective and factorises as
 $$N(L)\rightarrow \Grass_N\hookrightarrow \Grass_G\overset{\mu'(\epsilon)\cdot}{\rightarrow}\Grass_G.$$ Here the first map is the projection, a $K_N$-torsor. The second is the natural closed embedding, and the third the isomorphism obtained by left multiplication by $\mu'(\epsilon)$.
 As $K_N$ is also irreducible, this completes the proof of Claim 1. \smallskip

 \noindent{\it Claim 2.} Let $\mathcal F\subseteq \Phi$ be a non-empty open subscheme with $\mathcal F=\mathcal F K_N$ where $K_N$ acts by right multiplication on the second component. Then its image under $\gamma$ contains an open subscheme of $\Ecal$. In particular, it is dense by Claim 1.

 Fix an irreducible component $C$ of $\Phi $ such that its intersection with $\mathcal F$ is non-empty. We may replace $\mathcal F$ by an open and dense subscheme of points only contained in the one irreducible component $C$. As $\mathcal F$ is invariant under right multiplication by $K_N$ and $m$ is contained in a bounded subscheme of $LM$, its image under $\gamma$ is invariant under right multiplication by some (sufficiently small) open subgroup $K_N'$ of $K_N$ (this follows from the same proof as \cite[Prop. 5.3.1]{GHKR06}, which carries over literally to the unramified case and the case $\cha F = 0$). Thus it is enough to show that the image of $\gamma(\mathcal F)$ in $\iota(\Y')\times (N(L)\cap \mu'(\epsilon)^{-1}K\mu K)/K_N'$ is open. Let $g_0\in \mathcal F$ and let $U=\Spec R$ be an affine open neighborhood of $\gamma(g_0)$ in $\mathcal E$. After possibly replacing $K_N'$ by a smaller open subgroup we may assume that $U$ is $K_N'$-invariant. Let $(m_U,n_U)$ be the universal element. Then $m_U$ and $n_U$ are contained in bounded subsets of $LM$ resp. $LN$. By Corollary \ref{corlift} there is an \'etale covering $R'$ of $R$ and a morphism $\Spec R'\rightarrow\Phi$ such that the composite with $\gamma$ and the quotient modulo $K_N'$ maps $\Spec R'$ surjectively to $U/K_N'$. Intersecting $\Spec R'$ with the inverse image of the open subscheme $\mathcal F$ of $\Phi$ and using that $R\rightarrow R'$ is finite \'etale we obtain an open subscheme of $\Spec R'$, or of $\mathcal F$ mapping surjectively to an open neighbourhood of $g_0K_N'$. This implies the claim. \smallskip

 Finally, we show show that all irreducible components of $\beta^{-1}(\Y)$ are contained in one $J_b(F)\cap N(L)$-orbit of irreducible components of $X_{\mu}^{P\subset G}(b)$. Let $\mathcal D,\mathcal D'$ be irreducible components of $\beta^{-1}(\Y)$. We have to show that all dense open subsets $D,D'$ of the two components contain points $p,p'$ which are in the same $J_b(F)$-orbit. Consider the $K_N$-torsor $$\phi\colon \Phi\rightarrow \beta^{-1}(\Y),\quad (m,n)\mapsto mnK_P.$$ Then it is enough to show that for all non-empty open subsets $C_1,C_2$ of $\Phi$ with $C_iK_N=C_i$ there are points $q_i\in C_i$ and a $j\in J$ with $\phi(q_1)=j\phi(q_2)$. This latter condition follows if we can show that $\gamma(q_1)=\gamma(q_2)$. But by Claim 2, $\gamma(C_1), \gamma(C_2)$ are both open and dense in $\mathcal E$, which implies the existence of such $q_1,q_2$.
\end{proof}

\begin{corollary} \label{cor prosurj2}
 $\beta_{\Sigma}$ induces a bijection $$(J_b(F) \cap P(L)) \backslash \Sigma(X_\mu^{P \subset G}(b)) \bij (J_b(F) \cap M(L)) \backslash \Sigma(X_\mu^{M \subset G}(b))$$ which restricts to $$(J_b(F) \cap P(L)) \backslash \Sigma^{\rm top}(X_\mu^{P \subset G}(b)) \bij (J_b(F) \cap M(L)) \backslash \Sigma'(X_\mu^{M \subset G}(b)).$$ In particular $X_\mu^{P\subset G}(b)$ is equidimensional if and only if the $X_{\mu'}^M(b)$ are for all $\mu' \in I_{\mu,b}$.
\end{corollary}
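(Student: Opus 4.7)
The plan is to extend $\beta_\Sigma$ from top-dimensional components to all of $\Sigma(X_\mu^{P\subset G}(b))$, identify each fiber of $\beta_\Sigma$ with a single $J_b(F)\cap N(L)$-orbit via Proposition~\ref{prop56}, and read off both the top-dimensional bijection and the equidimensionality equivalence from the uniform fiber-dimension formula~(\ref{fibdim2}).

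For $\tilde{\mathcal C} \in \Sigma(X_\mu^{P\subset G}(b))$ the image $\beta(\tilde{\mathcal C})$ is irreducible and lies in a unique summand $X_{\mu'}^M(b)$; let $\mathcal C$ be its closure there. I would show $\mathcal C \in \Sigma(X_{\mu'}^M(b))$, so that $\beta_\Sigma(\tilde{\mathcal C}):=\mathcal C$ extends the map of Lemma~\ref{lem61}. Suppose instead $\mathcal C\subsetneq \mathcal C'$ for some $\mathcal C'\in\Sigma(X_{\mu'}^M(b))$. Applying Proposition~\ref{prop56} to $\mathcal C$ and to $\mathcal C'$ with reduced structure, all irreducible components of $\beta^{-1}(\mathcal C)$, resp.\ of $\beta^{-1}(\mathcal C')$, form a single $J_b(F)\cap N(L)$-orbit; combined with the constant fiber dimension $d_{\mu'}:=\dim X_\mu^G(b)-\dim X_{\mu'}^M(b)$ from~(\ref{fibdim2}) and the surjectivity of $\beta$ (forcing at least one such component to dominate), these common dimensions equal $\dim\mathcal C+d_{\mu'}$ and $\dim\mathcal C'+d_{\mu'}$ respectively. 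Any irreducible component of $\beta^{-1}(\mathcal C')$ containing $\tilde{\mathcal C}$ therefore strictly enlarges it inside $X_\mu^{P\subset G}(b)$, contradicting maximality. The same reasoning identifies $\beta_\Sigma^{-1}(\mathcal C)$ with $\Sigma(\beta^{-1}(\mathcal C))$.

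Proposition~\ref{prop56} now provides a transitive $J_b(F)\cap N(L)$-action on each fiber $\beta_\Sigma^{-1}(\mathcal C)$. Since $J_b(F)\cap N(L)$ projects trivially to $M$, it acts trivially on $X_\mu^{M\subset G}(b)$; combining this with the $J_b(F)\cap P(L)$-equivariance from Lemma~\ref{lem61} and the decomposition $J_b(F)\cap P(L)=(J_b(F)\cap M(L))\cdot(J_b(F)\cap N(L))$ yields the first bijection, surjectivity following just as in Lemma~\ref{lem61}. For the top-dimensional refinement, (\ref{fibdim2}) forces $\dim X_{\mu'}^M(b)+d_{\mu'}=\dim X_\mu^G(b)$ independently of $\mu'\in I_{\mu,b}$, so $\tilde{\mathcal C}$ is top-dimensional in $X_\mu^{P\subset G}(b)$ iff $\beta_\Sigma(\tilde{\mathcal C})$ is top-dimensional in its own summand, i.e.\ lies in $\Sigma'$. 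Equidimensionality is then immediate: preimages of any $\mathcal C\in\Sigma(X_{\mu'}^M(b))$ all have dimension $\dim\mathcal C+d_{\mu'}$, so $X_\mu^{P\subset G}(b)$ is equidimensional iff $\dim\mathcal C=\dim X_{\mu'}^M(b)$ for every such $\mathcal C$ and $\mu'$, i.e.\ iff every $X_{\mu'}^M(b)$ is equidimensional.

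The main obstacle is the well-definedness step: one must exclude that an irreducible component of $X_\mu^{P\subset G}(b)$ maps only onto a proper closed subset of some component of $X_{\mu'}^M(b)$. This requires a uniform dimension for components of $\beta^{-1}(\mathcal C')$ over an arbitrary irreducible closed $\mathcal C'$, which is supplied by a preliminary application of Proposition~\ref{prop56} before that same proposition is used again to produce the orbit-transitivity that drives the bijection itself.
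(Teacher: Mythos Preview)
Your argument is correct and follows exactly the line the paper has in mind: the paper states the corollary without proof, relying on the reader to combine Lemma~\ref{lem61}, the fibre-dimension formula~(\ref{fibdim2}), and Proposition~\ref{prop56} in precisely the way you spell out. Your extension of $\beta_\Sigma$ to all of $\Sigma(X_\mu^{P\subset G}(b))$, the identification $\beta_\Sigma^{-1}(\mathcal C)=\Sigma(\beta^{-1}(\mathcal C))$, and the passage to quotients via the decomposition $J_b(F)\cap P(L)=(J_b(F)\cap M(L))\cdot(J_b(F)\cap N(L))$ are all the right ingredients, and your handling of the well-definedness step (ruling out that a component of $X_\mu^{P\subset G}(b)$ maps into a proper closed subset of a component downstairs) via a dimension comparison forced by Proposition~\ref{prop56} is exactly the detail the paper suppresses.
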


We use the following notation. Let $R$ be an integral $k$-algebra. In the arithmetic case we assume $R$ to be perfect and let $\mathcal R=W_{\Ocal_F}(R)$. In the function field case, let $\mathcal R=R\pot{\epsilon}$. In both cases let $\mathcal R_L=\mathcal R[1/\epsilon]$.

For $m\in M(\mathcal{R}_L)$ consider the map
\begin{eqnarray*}
f_m\colon  LN_{R}&\rightarrow& LN_R\\
n&\mapsto &(m^{-1}n^{-1}m)\sigma(n).
\end{eqnarray*}

\begin{lemma}[Chen, Kisin, Viehmann]\label{lemlift}
Let $b\in [b]\cap M(L)$ with $b\sigma(b)\dotsm \sigma^{l_0-1}(b)=\epsilon^{l_0\nu_b}$ for some $l_0>0$ such that $l_0\nu_b\in X_*(T)$. Let $R$ be an integral $k$-algebra, $\mathcal R, \mathcal{R}_L$ as above and $y\in N(\mathcal{R}_L)$ contained in a bounded subscheme. Let further $x_1\in \Spec R(k)$ and $z_1\in N(L)$ with $f_b(z_1)=y(x_1)$. Then for any bounded open subgroup $K'\subset N(L)$ there exists a finite \'etale covering $R\rightarrow R'$ with associated $\mathcal R\rightarrow\mathcal R'$ and $z\in N(\mathcal{R}'_L)$ such that 
\begin{enumerate}
\item for every $k$-valued point $x$ of $R'$ we have $f_b(z(x))K'=y(x)K'$
\item there exists a point $x_1'\in \Spec R'(k)$ over $x_1$ such that $z(x_1')=z_1$.
\end{enumerate}
\end{lemma}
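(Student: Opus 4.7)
The plan is to build $z$ by successive approximation through a $b$-stable filtration of $N$, using the identity $b\sigma(b)\cdots\sigma^{l_0-1}(b) = \epsilon^{l_0\nu_b}$ to ensure that $f_b$ is \emph{contracting} relative to this filtration. Each approximation step is refined by a finite \'etale base change, and openness of $K'$ combined with the boundedness of the subscheme containing $y$ forces the process to terminate after finitely many steps.

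First I would choose a filtration $N(L) = N_0 \supset N_1 \supset \cdots \supset N_r$ with $N_r \subset K'$, by $\Ad(b)$- and $\sigma$-stable bounded open subgroups of $N(L)$ which refine both the lower central series of $N$ and the $\epsilon$-adic filtration. Each successive quotient $V_j = N_j/N_{j+1}$ is then a vector group on which $f_b$ descends to an affine operator. The key quantitative input is that on each $V_j$, upon iterating $l_0$ times and using $b\sigma(b)\cdots\sigma^{l_0-1}(b) = \epsilon^{l_0\nu_b}$, the operator $f_b^{(l_0)}$ (in additive notation) takes the shape $\sigma^{l_0} - \Ad(\epsilon^{-l_0\nu_b})$. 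Since every root $\alpha$ of $T$ occurring on $\Lie(N)$ pairs strictly positively with $\nu_b$ (because $\nu_b$ is $G$-dominant and central in $M = \Cent_G(\nu_b)$, while $N$ is the unipotent radical of $P \supset M$), the correction term is divisible by a positive power of $\epsilon$ on each root subgroup, and modulo any fixed power of $\epsilon$ the operator reduces to $\sigma^{l_0}$ up to a nilpotent perturbation.

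Given $z^{(j)} \in N(\mathcal R^{(j)}_L)$ satisfying $f_b(z^{(j)}) \equiv y \pmod{N_j}$ (with $z^{(0)}$ an arbitrary bounded lift of $1$), I would write $z^{(j+1)} = z^{(j)} \cdot \delta_j$ with $\delta_j$ valued in $V_j$ and rewrite the refined congruence as a twisted Artin--Schreier equation
\[
 \sigma^{l_0}(\delta_j) - \Ad\bigl(\epsilon^{-l_0\nu_b}\bigr)(\delta_j) \equiv \eta_j \pmod{N_{j+1}}
\]
with $\eta_j \in V_j(\mathcal R^{(j)}_L)$ computed from $y$ and $z^{(j)}$. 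Because the leading symbol of this equation modulo $\epsilon$ is $\sigma^{l_0}$, which is bijective on $\mathcal R^{(j)}$ (perfectness of $R$ in the mixed-characteristic case, and the analogous statement in equal characteristic), the defining polynomial has invertible linearisation and the equation is solved after a finite \'etale extension $\mathcal R^{(j)} \to \mathcal R^{(j+1)}$. Iterating $r$ times produces $R'$ together with $z = z^{(r)}$ satisfying (1).

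To arrange (2), I would carry $z_1$ along the induction: the component of $z_1$ in $V_j$ is a distinguished root of the Artin--Schreier equation at $x_1$, and hence selects a $k$-point $x_1^{(j+1)} \in \Spec R^{(j+1)}(k)$ over $x_1^{(j)}$. After all steps we obtain $x_1' = x_1^{(r)}$ with $z(x_1') \equiv z_1 \pmod{K'}$, and a final harmless adjustment of $z$ by the constant element $z_1 z(x_1')^{-1} \in K'$ (which does not affect (1)) gives literal equality $z(x_1') = z_1$. The main obstacle is verifying \'etaleness of each approximation step in the mixed-characteristic setting: one must check that the defining polynomial has invertible linearisation modulo $\epsilon$, which reduces precisely to the positivity $\alpha(\nu_b) > 0$ for all roots $\alpha$ of $N$ combined with the invertibility of $\sigma$ on the perfect base $R$.
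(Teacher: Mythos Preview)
The paper does not give an independent argument: it simply cites \cite[Lemma~3.4.4]{CKV15} and observes that the extra hypotheses imposed there ($R$ smooth, mixed characteristic) play no role in that proof. Your outline is essentially a reconstruction of that argument --- filter $N$, linearise $f_b$ on the graded pieces, solve the resulting separable additive equations after finite \'etale base change, and carry the distinguished solution $z_1$ along --- so strategically you and the paper (via \cite{CKV15}) agree.

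There is, however, a genuine inaccuracy at the point you single out as ``the main obstacle''. You assert $M=\Cent_G(\nu_b)$ and deduce $\langle\alpha,\nu_b\rangle>0$ for every root $\alpha$ of $N$. In the setup of this section $M$ is a \emph{smallest} standard Levi in which $b$ becomes superbasic, and this can be strictly contained in $\Cent_G(\nu_b)$. For instance, if $G=\GL_4$ and $[b]$ is basic of constant Newton slope $\tfrac12$, then $\Cent_G(\nu_b)=G$ while $M=\GL_2\times\GL_2$, and every root of $N$ pairs to $0$ with $\nu_b$. On such root subspaces the $l_0$-fold linearised operator is not $\sigma^{l_0}$ ``up to a nilpotent perturbation'' but the Lang map $\sigma^{l_0}-\id$. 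This map is still separable (the additive polynomial $t^{q^{l_0}}-t$ has nonzero derivative), so your inductive step survives after a finite \'etale cover at each such stage; but the justification you give (``reduces precisely to the positivity $\alpha(\nu_b)>0$'') does not cover it. In fact it is exactly on these root spaces that the finite \'etale extensions in the lemma are genuinely needed: when $\langle\alpha,\nu_b\rangle>0$ the iterate $(\Ad(b)\circ\sigma)^{l_0}=\Ad(\epsilon^{l_0\nu_b})\circ\sigma^{l_0}$ is $\epsilon$-adically contracting on that root space and the fixed-point iteration already converges over $R$ itself.
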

\begin{proof}
This is \cite[Lemma 3.4.4]{CKV15}, except for the fact that in loc.~cit., $R$ is assumed to be smooth, and only the case of mixed characteristic is considered. But actually, none of these assumptions is needed in the proof given there.
\end{proof}
\begin{corollary}\label{corlift}
Let $b\in [b]\cap M(L)$, $R$ and $\mathcal R$ be as in the previous lemma. Let $m\in M(\mathcal{R}_L)$, and $y\in N(\mathcal{R}_L)$, each contained in a bounded subscheme. Let further $x_1\in \Spec R(k)$ and $z_1\in N(L)$ with $f_b(z_1)=y(x_1)$. Let $b_m=m^{-1}b\sigma(m)\in M(\mathcal{R}_L) $. Then for any bounded open subgroup $K'\subset N(L)$ there exists a finite \'etale covering $R\rightarrow R'$ with associated extension $\mathcal R\rightarrow\mathcal R'$ and $z\in N(\mathcal{R}'_L)$ such that 
\begin{enumerate}
\item for every $k$-valued point $x$ of $R'$ we have $f_{b_m}(z(x))K'=y(x)K'$
\item there exists a point $x_1'\in \Spec R'(k)$ over $x_1$ such that $z(x_1')=z_1$.
\end{enumerate}
\end{corollary}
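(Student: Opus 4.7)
The plan is to reduce Corollary~\ref{corlift} to Lemma~\ref{lemlift} by conjugating the entire setup by $m$. The key observation is that $M$ normalises $N$ (since $N$ is the unipotent radical of $P = M\cdot N$), so $\Ad(m)\colon n \mapsto m\,n\,m^{-1}$ defines an automorphism of $LN$, and a direct computation using $b_m = m^{-1}b\sigma(m)$ yields the identity
\[
\sigma(m)\cdot f_{b_m}(n)\cdot \sigma(m)^{-1} \;=\; f_b(m\,n\,m^{-1})
\]
for every $n\in LN$. Moreover, as $m$ lies in a bounded subscheme of $LM$, the automorphisms $\Ad(m)$ and $\Ad(\sigma(m))$ of $LN$ preserve boundedness.

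Given the data $(y,x_1,z_1)$ of the corollary, one sets $y' \coloneqq \sigma(m)\,y\,\sigma(m)^{-1} \in N(\mathcal R_L)$ and $z_1' \coloneqq m(x_1)\,z_1\,m(x_1)^{-1} \in N(L)$; both remain bounded, and evaluating the displayed identity at $x_1$ shows that $f_b(z_1') = y'(x_1)$, so the hypotheses of Lemma~\ref{lemlift} are satisfied for $(y',z_1')$. Before invoking it one fixes a bounded open subgroup $K'' \subseteq N(L)$ small enough that $\sigma(m(x))^{-1}\,K''\,\sigma(m(x)) \subseteq K'$ for every $k$-valued point $x$ of $\Spec R$. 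This is the one genuinely non-formal step, and is the main obstacle: it uses the same uniform continuity argument as \cite[Prop.~5.3.1]{GHKR06} (already invoked in the proof of Proposition~\ref{prop56}), relying on the fact that $\sigma(m)$ ranges over a bounded subscheme of $LM$ so that the conjugates $\Ad(\sigma(m))^{-1}(K'')$ shrink uniformly with $K''$.

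With such a $K''$ in hand, Lemma~\ref{lemlift} applied to $(y',z_1',K'')$ produces a finite \'etale cover $R \to R'$ and an element $z' \in N(\mathcal R'_L)$ with $f_b(z'(x))\,K'' = y'(x)\,K''$ for every $k$-valued point $x$ of $\Spec R'$ and $z'(x_1') = z_1'$ at some lift $x_1'$ of $x_1$. One then defines $z \coloneqq m^{-1}\,z'\,m \in N(\mathcal R'_L)$, and the displayed identity together with the choice of $K''$ gives
\[
f_{b_m}(z(x)) \;=\; \sigma(m(x))^{-1}\,f_b(z'(x))\,\sigma(m(x)) \;\in\; y(x)\cdot\sigma(m(x))^{-1}\,K''\,\sigma(m(x)) \;\subseteq\; y(x)\,K',
\]
which verifies condition (1), while $z(x_1') = m(x_1)^{-1}\,z_1'\,m(x_1) = z_1$ verifies (2). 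Everything except the choice of $K''$ is routine bookkeeping via the conjugation identity.
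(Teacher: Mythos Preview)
Your proof is correct and follows essentially the same approach as the paper's: both derive the conjugation identity $f_{b_m}(n)=\sigma(m)^{-1}f_b(mnm^{-1})\sigma(m)$, use boundedness of $m$ to choose a small $K''$ with $\sigma(m(x))^{-1}K''\sigma(m(x))\subseteq K'$ for all $x$, apply Lemma~\ref{lemlift} to $\sigma(m)\,y\,\sigma(m)^{-1}$ with $K''$, and conjugate the output back by $m^{-1}$. Your write-up is simply more explicit than the paper's (specifying $z_1'=m(x_1)z_1m(x_1)^{-1}$ and spelling out the verification of (1) and (2)), but the argument is the same.
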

\begin{proof}
For $n\in N(L)$ we have
\begin{eqnarray*}
f_{b_m}(n)&=&(\sigma(m)^{-1}b^{-1}m)n^{-1}(m^{-1}b\sigma(m))\sigma(n)\\
&=& \sigma(m)^{-1}b^{-1}(mn^{-1}m^{-1})b\sigma(mnm^{-1})\sigma(m)\\
&=&\sigma(m)^{-1}f_b(mnm^{-1})\sigma(m).
\end{eqnarray*}
By the boundedness assumption on $m$, there is a bounded open subgroup $K''$ such that $\sigma(m(x))^{-1}K''\sigma(m(x))\in K'$ for all $\overline k$-valued points $x$ of $\Spec R$. Applying Lemma \ref{lemlift} to $\sigma(m)y\sigma(m)^{-1}$ and $K''$, and conjugating the result by $m$, we obtain the desired lifting with respect to $f_{b_m}$.
\end{proof}
\begin{theorem} \label{thmnewmain}
 Let $\mu\in X_*(T)_{\dom}$ be minuscule, $b\in [b] \in B(G,\mu)$, and $\tilde\lambda\in X_\ast(T)$ an associated element. Then the map 
 \[
  \phi=\alpha_{\Sigma}\circ \beta_{\Sigma}^{-1}\colon   W.\mu \cap [\tilde\lambda+(1-\sigma)X_*(T)] \rightarrow J_b(F)\backslash \Sigma^\top(X_{\mu}(b))
 \]
constructed above is surjective and it is bijective if and only if $J_b(F)$ acts trivially on $(J_b(F) \cap P(L)) \backslash \Sigma^{\top}(X_\mu(b)).$
\end{theorem}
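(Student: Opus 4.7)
The plan is to show that the theorem is essentially a formal consequence of the results already assembled in the preceding sections; no new geometric input is needed.

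First I would verify the surjectivity and well-definedness of $\phi$. The identification
\[
 (J_b(F)\cap M(L))\backslash \Sigma'(X_\mu^{M\subset G}(b)) = W.\mu \cap [\tilde\lambda+(1-\sigma)X_\ast(T)]
\]
is Lemma~\ref{lemma sbunionthm}, and Corollary~\ref{cor prosurj2} says that $\beta_\Sigma$ induces a bijection
\[
 \beta_\Sigma\colon (J_b(F)\cap P(L))\backslash \Sigma^\top(X_\mu^{P\subset G}(b)) \isom (J_b(F)\cap M(L))\backslash \Sigma'(X_\mu^{M\subset G}(b)).
\]
Composing the inverse of this bijection with the surjection $\alpha_\Sigma$ of (\ref{surj1}) produces the desired surjective map $\phi$, which establishes the first assertion.

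For the bijectivity criterion I would reduce to $\alpha_\Sigma$: since $\beta_\Sigma^{-1}$ is already a bijection, $\phi$ is bijective precisely when $\alpha_\Sigma$ is. Using the natural bijection $\Sigma^\top(X_\mu^{P\subset G}(b)) \cong \Sigma^\top(X_\mu(b))$ noted just before (\ref{surj1}), the map $\alpha_\Sigma$ is identified with the canonical surjection
\[
 (J_b(F)\cap P(L))\backslash \Sigma^\top(X_\mu(b)) \twoheadrightarrow J_b(F) \backslash \Sigma^\top(X_\mu(b))
\]
coming from the inclusion of the subgroup $J_b(F)\cap P(L)$ in $J_b(F)$. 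For any subgroup $H \subset G$ acting on a set $X$, the quotient map $H\backslash X \to G\backslash X$ is a bijection if and only if every $G$-orbit is already an $H$-orbit, equivalently if and only if each $H$-orbit is stable under all of $G$, which is the precise meaning of saying that $G$ acts trivially on $H\backslash X$. Applying this with $H = J_b(F)\cap P(L)$, $G = J_b(F)$ and $X = \Sigma^\top(X_\mu(b))$ gives the desired equivalence.

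No genuine obstacle arises: the serious work has already been done in establishing Corollary~\ref{cor prosurj2} (the transitivity of $J_b(F)\cap N(L)$ on fibre components, Proposition~\ref{prop56}), the superbasic case via sweep maps (Theorem~\ref{thm cotype} and Proposition~\ref{prop superbasic}), and the definition of $\lambda_G(b)$. What remains is purely a tracking of orbit quotients through the zigzag $X_\mu(b) \leftarrow X_\mu^{P\subset G}(b) \to X_\mu^{M\subset G}(b)$, together with the elementary observation about when a canonical map $H\backslash X \to G\backslash X$ is bijective.
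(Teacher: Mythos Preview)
Your proposal is correct and follows essentially the same route as the paper's proof: both assemble Lemma~\ref{lemma sbunionthm}, Corollary~\ref{cor prosurj2}, and the surjection~\eqref{surj1} into the chain $W.\mu\cap[\tilde\lambda+(1-\sigma)X_*(T)] \cong (J_b(F)\cap P(L))\backslash\Sigma^{\top}(X_\mu^{P\subset G}(b)) \twoheadrightarrow J_b(F)\backslash\Sigma^{\top}(X_\mu(b))$, and then use $\Sigma^{\top}(X_\mu^{P\subset G}(b))\cong\Sigma^{\top}(X_\mu(b))$ to read off the bijectivity criterion. Your write-up simply makes explicit the elementary orbit argument that the paper compresses into a single sentence.
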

\begin{proof}
From Lemma \ref{lemma sbunionthm}, Corollary~\ref{cor prosurj2}, and \eqref{surj1} we obtain the claimed maps
 \begin{eqnarray*}
   W.\mu\cap [\tilde\lambda+(1-\sigma)X_*(T)] &=& (J_b(F)\cap M(L))\backslash \Sigma(X_\mu^{M\subset G}(b))  \\
   &\overset{\beta_\Sigma^{-1}}{\rightarrow} & (J_b(F) \cap P(L)) \backslash \Sigma^{\rm top}(X_\mu^{P\subset G}(b)) \\
   &\overset{\alpha_\Sigma}{\twoheadrightarrow}& J_b(F) \backslash \Sigma^{\top}(X_\mu(b)).
  \end{eqnarray*}
As $\Sigma^{\top}(X_\mu(b))\cong \Sigma^{\rm top}(X_\mu^{P\subset G}(b))$, this description also implies the assertion about bijectivity.
\end{proof}

\begin{proof}[Proof of Theorem \ref{thm_main}]
The first assertion is a direct consequence of the previous theorem.

If $G$ is split, then $W.\mu\cap [\tilde\lambda+(1-\sigma)X_*(T)]=\{\tilde\lambda\}$ has only one element, hence the map is also injective.

If the second condition holds, then $J_b(F)\subset P(L)$, hence $\alpha_{\Sigma}$ and also $\phi$ are bijective.
\end{proof}

\end{document}